\theoremstyle{plain}
\newtheorem{thm}{Theorem} 
\newtheorem{lem}[thm]{Lemma}
\newtheorem{prop}[thm]{Proposition}
\newtheorem{cor}[thm]{Corollary}
\newtheorem{rem}[thm]{Remark}
\providecommand{\ind}{\mathds{1}} 
\providecommand{\les}{\lesssim}
\providecommand{\sm}{\setminus}
\providecommand{\N}{\mathbb{N}}
\providecommand{\R}{\mathbb{R}}
\providecommand{\C}{\mathbb{C}}
\providecommand{\eps}{\varepsilon}
\providecommand{\ov}{\overline}
\DeclareMathOperator{\supp}{supp}
\renewcommand{\qed}{\hfill $\Box$}
\begin{document}

%\input{Changes}
%\newpage

\allowdisplaybreaks

\title[Restriction-Extension operator with symmetries]{The Restriction-Extension Operator 
on Lebesgue spaces with symmetries and applications to the Limiting Absorption Principle}

\author{Rainer Mandel}%\textsuperscript{1}}
%\address{\textsuperscript{1}Karlsruhe
%Institute of Technology, Institute for Analysis, Englerstra{\ss}e 2, 76131 Karlsruhe, Germany}
\email{Rainer.Mandel@gmx.de}

\subjclass[2020]{35J15,42B20,42B37,46B70} 
\keywords{Restriction-Extension operator, block-radial functions, Limiting Absorption Principles, Oscillatory
Integrals}

\begin{abstract}
  We prove $L^p$-$L^q$-estimates for the Restriction-Extension operator acting on block-radial functions 
  with the aid of new oscillatory integral estimates and interpolation results in mixed Lorentz spaces. 
  We apply this to the Limiting Absorption Principle for elliptic (pseudo-)differential operators
  with constant coefficients. In this way we obtain a richer existence theory for
  Helmholtz-type problems on $\R^d$ with block-radial right hand sides.
\end{abstract}

\maketitle
\allowdisplaybreaks

\section{Introduction}

In this paper we are interested in new $L^p$-$L^q$-bounds for the Restriction-Extension operator
 $$
   Tf(x) 
   := \mathcal F^{-1}(\hat f\,d\sigma)(x)
   :=  (2\pi)^{-\frac{d}{2}} \int_{\mathbb{S}^{d-1}} \hat f(\omega) e^{ix\cdot\omega}\,d\sigma(\omega)
$$ 
%$Tf:= \mathcal F^{-1}(\hat f\,d\sigma)$
of the unit sphere $\mathbb{S}^{d-1}\subset \R^d$. Here, $\mathcal F f=\hat f$ denotes the Fourier transform of
$f$ and  $\sigma$ is the canonical surface measure on the sphere. It is known that $T:L^p(\R^d)\to L^q(\R^d)$
is bounded if and only if the exponents $p,q\in [1,\infty]$ satisfy 
$$
  \min\left\{\frac{1}{p},\frac{1}{q'}\right\} > \frac{d+1}{2d},\qquad 
  \frac{1}{p}-\frac{1}{q} \geq \frac{2}{d+1}.
$$
The first condition is seen to be necessary by choosing any Schwartz function $f:\R^d\to\R$ such
that $\hat f \equiv 1$ on $\mathbb{S}^{d-1}$. Indeed, well-known properties of Bessel functions then  
imply $Tf\in L^q(\R^d)$ if and only if $\frac{1}{q'}>\frac{d+1}{2d}$, see
\eqref{eq:defJ},\eqref{eq:BesselJAsymptotics} below. Since $T$ is symmetric, $\frac{1}{p}>\frac{d+1}{2d}$
must hold, too. The necessity of the second condition   follows from the optimality of the
Stein-Tomas inequality
\begin{equation} \label{eq:STclassical}
  \int_{\mathbb{S}^{d-1}} |\hat f|^2\,d\sigma \leq C \|f\|_p^2 \qquad\text{where }f\in L^p(\R^d),\;1\leq p\leq
  \frac{2(d+1)}{d+3} 
\end{equation}
in view of the Knapp example. We shall review that argument later.
It is natural to wonder about larger ranges of exponents under more restrictive conditions on the
functions. Our aim is to analyze the effect of additional symmetry assumptions. The simplest case is given by
radial symmetry where $f(x)=f_0(|x|)$. In that case $\hat f$ is again radially symmetric on $\R^d$ and in particular constant on
$\mathbb{S}^{d-1}$. It is straightforward to show that the Stein-Tomas inequality  holds for radial
functions if and only if $\frac{1}{p}>\frac{d+1}{2d}$ and $T:L^p_{\text{rad}}(\R^d)\to L^q_{\text{rad}}(\R^d)$ is bounded if and only
if $\min\{\frac{1}{p},\frac{1}{q'}\} > \frac{d+1}{2d}$. The guiding question of this article is: what happens
between the nonsymmetric and the radially symmetric case?
 
\medskip
 
To shed some light on this issue we provide a thorough analysis for the special symmetry groups
$G_k:=O(d-k)\times O(k)\subset O(d)$ where $k\in\{1,\ldots,d-1\}$. In~\cite{ManDOS} it was shown
that the Stein-Tomas Inequality holds in the larger range
$1\leq p\leq \frac{2(d+m)}{d+m+2}$ with $m:=\min\{d-k,k\}$ provided that $f\in L^p(\R^d)$ is $G_k$-symmetric,
i.e., $f\in L_{G_k}^p(\R^d)$. So it is natural to ask whether the Restriction-Extension operator also satisfies better
bounds, which we answer affirmatively in this paper. Our first main result reads as follows:

\begin{thm} \label{thm:RestrictionExtension} 
  Assume $d\in \N,d\geq 2$ and $k\in\{1,\ldots,d-1\}$. Then $T:L^p_{G_k}(\R^d)\to L^q_{G_k}(\R^d)$ is bounded
  if and only if $p,q\in [1,\infty]$ satisfy, for  $m:=\min\{d-k,k\}$,
  \begin{equation} \label{eq:pqconditions0}
    \min\left\{\frac{1}{p},\frac{1}{q'}\right\} > \frac{d+1}{2d},\qquad 
    \frac{1}{p}-\frac{1}{q} \geq  \frac{2}{d+m}.
  \end{equation}  
\end{thm}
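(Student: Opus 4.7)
For the necessity part, I would handle the two conditions separately. The first, $\min\{1/p,1/q'\}>(d+1)/(2d)$, is already forced in the radial case: taking a radial Schwartz $f$ with $\hat f\equiv 1$ on $\mathbb{S}^{d-1}$ yields $Tf(x)=c\,J_{(d-2)/2}(|x|)|x|^{-(d-2)/2}$ with decay $|Tf(x)|\sim |x|^{-(d-1)/2}$, placing $Tf$ in $L^q(\R^d)$ precisely when $1/q'>(d+1)/(2d)$; the $1/p$-condition then follows by symmetry of $T$. For the scaling condition $1/p-1/q\geq 2/(d+m)$, the plan is to reuse the $G_k$-Knapp example from \cite{ManDOS} that witnesses sharpness of the $G_k$-Stein-Tomas endpoint $p_0=2(d+m)/(d+m+2)$: via the $\mathcal T\mathcal T^*$ identity, the same example produces $\|Tf_\delta\|_q/\|f_\delta\|_p\to\infty$ as $\delta\to 0$ whenever $1/p-1/q<2/(d+m)$.

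For sufficiency, my strategy is to establish a small number of endpoint estimates and interpolate. Because $T$ is symmetric, the region \eqref{eq:pqconditions0} is invariant under $(1/p,1/q)\mapsto(1/q',1/p')$, so duality cuts the work in half. Two families of endpoints are immediate. First, the diagonal endpoint $T:L^{p_0}_{G_k}(\R^d)\to L^{p_0'}_{G_k}(\R^d)$ on the line $1/p-1/q=2/(d+m)$, obtained from the $G_k$-Stein-Tomas inequality of \cite{ManDOS} by writing $T=\mathcal E\mathcal R$ and using the standard $\mathcal T\mathcal T^*$ argument. Second, the convolution kernel of $T$ is the Bessel expression $K(x)=c\,J_{(d-2)/2}(|x|)|x|^{-(d-2)/2}\in L^{2d/(d-1),\infty}(\R^d)$, so the weak Young inequality produces $T:L^p\to L^q$ along the line $1/p-1/q=(d+1)/(2d)$, including the trivial corner $T:L^1\to L^\infty$. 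These two families lie on parallel lines in the $(1/p,1/q)$-plane which are distinct since $2/(d+m)<(d+1)/(2d)$ throughout the allowed range of parameters; direct Riesz–Thorin between them therefore only sweeps a triangle, not the full pentagonal region \eqref{eq:pqconditions0}. To cover the remaining interior I would appeal to the refined oscillatory integral estimates for the $G_k$-averaged Bessel kernel developed elsewhere in the paper, which supply additional off-diagonal endpoints, and perform the final interpolation in the mixed Lorentz scale on $\R^{d-k}\times\R^k$ adapted to the block structure.

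The main obstacle will be precisely this production and interpolation of intermediate off-diagonal bounds. Bare kernel convolution estimates do not exploit the $G_k$-symmetry, while $\mathcal T\mathcal T^*$ alone is pinned to the Stein–Tomas diagonal, so neither device on its own reaches the far corners of the pentagon. The technical heart of the argument therefore lies in stationary-phase analysis of the kernel after averaging over the $G_k$-action, combined with an interpolation scheme flexible enough to decouple the block variables $x_1\in\R^{d-k}$ and $x_2\in\R^k$ — which is exactly the role played by the mixed Lorentz spaces announced in the abstract.
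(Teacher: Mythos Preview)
Your necessity argument matches the paper's: the radial test function forces $1/q'>(d+1)/(2d)$, and the $\mathcal T\mathcal T^*$ reduction to the $G_k$--Stein--Tomas sharpness threshold gives $1/p-1/q\ge 2/(d+m)$. For sufficiency you also correctly identify the easy ingredients (weak-Young at the corners $A,B,B'$ and the self-dual Stein--Tomas point on the critical line) and correctly diagnose that these alone cover only a triangle rather than the full pentagon.

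The genuine gap is in how you propose to fill in the rest. You say the oscillatory integral estimates on the $G_k$-averaged kernel ``supply additional off-diagonal endpoints'' for $T$, which you would then interpolate. That is not how the argument works, and attempting it directly fails. The kernel estimates of Proposition~\ref{prop:KjBound} are only available \emph{after} a dyadic localisation $T=T_0+\sum_{j\ge 1}T_j$; on each annulus of scale $2^j$ the phase is normalised and one obtains the pointwise bound~\eqref{eq:est_Kj}, whence $\|T_j\|_{L^{2m/(m+1),1}\to L^{2m/(m-1),\infty}}\les 2^{j(m-d)/(2m)}$. Crucially, the Stein--Tomas input is \emph{also} used at the level of the pieces, giving $\|T_j\|_{L^{p_{ST}}\to L^2}\les 2^{j/2}$ --- a \emph{growing} bound, not a bound on $T$ itself. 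The two families of dyadic estimates, one decaying and one growing, are then combined via Bourgain's summation trick~\cite[Appendix]{CarberySeegerETC1999} at the unique interpolation parameter $\theta=m/d$ where the exponents cancel; this is what manufactures the restricted weak-type estimates at the off-diagonal corners $D,D'$ in~\eqref{eq:endpointestimates}. Without the dyadic decomposition and Bourgain's method you have no mechanism to reach $D,D'$: the single Stein--Tomas point $(1/p_{ST},1/p_{ST}')$ together with $A,B,B'$ still leaves the two ``wings'' of the pentagon uncovered, and no amount of Riesz--Thorin or real interpolation between bounds on $T$ alone will produce them. The mixed Lorentz machinery you allude to is indeed needed, but only in the exceptional case $m=d/2$, and again only to interpret the output of Bourgain's scheme; it does not replace it.
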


% In our first main
% result we show that $T:L^p_{G_k}(\R^d)\to L^q_{G_k}(\R^d)$ is bounded for all exponents $p,q\in [1,\infty]$ satisfying
% 
%   \min\left\{\frac{1}{p},\frac{1}{q'}\right\} > \frac{d+1}{2d},\qquad 
%   \frac{1}{p}-\frac{1}{q} \geq \frac{2}{d+m},
% 
% see Theorem~\ref{thm:RestrictionExtension} further below.
The improvement with respect to the general nonsymmetric situation is illustrated in the following Riesz
diagram.  %\r{Optimality}
  
\begin{figure}[htbp]  
%% d=4,m=2
\begin{tikzpicture}[scale=12]
\draw[->] (0.5,0) -- (1.05,0) node[right]{$\frac{1}{p}$};
\draw[->] (0.5,0) -- (0.5,0.45) node[above]{$\frac{1}{q}$};
\draw (1,0.5) -- (1,0) node[below]{$1$};
\coordinate (E) at (1,0);
\coordinate (ST) at (0.66,0.33); 
\coordinate (I) at (0.625,0.292);
\coordinate (Ib) at (0.625,0);
\coordinate (I') at (0.708,0.375); 
%% \frac{2}{3} = \frac{k+2}{2(k+1)}, \frac{1}{6} = \frac{k^2}{2(k+1)(k+2)}
\coordinate (I'b) at (1,0.375);
\draw [dotted] (0.5,0.375) node[left]{$\frac{d-1}{2d}$} -- (1,0.375) node[above left]{$B$};
\draw [dotted] (0.625,0) node[below]{$\frac{d+1}{2d}$} -- (0.625,0.42);
% \fill[color = blue] (Ib) -- (I) -- (ST) -- (I') -- (I'b) -- (E) -- cycle;
% \fill[color = yellow] (Ib) -- (0.625,0.225) -- (0.775,0.375) -- (I'b) -- (E) -- cycle;
% \draw [color=violet, dotted, line width = 0.5mm] (0.625,0.225) -- (I);
% \draw [color=red, dotted, line width = 0.5mm] (Ib) -- (0.625,0.225);
% \draw [color=violet, dotted. line width = 0.5mm] (I') -- (0.775,0.375);
% \draw [color=red, dotted, line width = 0.5mm] (0.775,0.375) -- (I'b);
\draw (Ib) node[above left]{$B'$};
\draw (E) node[above left]{$A$};
\draw (0.5,0) node[below]{$\frac{1}{2}$};
\draw [line width = 0.5mm] (I'b) -- (I')  node[above]{$D$} -- (I)  node[left]{$D'$} -- (Ib); 
\draw [line width = 0.5mm] (E) --  (I'b) -- (0.775,0.375) node[above]{$C$} -- (0.625,0.225)
node[left]{$C'$} -- (Ib) -- (E);
\end{tikzpicture}
 \caption{Riesz diagram for the case $d=4$ and $k=m=2$: 
 The pentagon ABCC'B', with the closed segments $BC,C'B'$ excluded, contains all
 exponents such that $T:L^p(\R^d)\to L^q(\R^d)$ is bounded. 
 By Theorem~\ref{thm:RestrictionExtension} the larger pentagon ABDD'B', 
 again with the closed segments $BD,D'B'$ excluded,
 contains all exponents such that $T:L^p_{G_k}(\R^d)\to L^q_{G_k}(\R^d)$ is bounded. On
 the horizontal (resp. vertical) closed segments BC,BD (resp. C'B',D'B') the corresponding
 statements hold  with $L^q$ (resp. $L^p$) replaced by $L^{q,\infty}$ (resp. $L^{p,1}$). 
 }
 \label{fig:Riesz}
\end{figure}

\medskip

The Restriction-Extension operator for the sphere $T$ is closely related to the Limiting Absorption Principle
for the Helmholtz equation. Here the task is to find ``physical'' solutions of $-\Delta u
-  u = f$ in $\R^d$ by making sense of   
\begin{equation} \label{eq:def_uf}
  u_f := (-\Delta-1+i0)^{-1}f := \lim_{\eps\to 0^+} \mathcal F^{-1}\left(\frac{\hat
  f}{|\cdot|^2-1+i\eps}\right)
\end{equation} 
in some suitable  topology. It is   well-known   that the imaginary part
of $u_f$  equals $Tf$ up to some multiplicative constant, see for instance
\cite[Corollary~2.5]{EveqWeth2015} or formula (4.7) in~\cite{Agmon1975}. Given this it is not
surprising that the operator $(-\Delta-1+i0)^{-1}$ has similar mapping properties. 
Kenig, Ruiz and Sogge \cite{KenigRuizSogge1987} and Guti\'{e}rrez~\cite{Gutierrez2004} proved the 
$L^p(\R^d)$-$L^q(\R^d)$-boundedness
of $f\mapsto u_f$ assuming~\eqref{eq:pqconditions0} as well as $\frac{1}{p}-\frac{1}{q}\leq
\frac{2}{d}$, where the latter assumption is needed to control large frequencies. We prove the analogous statement in the
$G_k$-symmetric setting and extend the analysis to a reasonable  class of (pseudo-)differential
operators $P(|D|)$ with constant coefficients. For simplicity, we first state
the result for $P(|D|)=|D|^s-1 = (-\Delta)^{s/2}-1$ and refer to Theorem~\ref{thm:resolvent2} for a 
straightforward generalization to more general symbols.

% \medskip
% 
% It would be interesting to explore further conseqeunces and extensions. 
% The Restriction-Extension operator is embedded into the family of Bochner-Riesz-operators
% $$
%   f \mapsto \frac{1}{\Gamma(\alpha+1)} \mathcal F^{-1}\left( (1-|\cdot|^2)_+^\alpha \hat f\right)
% $$
% that are a priori defined for $\alpha>-1$ on Schwartz functions and extended to the range $-1\geq
% \alpha>-\frac{d+1}{2}$ via analytic continuation~\cite{ChoKimLeeShim2005}. The Restriction-Extension operator
% shows up for $\alpha=-1$, so it the effect of
% symmetries on the other operators with different $\alpha$ remains to be explored.
% % We believe that complex interpolation with respect to $\alpha$
% %as in \cite[p.380-386]{Stein1993} could be a reasonable approach.  
% This question may equally be considered for general $G_k$-symmetric manifolds
% different from the sphere and possibly with some vanishing principal curvatures as
% in~\cite{MandelSchippa2022}. Moreover, the effect of other symmetries are of mathematical interest.  

% \r{ \cite{WethYesil2021} 
% LenzWeth \cite{LenzmannWeth}, Goldberg \cite{Goldberg_Helmholtz}, Lenz \cite{LenzmannSok}}

\begin{thm}\label{thm:resolvent}
  Assume $d\in\N, d\geq 2, k\in\{1,\ldots,d-1\}$ and $s>0$. Then $(|D|^s-1+i0)^{-1}:L^p_{G_k}(\R^d)\to
  L^q_{G_k}(\R^d)$ is a bounded linear operator provided that $p,q\in [1,\infty]$ satisfy 
  \begin{equation} \label{eq:resolvent_conditions}
    \min\left\{\frac{1}{p},\frac{1}{q'}\right\}>\frac{d+1}{2d},\quad 
    \frac{2}{d+m}\leq  \frac{1}{p}-\frac{1}{q}\leq \frac{s}{d},\quad 
    \left(\frac{1}{p},\frac{1}{q}\right)\notin 
    \left\{ \left(1,\frac{d-s}{d}\right),\left(\frac{s}{d},0\right)\right\}.
  \end{equation}
\end{thm}

\medskip
%The application to Helmholtz-type equations is rather immediate.

\begin{cor} \label{cor:LAP}
  Assume $d\in\N, d\geq 2, k\in\{1,\ldots,d-1\},s>0$ and $f\in L^p_{G_k}(\R^d)$. Then the equation
  $(-\Delta)^{s/2}u-u=f$ in $\R^d$ admits the solution $u_f\in L^q_{G_k}(\R^d)$ obtained via the Limiting
  Absorption principle \eqref{eq:def_uf} provided that $p,q\in [1,\infty]$
  satisfy~\eqref{eq:resolvent_conditions}.
  %This solution
  %belongs to $W^{t,q_t}(\R^d)$ for all $t\in [0,s)$ with $q_t$ as in \eqref{eq:SobolevConditions}. 
\end{cor}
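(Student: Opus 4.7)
The plan is to deduce the corollary directly from Theorem~\ref{thm:resolvent} via a density argument. By the theorem, the operator $R := (|D|^s-1+i0)^{-1}$ is a bounded linear map from $L^p_{G_k}(\R^d)$ into $L^q_{G_k}(\R^d;\C)$ under the stated assumptions on $p,q$, so setting $u_f := Rf$ immediately gives a candidate of the required regularity and in the right symmetry class. The $G_k$-invariance is automatic because the Fourier multiplier $(|\xi|^s-1+i\eps)^{-1}$ depends only on $|\xi|$ and hence commutes with every rotation, in particular with the $G_k$-action.

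The only remaining content is to verify that this $u_f$ solves $((-\Delta)^{s/2}-1)u=f$ in the distributional sense. First I would treat Schwartz data $f\in\mathcal{S}(\R^d)\cap L^p_{G_k}(\R^d)$: for such $f$ and $\eps>0$ the tempered distribution $u_{f,\eps}:=\mathcal{F}^{-1}((|\cdot|^s-1+i\eps)^{-1}\hat f)$ is well-defined and satisfies $((-\Delta)^{s/2}-1+i\eps)u_{f,\eps}=f$ directly on the Fourier side; the distributional limit as $\eps\to 0^+$ then yields the PDE for $u_f$ in $\mathcal{S}'(\R^d)$. For general $f\in L^p_{G_k}(\R^d)$ I would approximate $f$ by a sequence $f_n\in\mathcal{S}(\R^d)\cap L^p_{G_k}(\R^d)$, obtained from standard density of $\mathcal{S}(\R^d)$ in $L^p(\R^d)$ for $1\leq p<\infty$ followed by averaging over the compact group $G_k$ (which preserves both Schwartz regularity and the $L^p$-approximation since $f$ is already $G_k$-invariant); the endpoint $p=\infty$ is excluded by the strict inequality $\frac{1}{p}>\frac{d+1}{2d}$. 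The boundedness of $R$ then gives $u_{f_n}\to u_f$ in $L^q_{G_k}(\R^d;\C)$, and passing to the distributional limit in $((-\Delta)^{s/2}-1)u_{f_n}=f_n$ yields the claim for $u_f$.

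The only slightly delicate point is the interchange of the two limits $n\to\infty$ and $\eps\to 0^+$ implicit in \eqref{eq:def_uf}, but this is forced by the uniform operator bound: writing $u_f - u_{f_n,\eps} = R(f-f_n)+(u_{f_n}-u_{f_n,\eps})$, the first term is controlled by $\|f-f_n\|_p$ via Theorem~\ref{thm:resolvent} and the second by the Schwartz case. Hence $u_{f,\eps}\to u_f$ in $L^q_{G_k}$ as $\eps\to 0^+$ in the sense of \eqref{eq:def_uf}, completing the proof.
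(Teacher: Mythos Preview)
Your argument is correct and matches the paper's approach: the paper offers no proof at all beyond the remark ``The application to Helmholtz-type equations is direct,'' so Corollary~\ref{cor:LAP} is meant to be read as an immediate consequence of the boundedness statement in Theorem~\ref{thm:resolvent}. Your density argument and the check that the limit solves the PDE distributionally are exactly the routine details the paper is suppressing.
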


\medskip

\begin{rem} ~
  \begin{itemize} 
    \item[(a)] Exploiting Corollary~\ref{cor:LAP} amd well-known Bessel potential
    estimates one can actually prove the more general statement $u_f\in W^{\tilde s,q}(\R^d),\tilde s\geq 0$
    whenever~\eqref{eq:resolvent_conditions} holds for $s-\tilde s$ instead of $s$.  
    \item[(b)] 
    %For applications of such results to nonlinear Helmholtz equations see  
    %\cite{Gutierrez2004,EveqWeth2015,Man_Uncountably}. 
    Given the results in \cite{EveqWeth2015, ManMonPel2017} it is straightforward to
    show that Corollary~\ref{cor:LAP} allows to prove the existence of nontrivial $L^p(\R^d)$-solutions to
    the nonlinear problems of the prototypical form 
    $$
      (-\Delta)^{s/2}u  - u = \sigma |u|^{p-2}u \quad\text{in }\R^d    
    $$
    using dual variational methods. Here, $\sigma\in\R\sm\{0\}$ and, for $q:=p'=\frac{p}{p-1}$, the exponent
    $(p,q)$ lies in the interior of the set given by~\eqref{eq:resolvent_conditions}.  
  \end{itemize}
\end{rem}

\subsection{Proof idea}

We first recapitulate the proof of the optimal estimates for the Restriction-Extension operator in the
nonsymmetric setting in order to explain the difficulties that we have to overcome in our analysis. As
we recall below in Proposition~\ref{prop:AsymptoticsJ}, $T$ is a convolution operator with an explicitly known,
radially symmetric and oscillatory kernel function $\mathcal J(|\cdot|)$ that we define in~\eqref{eq:defJ}, i.e.,
$Tf=\mathcal J(|\cdot|)\ast f$. The pointwise bounds for $\mathcal J$ and resulting integrability
properties are, however, not sufficient to derive the optimal mapping properties for $T$. To take the
oscillatory nature into account, the operator is splitted dyadically according to $T= T_0 + \sum_{j=1}^\infty
T_j$ where $T_0$ is a harmless bounded linear operator of convolution type and $T_j f =
(\chi(2^{-j}|\cdot|)\mathcal J(|\cdot|))\ast f$ for $j\in\N$. The cut-off function $\chi$ is needed to localize the kernel function $\mathcal J$ inside an annulus with inner
and outer radius both comparable to $2^j$. The pointwise bounds for $\mathcal J$ and Young's Convolution
Inequality then imply 
\begin{equation}\label{eq:L1Linfty}
  \|T_j f\|_\infty \les 2^{j\frac{1-d}{2}} \|f\|_1
\end{equation} 
after one line of computations. Moreover, the Stein-Tomas Theorem yields the estimate  
$$
  \|T_j f\|_2 \les 2^{j\frac{1}{2}} \|f\|_{\frac{2(d+1)}{d+3}}.
$$ 
Applying   Bourgain's interpolation
method~\cite[Appendix]{CarberySeegerETC1999} one finds restricted weak-type estimates $\|Tf\|_{q,\infty}\les
\|f\|_{p,1}$ in the corners $B,C,C',B'$ and real interpolation theory allows to conclude. We refer to the
proof of Theorem~6 in \cite{Gutierrez2004} for the details.

\medskip

In the $G_k$-symmetric case new difficulties arise. Firstly, we  have to replace the bound induced the
classical Stein-Tomas Theorem by the corresponding $G_k$-symmetric version 
\begin{equation} \label{eq:strategyII}
  \|T_j f\|_2 \les 2^{j\frac{1}{2}} \|f\|_{\frac{2(d+m)}{d+m+2}},\quad
  m=\min\{k,d-k\}
\end{equation} 
for all $G_k$-symmetric functions in a rather straightforward manner. This relies on the
$G_k$-symmetric Stein-Tomas Theorem from~\cite{ManDOS} that we recall in~\eqref{eq:STnew} below. So this
crucial and non-trivial part of the proof may essentially be taken from the literature.
The main difficulty is then to prove a counterpart of \eqref{eq:L1Linfty} that leads to an optimal result in
the $G_k$-symmetric setting.
In Corollary~\ref{cor:Xpestimates} we will prove a Lorentz-space estimate
\begin{equation} \label{eq:strategyI}
  \|T_j f\|_{\frac{2m}{m-1},\infty} \les 2^{j\frac{m-d}{2m}} \|f\|_{\frac{2m}{m+1},1} \qquad 
  m\neq \frac{d}{2}.
\end{equation} 
%Neither bound can be proved with Young's Convolution Inequality.
% alone because they must fail for non-symmetric functions. 
The proof relies on a passage to polar coordinates (with two radial coordinates) and delicate pointwise
bounds for the kernel function in these coordinates, see~Proposition~\ref{prop:KjBound}. This
is based on oscillatory integral estimates that we defer to the Appendix
(Theorem~\ref{thm:OscillatoryIntegral}) due to their purely technical nature.
We shall have to switch to the more complicated setting of mixed Lorentz spaces in order to prove a 
counterpart of~\eqref{eq:strategyI} in the exceptional case $m=\frac{d}{2}$. 
Roughly speaking, this configuration is special and requires a separate analysis because  
$\frac{2m}{m+1}$ becomes an endpoint exponent  with respect to both the
$k$-dimensional and the $(d-k)-$dimensional variable,
%.
% More precisely, $m=k=d-k=\frac{d}{2}$ then satisfies
%$\frac{2m}{m+1}=\frac{2k}{2k+1}=\frac{2(d-k)}{d-k+1}$, 
see Lemma~\ref{lem:XpEstimates} for details.  
%The real interpolation theory for these
%spaces will be taken from~\cite{Mandel2023}.  
The final step is, in the case $m\neq \frac{d}{2}$, to combine the  estimates
\eqref{eq:strategyII},\eqref{eq:strategyI} via interpolation using   Bourgain's summation
method. This works out nicely in the case $m\neq \frac{d}{2}$, but 
an analogous interpolation scheme for $m=\frac{d}{2}$ requires more work given that 
real interpolation of estimates between mixed Lorentz spaces is needed. To solve this subtle
problem we use recent results from real interpolation theory of mixed Lorentz
spaces~\cite[Corollary~1]{Mandel2023}. In this way we derive the $L^{p,1}-L^{q,\infty}$-estimates at the
points D' and D in Figure~\ref{fig:Riesz} that lie on the diagonal line $\frac{1}{p}-\frac{1}{q}=\frac{2}{d+m}$.
 The proof is then completed via interpolation with the easier bounds in the corners B',B,A.

\subsection{Notation}

In the following let always $d\in\N,d\geq 2$ denote the space dimension. We denote by 
$L^p_{G_k}(\R^d)$ the Banach space consisting of $G_k$-symmetric complex-valued functions that belong to
$L^p(\R^d)$ where $G_k=O(d-k)\times O(k)$. Accordingly, $\mathcal S_{G_k}(\R^d)$ consists of $G_k$-symmetric Schwartz functions. 
Functions $f\in L^p_{G_k}(\R^d)$ are block-radial and admit a profile function
$f_0$ such that $f(x)=f_0(|y|,|z|)$ for $x=(y,z)\in\R^{d-k}\times\R^k$. The norm is then given by   
$$
  \|f\|_p^p
  := \|f\|_{L^p(\R^d)}^p = c_{d,k} \int_0^\infty \rho_1^{d-k-1}\rho_2^{k-1}
  |f_0(\rho_1,\rho_2)|^p\,d\rho_1\,d\rho_2
$$
where $c_{d,k}:= |\mathbb{S}^{d-k-1}||\mathbb{S}^{k-1}|>0$. Here, $\mathbb{S}^{l-1}:=
\{z\in\R^l: |z|=1\}$ denotes the unit sphere in $\R^l$ with $(l-1)$-dimensional Hausdorff measure
$|\mathbb{S}^{l-1}|$, $\sigma$ represents the canonical surface measure on this sphere. The dependence on the
dimension will be clear from the context. We shall often write $m:= \min\{k,d-k\}$ and our focus lies
on the case $m\geq 2$ where our results truly improve the known ones. We will need the
 $G_k$-symmetric improvement of the Stein-Tomas inequality
\begin{equation} \label{eq:STnew}
  \int_{\mathbb S^{d-1}} |\hat f|^2\,d\sigma 
  \les \|f\|_{p}^2 
  \quad\text{for all }f\in L^{p}_{G_k}(\R^d) \text{ and } 1\leq p\leq p_{ST} := \frac{2(d+m)}{d+m+2} 
\end{equation}
from~\cite{ManDOS}.
Note that the $G_k$-symmetric threshold exponent $p_{ST}$ is larger than the 
usual threshold exponent $\frac{2(d+1)}{d+3}$ for the corresponding inequality without any symmetry
constraint, see~\eqref{eq:STclassical}.
Here and in the following, the symbol $\les$ stands for $\leq C$ for some positive number $C$ only depending
on the fixed parameters like $d,m,p$.  The Fourier transform 
$$
  \hat f(\xi) := \mathcal F f(\xi) := (2\pi)^{-\frac{d}{2}} \int_{\R^d} f(x) e^{-ix\cdot\xi}\,dx 
$$
preserves $G_k$-symmetry, and $f\in\mathcal S_{G_k}(\R^d)$ holds if and only if $\hat f \in\mathcal
S_{G_k}(\R^d)$.
We will write $\phi(|D|)u:= \mathcal F^{-1}(\phi(|\cdot|)\hat u)$. Define
\begin{align} \label{eq:defJ}
   \mathcal J(|x|)
   := \mathcal F^{-1}(1\,d\sigma)(x)
   := (2\pi)^{-\frac{d}{2}} \int_{\mathbb{S}^{d-1}} e^{-ix\cdot \omega}\,d\sigma(\omega)
   = c_d |x|^{\frac{2-d}{2}}J_{\frac{d-2}{2}}(|x|)
 \end{align}
where $c_d>0$ is a suitable dimensional constant. Known asymptotic expansions of the Bessel
functions of the first kind \cite[p.356]{Stein1993} imply, for any given $L\in\N_0$, 
\begin{equation}\label{eq:BesselJAsymptotics}
    \mathcal J(z) 
    = \sum_{l=0}^{L-1} |z|^{\frac{1-d}{2}-l}(\alpha_l e^{i|z|} + \ov{\alpha_l}e^{-i|z|}) 
     + O(|z|^{\frac{1-d}{2}-L})\quad\text{as }|z|\to\infty
\end{equation}
for some $\alpha_0,\ldots,\alpha_{L-1}\in \C\sm\{0\}$. We shall also use the decomposition  
  \begin{equation} \label{eq:decomposition}
    \mathcal J(s) = \mathcal J^1(s)+s^{\frac{1-d}{2}}\mathcal J^2(s)e^{is}+s^{\frac{1-d}{2}}\ov{\mathcal J^2(s)}e^{-is}
  \end{equation}
where $\mathcal J^1$ is smooth with compact support near the origin and $\mathcal J^2$ is smooth with
unbounded support not containing the origin such that $|(\mathcal J^2)^{(k)}(s)|\les s^{-k}$ as $|s|\to\infty$
for all $k\in\N_0$. In view of \eqref{eq:BesselJAsymptotics} we even have 
$\mathcal J^2(s)= \sum_{l=0}^{L-1} \alpha_l s^{-l} + O(|s|^{-L})$ as $|s|\to\infty$. For a proof
of~\eqref{eq:decomposition} see \cite[Proposition~6]{Mandel2020}.
% fact one may set $\mathcal J_1(z):= \r{const} A_d(z)z^{1-d}$ and $\mathcal
%J_2(z):=  \r{const} B_d(z)z^{\frac{1-d}{2}}$  where $A_d,B_d$ are taken from. %(For $k\neq 0$ one even has
% $\les s^{-k-1}$.)

\section{The $G_k$-symmetric Restriction-Extension operator for the sphere}
 
In this section we prove the boundedness of the Restriction-Extension operator $T:L^p_{G_k}(\R^d)\to
L^q_{G_k}(\R^d)$ for all $p,q\in [1,\infty]$ satisfying \eqref{eq:pqconditions0}. So the   goal is to
prove Theorem~\ref{thm:resolvent}. The general idea of the proof is similar to the one in the non-symmetric
case, but the required tools require new methods in Fourier Restriction Theory, Oscillatory Integral Theory and Real Interpolation Theory
for mixed Lorentz spaces that we developed in~\cite{Mandel2023}. 
We will focus on the case $m=\min\{k,d-k\}\geq 2$ given that the result for   
$m=1$ is   covered by~\cite[Theorem~6]{Gutierrez2004}. 
We start with a representation formula for the Restriction-Extension operator. It will be convenient to fix 
a smooth function $\tau:\R\to\R$ such that $\tau(r)=1$ for $r\in [\frac{3}{4},\frac{5}{4}]$  and
$\supp(\tau)\subset [\frac{1}{2},\frac{3}{2}]$.

\begin{prop} \label{prop:AsymptoticsJ}
  We have $Tf = \mathcal J(|\cdot|) \ast (\tau(|D|)f)$ for all $f\in\mathcal S_{G_k}(\R^d)$. 
%   For any given $L\in\N_0$,  
%   \begin{align*}
%     \mathcal J(z) 
%     = \sum_{l=0}^{L-1} |z|^{\frac{1-d}{2}-l} (\alpha_l e^{i|z|} + \ov{\alpha_l}e^{-i|z|}) 
%      + O(|z|^{\frac{1-d}{2}-L})\quad\text{as }|z|\to\infty.
%   \end{align*}
\end{prop}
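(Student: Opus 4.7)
The plan is to reduce the identity to a direct application of Fubini's theorem, exploiting the fact that $\tau\equiv 1$ on the unit sphere. The cut-off $\tau(|D|)$ plays no role in making the identity true: its only purpose is to make the right-hand side a classically convergent integral, which will be convenient in all later dyadic decompositions.

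First, since $\tau(r)=1$ for $r\in[3/4,5/4]$, we have $\tau(|\omega|)=1$ for every $\omega\in\mathbb S^{d-1}$. Hence on the sphere
$$
  \hat f(\omega) \;=\; \tau(|\omega|)\hat f(\omega) \;=\; \widehat{\tau(|D|)f}(\omega),
$$
so substituting into the defining integral of $T$ immediately gives $Tf=T(\tau(|D|)f)$. This is the step that makes the presence of $\tau$ harmless.

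Next, set $g:=\tau(|D|)f$. Because $\hat g=\tau(|\cdot|)\hat f$ is smooth with compact support inside the annulus $\{1/2\le|\xi|\le 3/2\}$, the function $g$ belongs to $\mathcal S_{G_k}(\R^d)$ ($G_k$-symmetry being preserved by the radial multiplier $\tau(|\cdot|)$). Then I expand $\hat g(\omega)$ by its integral definition inside
$$
  Tg(x) \;=\; (2\pi)^{-d/2}\int_{\mathbb S^{d-1}}\hat g(\omega)\,e^{ix\cdot\omega}\,d\sigma(\omega),
$$
and apply Fubini's theorem, which is legitimate because $g$ is Schwartz and the integrand in $\omega$ is bounded on the compact sphere.

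After interchanging the order of integration, the inner integral equals
$$
  \int_{\mathbb S^{d-1}} e^{i(x-y)\cdot\omega}\,d\sigma(\omega),
$$
which by the $\omega\mapsto-\omega$ invariance of $\sigma$ coincides with $\int_{\mathbb S^{d-1}} e^{-i(x-y)\cdot\omega}\,d\sigma(\omega)$ and therefore with $(2\pi)^{d/2}\mathcal J(|x-y|)$ by \eqref{eq:defJ}. Collecting the $(2\pi)^{\pm d/2}$ factors against those in the definitions of $T$ and $\mathcal J$ yields $Tg(x)=(\mathcal J(|\cdot|)\ast g)(x)$, which together with Step~1 is the claimed identity. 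There is no real obstacle: the only subtle point is the applicability of Fubini, which is guaranteed by the Schwartz decay of $g$, and this is precisely what the insertion of $\tau(|D|)$ buys us.
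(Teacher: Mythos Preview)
Your proof is correct and follows essentially the same route as the paper: both use $\tau\equiv 1$ on $\mathbb S^{d-1}$ to replace $\hat f$ by $\widehat{\tau(|D|)f}$ on the sphere, and then identify the resulting extension as a convolution with $\mathcal J(|\cdot|)$. The paper simply compresses your Fubini computation into a one-line appeal to the convolution identity $\mathcal F^{-1}(\hat g\,d\sigma)=\mathcal F^{-1}(1\,d\sigma)\ast g$.
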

\begin{proof}
  The   claim follows from   $\tau(1)=1$ and 
  $$
    \mathcal F^{-1}(\hat f\,d\sigma) 
    = \mathcal F^{-1}(\tau(|\cdot|)\hat f\,d\sigma)
    = \mathcal F^{-1}(1\,d\sigma)\ast (\tau(|D|)f) 
    \stackrel{\eqref{eq:defJ}}{=}  \mathcal J(|\cdot|)\ast (\tau(|D|)f).
  $$ 
  % The asymptotic expansion for $\mathcal J$ is a
  %consequence of~\eqref{eq:BesselJAsymptotics}.
\end{proof}

Next we exploit the asymptotic expansion of $\mathcal J$ at infinity from \eqref{eq:BesselJAsymptotics} in
order to split up the kernel function $\mathcal J(|\cdot|)$.
The parameter $L>\frac{d-1}{2}$ will remain fixed, so the remainder term in \eqref{eq:BesselJAsymptotics} is
bounded and integrable and its mapping properties are easily determined. The much more difficult task is to
uncover the optimal mapping properties of the slowly-decaying oscillatory parts of $\mathcal J$ where simple pointwise estimates are not sufficient. To achieve this we choose
cut-off functions $\chi,\chi_0\in C_0^\infty(\R)$ that satisfy 
\begin{equation} \label{eq:defChi}
  \supp(\chi)\subset [\frac{1}{2},2],\qquad
  \chi_0(z) + \sum_{j\geq 1} \chi(2^{-j}z)=1 \text{ on }[0,\infty],
\end{equation}
see~\cite[Lemma~6.1.7]{BerghLoefstroem1976}. So we have $Tf= T_0f + \sum_{j=1}^\infty T_j f$ where 
$T_0:L^p_{G_k}(\R^d)\to L^q_{G_k}(\R^d)$ is bounded for $1\leq p\leq q\leq \infty$ and, for $j\geq
1$,
\begin{equation}\label{eq:defPhij}
  T_jf := \Phi_j\ast (\tau(|D|)f)
  \quad\text{where }      
    \Phi_j(z) :=  \chi(2^{-j}|z|)\cdot  \sum_{l=0}^{L-1} 
    |z|^{\frac{1-d}{2}-l}(\alpha_l e^{i|z|} + \ov{\alpha_l}e^{-i|z|}). 
\end{equation}
Fourier restriction theory for $G_k$-symmetric functions from~\cite{ManDOS} gives the following.
   
\begin{lem}\label{lem:FourierEstimate}
  For all $f\in\mathcal S_{G_k}(\R^d)$  and $j\geq 1$
  $$
    \|T_j f\|_2 \les 2^{j\frac{1}{2}} \|f\|_{p_{ST}},\qquad
    \|T_j f\|_{p_{ST}'} \les 2^{j\frac{1}{2}} \|f\|_2. 
  $$
\end{lem}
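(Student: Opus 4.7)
The plan is to reduce the two inequalities to each other by duality and then to prove one of them via Plancherel, a polar decomposition, and the $G_k$-symmetric Stein-Tomas estimate~\eqref{eq:STnew}.

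\textbf{Step 1 (self-duality).} The kernel $\Phi_j$ in~\eqref{eq:defPhij} is real (each summand $\alpha_l e^{i|z|}+\overline{\alpha_l}e^{-i|z|}$ is real) and radial, and $\tau$ is real and even, so the convolution with $\Phi_j$ and the multiplier $\tau(|D|)$ commute and are both formally self-adjoint. Consequently $T_j^* = T_j$ on the Schwartz class, and the two displayed inequalities are Banach-space adjoints of each other; it is enough to prove the first.

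\textbf{Step 2 (Fourier localisation of $\Phi_j$).} Since $\mathcal{F}(\mathcal{J}(|\cdot|)) = d\sigma$, the Fourier transform of the annular truncation $\chi(2^{-j}|\cdot|)\mathcal{J}(|\cdot|)$, hence also of $\Phi_j$ (which differs from it by a remainder of large negative order, cf.~\eqref{eq:BesselJAsymptotics}), should be a smoothed version of $d\sigma$ concentrated in an annulus of thickness $\sim 2^{-j}$ about $|\xi|=1$ with height $\sim 2^j$. Using the Bessel-type formula for radial Fourier transforms together with the asymptotic expansion of $J_{(d-2)/2}$, the analysis reduces to integrals of the form $\int_0^\infty \chi(2^{-j}r) e^{ir(1\pm|\xi|)}\,dr = 2^j \hat\chi(\mp 2^j(1\pm|\xi|))$. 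The $+$ sign gives Schwartz decay in $2^j$ since $1+|\xi|\geq 1$, and the $-$ sign yields, for every $N\in\N$,
\begin{equation*}
|\hat\Phi_j(\xi)| \lesssim_N 2^j \bigl(1 + 2^j \bigl|\,|\xi|-1\bigr|\bigr)^{-N}, \qquad \xi\in\R^d.
\end{equation*}

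\textbf{Step 3 (Plancherel + Stein-Tomas + $\rho$-integration).} Put $g := \tau(|D|)f \in \mathcal{S}_{G_k}(\R^d)$ ($G_k$-symmetry is preserved by the radial multiplier), whose Fourier transform is supported in $\{\tfrac12 \leq |\xi| \leq \tfrac32\}$. Plancherel and polar coordinates give
\begin{equation*}
\|T_j f\|_2^2 = \int |\hat\Phi_j(\xi)|^2 |\hat g(\xi)|^2\,d\xi = \int_{1/2}^{3/2} |\hat\Phi_j(\rho)|^2 \Bigl(\int_{|\xi|=\rho} |\hat g(\xi)|^2\,d\sigma_\rho(\xi)\Bigr) d\rho.
\end{equation*}
A short scaling argument (dilation preserves $G_k$-symmetry) converts~\eqref{eq:STnew} into the uniform bound $\int_{|\xi|=\rho} |\hat g|^2\,d\sigma_\rho \lesssim \|g\|_{p_{ST}}^2 \lesssim \|f\|_{p_{ST}}^2$ for $\rho \in [\tfrac12,\tfrac32]$, the last step using the $L^{p_{ST}}$-boundedness of the Schwartz-symbol multiplier $\tau(|D|)$. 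Combining with Step~2 and the trivial estimate $\int 2^{2j}(1+2^j|\rho-1|)^{-2N}\,d\rho \lesssim 2^j$ gives $\|T_j f\|_2^2 \lesssim 2^j \|f\|_{p_{ST}}^2$, which is the first displayed inequality.

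\textbf{Main obstacle.} The only mildly technical ingredient is the pointwise localisation of $\hat\Phi_j$ in Step~2, but this is essentially folklore and reflects the fact that dyadic annular truncations of $\mathcal{F}^{-1}(d\sigma)$ Fourier-transform to smoothed approximations of $d\sigma$. The genuinely new oscillatory integral estimates and the mixed Lorentz interpolation announced in the strategy are required only for the substantially harder Lorentz bound~\eqref{eq:strategyI} on $T_j$, not for the present lemma.
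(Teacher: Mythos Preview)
Your proof is correct and follows essentially the same route as the paper: Plancherel, polar coordinates, the scaled $G_k$-symmetric Stein--Tomas estimate on each sphere of radius $\rho\in[\tfrac12,\tfrac32]$, and self-adjointness for the second inequality. The one difference is that your Step~2 (pointwise Fourier localisation of $\hat\Phi_j$) is not needed: since only $\int_{1/2}^{3/2} |\hat\Phi_j(\rho)|^2 \rho^{d-1}\,d\rho$ enters the argument, the paper simply bounds this by $\|\hat\Phi_j\|_2^2 = \|\Phi_j\|_2^2$ via Plancherel and then reads off $\|\Phi_j\|_2^2 \les 2^j$ from the trivial physical-space bound $|\Phi_j(z)| \les 2^{j(1-d)/2}\ind_{2^{j-1}\leq |z|\leq 2^{j+1}}$.
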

\begin{proof} 
  We mimick the proof of~\cite[Theorem~6]{Gutierrez2004} where the corresponding result was shown in the
  nonsymmetric case. Writing $\varphi_j(|\xi|):=\hat \Phi_j(\xi)$ we deduce from Plancherel's identity and the
  $G_k$-symmetric Stein-Tomas Theorem~\eqref{eq:STnew}
 \begin{align*}
    \|T_jf\|_2^2
    %&= \|\Phi_j\ast f\|_2^2 
    &=  \|\hat \Phi_j \tau(|\cdot|) \hat  f\|_2^2 
    \;=\; \int_0^\infty \varphi_j(r)^2 \tau(r)^2 r^{d-1} \left( \int_{\mathbb S^{d-1}} |\hat
    f(r\omega)|^2\,d\sigma(\omega) \right)\,dr  \\
    &\les \int_{\frac{1}{2}}^{\frac{3}{2}} \varphi_j(r)^2 r^{d-1}  \|f\|_{p_{ST}}^2 \,dr 
    %\;\les\; \|\varphi_j (\cdot)^{\frac{d-1}{2}}\|_2^2 \|f\|_{p_{ST}}^2 \\
    \les \|\Phi_j\|_2^2 \|f\|_{p_{ST}}^2 
    \;\les\; 2^j \|f\|_{p_{ST}}^2.
  \end{align*}
  Here we used $\supp(\tau)\subset [\frac{1}{2},\frac{3}{2}]$ and, in the last estimate, 
  $$
    |\Phi_j(z)|
    \les \chi(2^{-j}|z|)|z|^{\frac{1-d}{2}}
    \les 2^{j\frac{1-d}{2}} \ind_{2^{j-1}\leq |z|\leq 2^{j+1}}
    \qquad\text{for all }z\in\R^d.
  $$ 
  Since $T_j$ is selfadjoint, this implies both inequalities.
\end{proof}
 
We need another bound as a replacement for the $L^1$-$L^\infty$-estimate in the nonsymmetric setting.
It relies pointwise estimates for the kernel function that we will prove using oscillatory integral
estimates. The lengthy proof of the latter is deferred to the Appendix
(Theorem~\ref{thm:OscillatoryIntegral}).
%We emphasize that such a difficulty does not
%occur in the nonsymmetric case where the $L^1$-$L^\infty$-estimate is proved with the aid of Young's
%% convolution inequality and the trivial estimate $\|\Phi_j\|_\infty\les 2^{j\frac{1-d}{2}}$. 
We introduce the phase function 
\begin{equation} \label{eq:def_Psij}
  \Psi_j(s_1,s_2):=2^{-j}\sqrt{t_1^2+t_2^2+\rho_1^2+\rho_2^2-2t_1\rho_1s_1-2t_2\rho_2s_2} 
  \qquad \text{for } s_1,s_2\in [-1,1]
\end{equation}   
and the first step is to rewrite the convolution $T_jf = \Phi_j\ast (\tau(|D|)f)$ in polar coordinates. We
recall our notational convention  $x=(y,z)\in\R^{d-k}\times\R^k$.

\begin{prop}  \label{prop:RepresentationFormula}
   We have for all $f\in\mathcal S_{G_k}(\R^d)$ and for all $j\geq 1$ 
  \begin{equation}\label{eq:Tjf_formula}
    T_jf(x) = \int_0^\infty \int_0^\infty K_j(t_1,t_2,\rho_1,\rho_2)
    \rho_1^{d-k-1}\rho_2^{k-1} f_0(\rho_1,\rho_2) \,d\rho_1\,d\rho_2
  \end{equation}
  where $|y|=t_1,|z|=t_2$, $f_0$ denotes the block-radial profile of $\tau(|D|)f$ and 
  $$
    K_j(t_1,t_2,\rho_1,\rho_2)
    = |\mathbb{S}^{d-k-2}||\mathbb{S}^{k-2}| \int_{-1}^1 \int_{-1}^1 (1-s_1^2)^{\frac{d-k-3}{2}}
    (1-s_2^2)^{\frac{k-3}{2}} \Phi_j(2^j \Psi_j(s_1,s_2)) \,ds_1\,ds_2.
  $$
\end{prop}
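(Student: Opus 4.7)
The plan is to take the convolution formula from Proposition~\ref{prop:AsymptoticsJ}, write it out in block-polar coordinates adapted to $\R^d=\R^{d-k}\times\R^k$, and exploit the $G_k$-symmetry of the integrand to reduce the two sphere integrals to integrals over $[-1,1]^2$ by the standard zonal formula.

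First I would set $g:=\tau(|D|)f$. Since $f\in\mathcal S_{G_k}(\R^d)$ and the symbol $\tau(|\cdot|)$ is radial, $g$ is again $G_k$-symmetric and Schwartz, so $g$ admits a block-radial profile $f_0$ with $g(w_1,w_2)=f_0(|w_1|,|w_2|)$ for $(w_1,w_2)\in\R^{d-k}\times\R^k$. Writing $x=(y,z)$ and using Proposition~\ref{prop:AsymptoticsJ},
\[
  T_jf(x)=\int_{\R^{d-k}}\int_{\R^k}\Phi_j\bigl((y-w_1,z-w_2)\bigr)f_0(|w_1|,|w_2|)\,dw_2\,dw_1.
\]
Passing to polar coordinates separately in $\R^{d-k}$ and $\R^k$, that is $w_1=\rho_1\omega_1$ with $\omega_1\in\mathbb S^{d-k-1}$ and $w_2=\rho_2\omega_2$ with $\omega_2\in\mathbb S^{k-1}$, gives
\[
  T_jf(x)=\int_0^\infty\!\!\int_0^\infty \rho_1^{d-k-1}\rho_2^{k-1}f_0(\rho_1,\rho_2)\,I(y,z,\rho_1,\rho_2)\,d\rho_1\,d\rho_2,
\]
where $I$ denotes the double spherical integral of $\Phi_j$.

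Next, since both $T_jf$ and $\Phi_j$ are $G_k$-symmetric (the latter being radial, hence in particular $G_k$-symmetric), the inner integral $I$ depends on $y,z$ only through $t_1:=|y|,\ t_2:=|z|$. I would therefore choose coordinates so that $y=t_1 e_1^{(d-k)}$ and $z=t_2 e_1^{(k)}$, in which case
\[
  |y-\rho_1\omega_1|^2+|z-\rho_2\omega_2|^2
  =t_1^2+t_2^2+\rho_1^2+\rho_2^2-2t_1\rho_1 s_1-2t_2\rho_2 s_2,
\]
with $s_j$ denoting the first coordinate of $\omega_j$. Using that $\Phi_j$ is radial, the integrand on each sphere is zonal and the standard formula
\[
  \int_{\mathbb S^{n-1}}h(\omega\cdot e_1)\,d\sigma(\omega)=|\mathbb S^{n-2}|\int_{-1}^1 h(s)(1-s^2)^{\frac{n-3}{2}}\,ds
\]
applied with $n=d-k$ and $n=k$ (both $\geq 2$ since we focus on $m\geq 2$) reduces $I$ to a double integral over $[-1,1]^2$ with the Jacobian factors $(1-s_1^2)^{(d-k-3)/2}(1-s_2^2)^{(k-3)/2}$. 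Recognizing the square root under the integrand as $2^j\Psi_j(s_1,s_2)$ via \eqref{eq:def_Psij} and, by radiality, writing $\Phi_j(2^j\Psi_j(s_1,s_2))$ identifies $I$ with $K_j(t_1,t_2,\rho_1,\rho_2)$ and yields \eqref{eq:Tjf_formula}.

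There is no real obstacle here: the statement is a bookkeeping exercise combining the convolution formula from Proposition~\ref{prop:AsymptoticsJ} with the reduction of zonal sphere integrals. The only point that deserves care is verifying that $\tau(|D|)$ preserves $G_k$-symmetry so that the profile $f_0$ is well defined, and that the chosen orientation of $y,z$ is permissible, both of which follow from the rotational invariance of $\Phi_j$ and of the two sphere measures.
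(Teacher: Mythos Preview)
Your argument is correct and follows the same route as the paper: pass to block-polar coordinates and apply the Funk--Hecke (zonal) formula on each sphere to reduce to the double integral over $[-1,1]^2$. The only cosmetic point is that the convolution formula for $T_jf$ comes from its definition in \eqref{eq:defPhij} rather than from Proposition~\ref{prop:AsymptoticsJ}, which concerns $Tf$.
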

\begin{proof}
  Passing to polar coordinates in $\R^{d-k}$ and $\R^k$, respectively, we find   \eqref{eq:Tjf_formula}
  for 
  $$
    K_j(|y|,|z|,\rho_1,\rho_2)  
     =  \int_{\mathbb S^{d-k-1}} \int_{\mathbb S^{k-1}}
    \Phi_j(|x-(\rho_1\omega_1,\rho_2\omega_2)|) \,d\sigma(\omega_1)\,d\sigma(\omega_2). 
  $$
  The Funk-Hecke formula \cite[p.30]{Mueller1998} gives
  \begin{align*}
    &\; \int_{\mathbb S^{d-k-1}} \int_{\mathbb S^{k-1}}
    \Phi_j(|x-(\rho_1\omega_1,\rho_2\omega_2)|) \,d\sigma(\omega_1)\,d\sigma(\omega_2) \\
    &= \int_{\mathbb S^{d-k-1}} \int_{\mathbb S^{k-1}}
    \Phi_j\left(\sqrt{|y|^2+|z|^2+\rho_1^2+\rho_2^2-2\rho_1
    y\cdot\omega_1-2\rho_2z\cdot\omega_2}\right) \,d\sigma(\omega_1)\,d\sigma(\omega_2) \\
    &=  |\mathbb{S}^{d-k-2}||\mathbb{S}^{k-2}|  \int_{-1}^1 \int_{-1}^1 
     (1-s_1^2)^{\frac{d-k-3}{2}} (1-s_2^2)^{\frac{k-3}{2}}
    \Phi_j(2^j\Psi_j(s_1,s_2)) \,ds_1\,ds_2,  
  \end{align*}
  which is all we had to show.
\end{proof}
 
 Our estimate for the kernel function in polar coordinates reads as follows: 

\begin{prop} \label{prop:KjBound}
    We have for all $j\geq 1$ and $t_1,t_2,\rho_1,\rho_2\geq 0$ 
  \begin{equation} \label{eq:est_Kj}
    |K_j(t_1,t_2,\rho_1,\rho_2)|
    \les 2^{j\frac{1-d}{2}}
    \min\left\{ 1, (2^{-j}\rho_1t_1)^{-\frac{d-k-1}{2}}\right\}
    \min\left\{ 1, (2^{-j}\rho_2t_2)^{-\frac{k-1}{2}}\right\}.
  \end{equation}
\end{prop}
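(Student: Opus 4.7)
Starting from the integral formula in Proposition~\ref{prop:RepresentationFormula} and the definition~\eqref{eq:defPhij} of $\Phi_j$, inserting the asymptotic expansion~\eqref{eq:BesselJAsymptotics} reduces matters (up to terms that decay faster in $j$) to estimating double oscillatory integrals of the form
\[
 2^{j\frac{1-d}{2}} \int_{-1}^1\!\!\int_{-1}^1 (1-s_1^2)^{\frac{d-k-3}{2}}(1-s_2^2)^{\frac{k-3}{2}}\, a_j(s_1,s_2)\, e^{\pm i R_j(s_1,s_2)}\,ds_1\,ds_2
\]
with phase $R_j(s_1,s_2):=2^j\Psi_j(s_1,s_2)=\sqrt{t_1^2+t_2^2+\rho_1^2+\rho_2^2-2t_1\rho_1 s_1-2t_2\rho_2 s_2}$ and a smooth amplitude $a_j$ supported where $\Psi_j\sim 1$, with derivatives of all orders controlled uniformly in $j$ and in the parameters $t_i,\rho_i$. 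Since the weights are integrable on $[-1,1]^2$, the trivial bound $|a_j|\les 1$ immediately yields $|K_j|\les 2^{j(1-d)/2}$. This already proves~\eqref{eq:est_Kj} in the regime where at least one of the quantities $\lambda_i:=2^{-j}\rho_i t_i$ satisfies $\lambda_i\les 1$, since the corresponding minimum on the right hand side is then of order $1$.

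In the complementary regime where both $\lambda_1,\lambda_2 \gg 1$, the required extra decay must come from the oscillation of $e^{\pm i R_j}$. Since $R_j\sim 2^j$ on the support of $a_j$, the phase derivatives $\partial_{s_i} R_j=-t_i\rho_i/R_j$ are of order $\lambda_i$ and hence nondegenerate in each variable separately. The plan is to iterate the appendix oscillatory integral estimate (Theorem~\ref{thm:OscillatoryIntegral}): treating $s_2$ as a parameter, the $s_1$-integral is a one-dimensional oscillatory integral with amplitude $(1-s_1^2)^{(d-k-3)/2}a_j(\cdot,s_2)$ and smooth nondegenerate phase, which produces a gain $\lambda_1^{-(d-k-1)/2}$; symmetrically, the subsequent $s_2$-integration produces a gain $\lambda_2^{-(k-1)/2}$. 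Multiplying these factors into the prefactor $2^{j(1-d)/2}$ yields exactly~\eqref{eq:est_Kj}.

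The main obstacle is that the weights $(1-s_i^2)^{(d_i-3)/2}$, with $d_1=d-k$ and $d_2=k$, are only H\"older-regular (and possibly singular) at the endpoints $s_i=\pm 1$: the exponents $(d-k-1)/2$ and $(k-1)/2$ appearing in the claim are precisely the maximal decay rates that an integration-by-parts scheme can extract against such weights, and this is the content of the appendix theorem. A secondary point to verify is that the amplitude derivatives of $a_j$ in $s_i$ are of order $\lambda_i 2^{-j}$ (since they involve one more factor of $1/R_j\sim 2^{-j}$ compared to $\partial_{s_i} R_j$), so they are dominated by the phase derivatives by a factor $2^{-j}$ and hence do not spoil the iterated integration by parts.
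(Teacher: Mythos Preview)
Your overall approach is the paper's: rewrite $K_j$ as a double oscillatory integral with phase $2^j\Psi_j$ and amplitude supported where $\Psi_j\sim 1$, then invoke the appendix estimate Theorem~\ref{thm:OscillatoryIntegral}. However, there are two concrete gaps in your execution.

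First, the case split is wrong. The trivial bound $|K_j|\les 2^{j(1-d)/2}$ matches the right-hand side of~\eqref{eq:est_Kj} only when \emph{both} $\lambda_i:=2^{-j}\rho_it_i\les 1$. If, say, $\lambda_1\les 1$ but $\lambda_2\gg 1$, the claimed bound is $2^{j(1-d)/2}\lambda_2^{-(k-1)/2}$, which is strictly smaller, and the trivial estimate does not give it. So ``at least one $\lambda_i\les 1$'' is not the regime handled by the trivial bound.

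Second, the ``iteration'' you describe does not work and is not what Theorem~\ref{thm:OscillatoryIntegral} does. If you first bound the $s_1$-integral by $C\lambda_1^{-(d-k-1)/2}$ uniformly in $s_2$, you have taken absolute values and destroyed the oscillation in $s_2$; the remaining $s_2$-integral then yields no further decay. The appendix theorem is a genuinely two-dimensional estimate that delivers the product
\[
\min\{1,|\lambda B_1|^{-\frac{d-k-1}{2}}\}\,\min\{1,|\lambda B_2|^{-\frac{k-1}{2}}\}
\]
in one stroke, for all parameter regimes (including the mixed ones) simultaneously. The paper's proof simply applies it with $\lambda=2^j$, $B_i=2\cdot 4^{-j}\rho_it_i$ (so $\lambda B_i\sim\lambda_i$), $\alpha_1=\tfrac{d-k-3}{2}$, $\alpha_2=\tfrac{k-3}{2}$; no case distinction or iteration is needed at this level. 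The mechanism that produces the product of gains lives inside the proof of Theorem~\ref{thm:OscillatoryIntegral}: there one integrates by parts in $s_1$ \emph{before} estimating the inner $s_2$-integral (via Proposition~\ref{prop:OscIntEstimatesI}), so the $s_2$-oscillation is never thrown away. That is not a Fubini-type iteration of one-dimensional bounds, and your sketch does not capture it.
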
 
\begin{proof}
   Combining the formulas for $K_j$ from Proposition~\ref{prop:RepresentationFormula} and
   \eqref{eq:defPhij} gives   
   \begin{align*}
     &\; |K_j(t_1,t_2,\rho_1,\rho_2)| \\  
  %  &\leq \sum_{l=1}^L \left|\int_{-1}^1\int_{-1}^1 (1-s_1^2)^{\frac{d-k-3}{2}} (1-s_2^2)^{\frac{k-3}{2}}  
  %  \Phi_j(2^j\Psi_j(s_1,s_2)) \,ds \right| \\
    &\les  \sum_{l=0}^{L-1}   2^{j(\frac{1-d}{2}-l)}  \left|\int_{-1}^1\int_{-1}^1
    (1-s_1^2)^{\frac{d-k-3}{2}} (1-s_2^2)^{\frac{k-3}{2}} \chi_l(\Psi_j(s_1,s_2)) e^{i \lambda\Psi_j(s_1,s_2)}
    \,ds_1\,ds_2\right|
  \end{align*}
  with $\lambda=2^j$ and $\chi_l(z):= \chi(z)|z|^{\frac{1-d}{2}-l}$, so  $\chi_l\in C_0^\infty(\R)$
  with $\supp(\chi_l)=\supp(\chi)\subset[\frac{1}{2},2]$. The phase function from~\eqref{eq:def_Psij} can be
  written as $\Psi_j(s)=\sqrt{A-B_1s_1-B_2s_2}$ where 
  $$
    A= 4^{-j}(\rho_1^2+\rho_2^2+t_1^2+t_2^2),\quad
    B_1 = 4^{-j}\cdot 2\rho_1t_1,\quad B_2 = 4^{-j}\cdot
   2\rho_2t_2.
  $$   
  In Theorem~\ref{thm:OscillatoryIntegral} (see Appendix) we prove the following estimate:
  \begin{align} \label{eq:oscint_estimate}
    \begin{aligned}
     &\;\left|\int_{-1}^1\int_{-1}^1
    (1-s_1^2)^{\frac{d-k-3}{2}} (1-s_2^2)^{\frac{k-3}{2}} \chi_l(\Psi_j(s_1,s_2)) e^{i \lambda\Psi_j(s_1,s_2)}
    \,ds_1\,ds_2\right| \\
    &\les  \min\left\{ 1, |\lambda B_1|^{-\frac{d-k-1}{2}}\right\}  \min\left\{ 1,
    |\lambda B_2|^{-\frac{k-1}{2}}\right\}. 
  \end{aligned}
  \end{align}
  Plugging in the values for $B_1,B_2,\lambda$ we find
  \begin{align*}
    |K_j(t_1,t_2,\rho_1,\rho_2)|
    &\les \sum_{l=0}^{L-1}   2^{j(\frac{1-d}{2}-l)}\cdot \min\left\{ 1, (2^{-j}\rho_1t_1)^{-\frac{d-k-1}{2}}\right\}
    \min\left\{ 1, (2^{-j}\rho_2t_2)^{-\frac{k-1}{2}}\right\} \\
    %&\les \sum_{l=1}^L   2^{j(\frac{1-d}{2}-l)} \min\left\{ 1,
    % \left(\frac{2^j}{\rho_1t_1}\right)^{\frac{d-k-1}{2}}\right\} \min\left\{ 1, \left(\frac{2^j}{\rho_2t_2}\right)^{\frac{k-1}{2}}\right\} \b{\ind_{A_j}} \\
    &\les 2^{j\frac{1-d}{2}} \min\left\{ 1, (2^{-j}\rho_1t_1)^{-\frac{d-k-1}{2}}\right\}
    \min\left\{ 1, (2^{-j}\rho_2t_2)^{-\frac{k-1}{2}}\right\}.
  \end{align*}
\end{proof}

\begin{rem}
  We emphasize that the presence of the oscillatory factor
  $e^{i\lambda\Psi_j(s_1,s_2)}$ is crucial for our application. In fact, 
  the pointwise bound for $K_j$ cannot be proved without it. Indeed, for $t_1=\rho_1=2^j$ and $t_2=\rho_2=0$
  the term $\Psi_j(s_1,s_2)$ is independent of $j$. So the integral
  $$
    \int_{-1}^1\int_{-1}^1
    (1-s_1^2)^{\frac{d-k-3}{2}} (1-s_2^2)^{\frac{k-3}{2}} \chi_l(\Psi_j(s_1,s_2))\,ds_1\,ds_2
  $$
  is constant with respect to $j$ whereas the upper bound \eqref{eq:est_Kj} decays to zero. This decay is due
  to the oscillatory factor.
\end{rem} 

These pointwise bounds for $K_j$ reveal a different behaviour with respect to the $y$- and $z$-variable. 
To take this into account we consider mixed norm spaces and introduce
\begin{align*}
  \mathcal L^{p_1}_y &:= L^{p_1,1}(\R^{d-k})\quad\text{if }p_1=\frac{2(d-k)}{d-k+1}
  \quad\text{and}\quad 
  \mathcal L^{p_1}_y := L^{p_1}(\R^{d-k})\quad\text{if }1\leq p_1<\frac{2(d-k)}{d-k+1}, \\
  \mathcal L^{p_2}_z &:= L^{p_2,1}(\R^k)\qquad\text{if } p_2=\frac{2k}{k+1}
  \;\;\,\quad\quad\text{and}\quad    
  \mathcal L^{p_2}_z := L^{p_2}(\R^k)\qquad \text{if } 1\leq p_2<\frac{2k}{k+1}.
\end{align*}
Recall  $\tau(|D|)f(x) = f_0(|y|,|z|)$ with $x=(y,z)$
and $y\in\R^{d-k},z\in\R^k$. It turns out that our estimates can be nicely formulated in the Banach spaces  
$$
  X_{\vec p}:= \mathcal L^{p_1}_y(\mathcal L^{p_2}_z) +  \mathcal L^{p_2}_z(\mathcal L^{p_1}_y)
  \qquad\text{where }\vec p:=(p_1,p_2).
$$
The corresponding norm is given by 
\begin{align*}
  \|u\|_{X_{\vec p}} 
  &:= \inf_{u_1+u_2=u} \|u_1\|_{\mathcal L^{p_1}_y(\mathcal L^{p_2}_z)} 
  + \|u_2\|_{\mathcal L^{p_2}_z(\mathcal L^{p_1}_y)}.  
  \\
  &= \inf_{u_1+u_2=u} \Big\| \|u_1(y,z)\|_{\mathcal L^{p_2}_z} \Big\|_{\mathcal L_y^{p_1}}
   +  \Big\| \|u_2(y,z)\|_{\mathcal L^{p_1}_y} \Big\|_{\mathcal L_z^{p_2}}.
\intertext{ 
Its dual is, thanks to $p_1,p_2<\infty$, $X_{\vec p}' = (\mathcal L^{p_1}_y)' (\mathcal L^{p_2}_z)' \cap
(\mathcal L^{p_2}_z)'(\mathcal L^{p_1}_y)'$, see~\cite[Proposition~1]{Mandel2023}. The corresponding norm is
} 
  \|u\|_{X_{\vec p}'} 
  &=  \|u \|_{(\mathcal L^{p_1}_y)'(\mathcal L^{p_2}_z)'} 
  + \|u \|_{(\mathcal L^{p_2}_z)'(\mathcal L^{p_1}_y)'}.
\end{align*}
Depending on $p_1,p_2$, this space may 
be rewritten in terms of Lebesgue spaces $L^{p_1'}(\R^{d-k}),L^{p_2'}(\R^{k})$ or 
Lorentz spaces $L^{p_1',\infty}(\R^{d-k}),L^{p_2',\infty}(\R^k)$.
Note that in most cases we have $\mathcal L^{p_1}_y(\mathcal L^{p_2}_z) \neq  \mathcal
L^{p_2}_z(\mathcal L^{p_1}_y)$, see \cite[p.302]{BenedekPanzone1961}. 
The exceptional case is $p_1=p_2=: r\in [1,\frac{2m}{m+1})$ where 
$\mathcal L^r_y(\mathcal L^r_z)=\mathcal L^r_z(\mathcal L^r_y)=L^r(\R^d)$ by the Tonelli-Fubini Theorem.

\begin{lem}\label{lem:XpEstimates}
  Assume $1\leq p_1\leq \frac{2(d-k)}{d-k+1}$ and $1\leq p_2\leq \frac{2k}{k+1}$. Then we have for all
  $f\in\mathcal S_{G_k}(\R^d)$ 
  $$
    \|T_j f\|_{X_{\vec p}'} \les 2^{j(\frac{1+d}{2}-\frac{d-k}{p_1}-\frac{k}{p_2})}
    \|f\|_{X_{\vec p}}. 
  $$   
\end{lem}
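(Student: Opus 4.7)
The plan is to exploit the product structure of the pointwise bound from Proposition~\ref{prop:KjBound} in order to dominate $T_j$ by the composition of two positive one-dimensional radial integral operators, one on each factor $\R^{d-k},\R^k$, and then pass from scalar to mixed-norm bounds by a vector-valued extension. Concretely, Proposition~\ref{prop:RepresentationFormula} together with \eqref{eq:est_Kj} yields, for block-radial $f\geq 0$,
$$
  |T_jf(y,z)|\les 2^{j\frac{1-d}{2}}(\mathcal A_j\mathcal B_j f)(y,z),
$$
where $\mathcal A_j$ is the positive radial integral operator on $\R^{d-k}$ with kernel $A_j(|y|,\rho_1):=\min\{1,(2^{-j}|y|\rho_1)^{-(d-k-1)/2}\}$ and $\mathcal B_j$ is its analogue on $\R^k$ with exponent $(k-1)/2$; they commute because they act on disjoint variables.

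The dilation $y\mapsto 2^{j/2}y$ trivialises the $j$-dependence in $\mathcal A_j$ and a direct computation gives $\|\mathcal A_j\|_{\mathcal L^{p_1}_y\to(\mathcal L^{p_1}_y)'}\les 2^{j(d-k)/p_1'}\|\mathcal A_0\|_{\mathcal L^{p_1}_y\to(\mathcal L^{p_1}_y)'}$, and similarly $\|\mathcal B_j\|\les 2^{jk/p_2'}\|\mathcal B_0\|$. The bulk of the analytic work is thus to establish the scalar radial bound $\mathcal A_0:\mathcal L^{p_1}_y\to(\mathcal L^{p_1}_y)'$ (and its $\mathcal B_0$-analogue). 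After passing to logarithmic coordinates $s=\log\rho,\tau=\log t$, the radial kernel $\tilde A(t\rho)=\min\{1,(t\rho)^{-\alpha}\}$ becomes translation-invariant, so $\mathcal A_0$ is conjugate to convolution on $\R$ with kernel $\Psi(r)=\min\{1,e^{-\alpha r}\}\,e^{nr/p_1'}$, where $n=d-k$ and $\alpha=(n-1)/2$. Young's convolution inequality delivers the desired $L^{p_1}\to L^{p_1'}$ bound whenever $\Psi\in L^{p_1'/2}(\R)$, which is exactly the range $p_1<\frac{2(d-k)}{d-k+1}$. At the endpoint $p_1=\frac{2(d-k)}{d-k+1}$, $\Psi$ just fails to lie in $L^{p_1'/2}$; instead, I would split the radial integral into the regions $|y||y'|\lessgtr 1$ and use the Lorentz H\"older inequality $|\int gh|\leq \|g\|_{p_1,1}\|h\|_{p_1',\infty}$ on each piece to obtain the weak endpoint $\mathcal A_0:L^{p_1,1}_{\mathrm{rad}}\to L^{p_1',\infty}_{\mathrm{rad}}$, which matches the definition of $\mathcal L^{p_1}_y$.

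To promote the scalar radial bounds to the claimed mixed-norm estimate on $T_j:X_{\vec p}\to X'_{\vec p}$, I would use that positive-kernel operators admit vector-valued extensions without loss in operator norm via Minkowski's inequality in the target Banach space: for any $Y$ one gets $\mathcal A_j:\mathcal L^{p_1}_y(Y)\to(\mathcal L^{p_1}_y)'(Y)$ with the same norm, and likewise for $\mathcal B_j$. Since $\mathcal A_j$ and $\mathcal B_j$ commute and act on disjoint variables, iterating these vector-valued bounds in the two natural orders proves the two "aligned" mixed-norm estimates from $\mathcal L^{p_1}_y(\mathcal L^{p_2}_z)$ into $(\mathcal L^{p_1}_y)'(\mathcal L^{p_2}_z)'$ and from $\mathcal L^{p_2}_z(\mathcal L^{p_1}_y)$ into $(\mathcal L^{p_2}_z)'(\mathcal L^{p_1}_y)'$, with exponent $c=\frac{1+d}{2}-\frac{d-k}{p_1}-\frac{k}{p_2}$ produced exactly by the scaling computation above. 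The two remaining "crossed" bounds follow from the aligned ones by Minkowski's integral inequality applied to the source or target iterated norm, the choice being dictated by the sign of $p_1-p_2$. Summing the four inequalities via the sum-intersection duality built into the definitions of $X_{\vec p}$ and $X'_{\vec p}$ then gives the claim.

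The hardest step is expected to be the endpoint scalar estimate for $\mathcal A_0$ (and $\mathcal B_0$), where Young's convolution inequality breaks down; ensuring that the endpoint weak-type bound is compatible with the vector-valued extension used in the mixed-norm step is the delicate technical point that keeps the whole argument viable at the corner $p_1=\frac{2(d-k)}{d-k+1},\ p_2=\frac{2k}{k+1}$, which is precisely the configuration treated by the subsequent real interpolation machinery for mixed Lorentz spaces from~\cite{Mandel2023}.
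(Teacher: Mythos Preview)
Your factorisation $|T_jf|\les 2^{j(1-d)/2}\mathcal A_j\mathcal B_j|f|$ and the log-coordinate Young argument for the scalar bounds $\mathcal A_0:\mathcal L^{p_1}_y\to(\mathcal L^{p_1}_y)'$, $\mathcal B_0:\mathcal L^{p_2}_z\to(\mathcal L^{p_2}_z)'$ are correct and in fact cleaner than the paper's route in the strict interior: you obtain strong-type aligned bounds directly, whereas the paper first proves the \emph{pointwise} inequality
\[
  |T_jf(y,z)|\les 2^{j(\frac{1+d}{2}-\frac{d-k}{p_1}-\frac{k}{p_2})}\,|y|^{-\frac{d-k}{p_1'}}|z|^{-\frac{k}{p_2'}}\,\|f\|_{\mathcal L^{p_1}_y(\mathcal L^{p_2}_z)}
\]
via (Lorentz-)H\"older in each radial variable and then upgrades the resulting weak-type bounds to strong type by real interpolation of mixed Lorentz spaces, invoking \cite{ChenSun2022} and \cite{Mandel2023}. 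The vector-valued extension you use for the aligned bounds is also fine: positivity of the kernel gives $\|\mathcal A_j f(y)\|_Y\le \mathcal A_j(\|f(\cdot)\|_Y)(y)$ for any Banach space $Y$, and the scalar bound then applies.

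The gap is the Minkowski step for the \emph{crossed} bounds at the Lorentz endpoints. To map $X_{\vec p}$ into $X_{\vec p}'$ you need, for instance, $\mathcal L^{p_1}_y(\mathcal L^{p_2}_z)\to(\mathcal L^{p_2}_z)'((\mathcal L^{p_1}_y)')$, and you propose to obtain it from an aligned bound by swapping the order of an iterated norm. When $p_1=\frac{2(d-k)}{d-k+1}$ or $p_2=\frac{2k}{k+1}$ this swap involves $L^{p,1}$ or $L^{p',\infty}$, and the generalised Minkowski embedding you need (e.g.\ $L^{p_1',\infty}_y(L^{p_2'}_z)\hookrightarrow L^{p_2'}_z(L^{p_1',\infty}_y)$) is not the classical Minkowski inequality and does not follow from it; $L^{p',\infty}$ lacks the finite concavity that makes such embeddings automatic. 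This is precisely the endpoint configuration you flag as delicate, but the problematic step is the order swap, not the vector-valued extension. The fix is immediate once you notice that your own endpoint scalar estimate is really a pointwise bound $\mathcal A_0 g(t)\les t^{-(d-k)/p_1'}\|g\|_{p_1,1}$ (just Lorentz--H\"older against $\min\{1,(t\rho)^{-\alpha}\}\in L^{p_1',\infty}$); iterating the pointwise form rather than the operator-norm form produces the crossed bounds directly, and this is exactly how the paper proceeds. A minor omission: the representation formula involves $\tau(|D|)f$, so one also needs $\|\tau(|D|)f\|_{X_{\vec p}}\les\|f\|_{X_{\vec p}}$, which the paper gets from Young's inequality in mixed Lorentz spaces.
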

\begin{proof}
   Proposition~\ref{prop:RepresentationFormula} implies for $x=(y,z),|y|=t_1,|z|=t_2$
   \begin{align*}
     T_jf(x) 
     &= \int_0^\infty \int_0^\infty  K_j(t_1,t_2,\rho_1,\rho_2)
     \rho_1^{d-k-1}\rho_2^{k-1} f_0(\rho_1,\rho_2)\,d\rho_2 \,d\rho_1. 
%      \\
%      &\les 2^{j\frac{1-d}{2}} 
%       \int_0^\infty \int_0^\infty
%       \min\left\{ 1, (2^{-j}\rho_1t_1)^{-\frac{d-k-1}{2}}\right\}
%     \min\left\{ 1, (2^{-j}\rho_2t_2)^{-\frac{k-1}{2}}\right\}
%      \rho_1^{d-k-1}\rho_2^{k-1} f_0(\rho_1,\rho_2)\,d\rho_2 \,d\rho_1.
   \end{align*} 
   We now use the pointwise bounds for $K_j$ from Proposition~\ref{prop:KjBound} and H\"older's
   inequality in Lorentz spaces with respect to the $\rho_2$-variable. So we get for $p_2=\frac{2k}{k+1}$ the
   estimate
   \begin{align*}
     |T_j f(x)| 
     &\les \int_0^\infty  \|
     K_j(t_1,t_2,\rho_1,\cdot) \rho_1^{d-k-1} (\cdot)^{\frac{k-1}{p_2'}} \|_{L^{p_2',\infty}(\R_+)}
     \|(\cdot)^{\frac{k-1}{p_2}}f_0(\rho_1,\cdot)\|_{L^{p_2,1}(\R_+)}\,d\rho_1 \\
     &\stackrel{\eqref{eq:est_Kj}}\les 2^{j\frac{1-d}{2}} \int_0^\infty \rho_1^{d-k-1}
      \min\left\{ 1, (2^{-j}\rho_1t_1)^{-\frac{d-k-1}{2}}\right\}  \cdot \\
      &\qquad\qquad\qquad
    \left\|\min\left\{ 1,
    (2^{-j}(\cdot)t_2)^{-\frac{k-1}{2}}\right\} (\cdot)^{\frac{k-1}{p_2'}}
    \right\|_{L^{p_2',\infty}(\R_+)}\|f_0(\rho_1,|\cdot|)\|_{L^{p_2,1}(\R^k)}\,d \rho_1  \\
%      &\les 2^{j\frac{1-d}{2}} \int_0^\infty  \rho_1^{d-k-1}
%       \min\left\{ 1, (2^{-j}\rho_1t_1)^{-\frac{d-k-1}{2}}\right\}  \cdot \\
%       &\qquad
%     \left\|\min\left\{ 1,
%     (2^{-j}\rho_2t_2)^{-\frac{k-1}{2}}\right\} (\cdot)^{\frac{k-1}{p_2'}}
%     \right\|_{L^{p_2',\infty}(\R_+;\nu_2)} \|f_0(\rho_1,|\cdot|)\|_{L^{p_2,1}(\R^k)}\,d \rho_1  \\
     &\les 2^{j\frac{1-d}{2}} (2^jt_2^{-1})^{\frac{k}{p_2'}} \int_0^\infty 
     \rho_1^{d-k-1}  \min\{ 1, (2^{-j}\rho_1t_1)^{-\frac{d-k-1}{2}}\}
     \|f_0(\rho_1,|\cdot|)\|_{L^{p_2,1}(\R^k)} \,d\rho_1. 
   \end{align*}   
   In the case $1\leq p_2<\frac{2k}{k+1}$ one may use the classical H\"older inequality 
   instead. We conclude for $1\leq p_2\leq \frac{2k}{k+1}$ 
   \begin{align*}
     |T_j f(x)| 
     \les 2^{j\frac{1-d}{2}} (2^jt_2^{-1})^{\frac{k}{p_2'}} \int_0^\infty  \rho_1^{d-k-1}
      \min\{ 1,(2^{-j}\rho_1t_1)^{-\frac{d-k-1}{2}} \}
     \|f_0(\rho_1,|z|)\|_{\mathcal L^{p_2}_z} \,d\rho_1. 
   \end{align*}
   The analogous estimate with respect to the $y$-variable gives
   \begin{align} \label{eq:Tj_estimate}
     \begin{aligned}
     |T_j f(x)| 
     &\les 2^{j\frac{1-d}{2}} (2^jt_2^{-1})^{\frac{k}{p_2'}} (2^jt_1^{-1})^{\frac{d-k}{p_1'}}
     \Big\| \|f_0(|y|,|z|)\|_{\mathcal L^{p_2}_z} \Big\|_{\mathcal L^{p_1}_y} \\
     &= 2^{j\frac{1+d}{2}-\frac{d-k}{p_1}-\frac{k}{p_2}} |y|^{-\frac{d-k}{p_1'}} |z|^{-\frac{k}{p_2'}}
     \|\tau(|D|) f\|_{\mathcal L^{p_1}_y(\mathcal L^{p_2}_z)} \\
     &\les 2^{j\frac{1+d}{2}-\frac{d-k}{p_1}-\frac{k}{p_2}} |y|^{-\frac{d-k}{p_1'}} |z|^{-\frac{k}{p_2'}}
     \|f\|_{\mathcal L^{p_1}_y(\mathcal L^{p_2}_z)}.  
     \end{aligned}
   \end{align}
   In the last estimate we applied Young's convolution inequality in mixed norm Lorentz spaces
   from \cite[Theorem~2]{KhaiTri2014} to $\tau(|D|)f = K\ast f$ where $K:=\mathcal F^{-1}(\tau(|\cdot|))$ is a
   Schwartz function. (The result for mixed Lebesgue spaces can be found in 
   \cite[Theorem II.1.(b)]{BenedekPanzone1961} or \cite[Theorem~3.1]{GreySinnamon2016}.)
   
   \medskip
   
   Considering the estimate~\eqref{eq:Tj_estimate} for a fixed pair $(p_1,p_2)$ is not enough to prove our
   claim, but real interpolation theory does the job unless we are in an endpoint case. To see this, set the
   first endpoint to be 1 and the second endpoint to be $r:=\frac{2(d-k)}{d-k+1}$. Our first aim is to
   deduce, for any fixed $z\in\R^k$ and $p_1$ between these endpoints,
   \begin{align} \label{eq:Tj_estimateII} 
     \|T_j f(\cdot,z)\|_{(\mathcal L^{p_1}_y)'}     
     &\les 2^{j\frac{1+d}{2}-\frac{d-k}{p_1}-\frac{k}{p_2}}  |z|^{-\frac{k}{p_2'}}
     \|f\|_{\mathcal L^{p_1}_y(\mathcal L^{p_2}_z)}. 
   \end{align}   
   For the first endpoint $p_1=1$ we have $(\mathcal L^{p_1}_y)'=L^\infty(\R^{d-k})$ and 
   for the second endpoint case $p_1= r$ we have $(\mathcal L^{p_1}_y)'=L^{r',\infty}(\R^{d-k})$. In these
   cases, estimate~\eqref{eq:Tj_estimateII} is immediate from~\eqref{eq:Tj_estimate}. For $1<p_1<r$ we use
   real interpolation  and choose $\theta\in (0,1)$ such that
   $\frac{1-\theta}{1}+\frac{\theta}{r}=\frac{1}{p_1}$. 
   Then $\mathcal L_y^{p_1}=L^{p_1}(\R^{d-k})$ and real interpolation of the
   estimates in~\eqref{eq:Tj_estimate} gives
   \begin{align*}% \label{eq:Tj_estimateIV}
     %\begin{aligned}     
     \|T_j f(\cdot,z)\|_{(\mathcal L^{p_1}_y)'}
     &= \|T_j f(\cdot,z)\|_{L^{p_1'}(\R^{d-k})} \\
     &\eqsim \|T_j f(\cdot,z)\|_{(L^{\infty}(\R^{d-k}),L^{r',\infty}(\R^{d-k}))_{\theta,p_1'}} \\
     &\stackrel{\eqref{eq:Tj_estimate}}\les 
     2^{j\frac{1+d}{2}-(d-k)(\frac{1-\theta}{1}+\frac{\theta}{r})-\frac{k}{p_2}}  
     |z|^{-\frac{k}{p_2'}} \|f\|_{(L^1(\R^{d-k})(\mathcal L^{p_2}_z),L^{r,1}(\R^{d-k})(\mathcal
     L^{p_2}_z))_{\theta,p_1'}}  \\     
     &\les 
     2^{j\frac{1+d}{2}-\frac{d-k}{p_1}-\frac{k}{p_2}}  
     |z|^{-\frac{k}{p_2'}} \|f\|_{(L^1(\R^{d-k}),L^{r,1}(\R^{d-k}))_{\theta,p_1'}(\mathcal L^{p_2}_z)} \\
     &\eqsim 
     2^{j\frac{1+d}{2}-\frac{d-k}{p_1}-\frac{k}{p_2}}  
     |z|^{-\frac{k}{p_2'}} \|f\|_{L^{p_1,p_1'}(\R^{d-k})(\mathcal L^{p_2}_z)} \\
     &\les 2^{j\frac{1+d}{2}-\frac{d-k}{p_1}-\frac{k}{p_2}}   |z|^{-\frac{k}{p_2'}}
     \|f\|_{L^{p_1}(\R^{d-k})(\mathcal L^{p_2}_z)}\\
     &= 2^{j\frac{1+d}{2}-\frac{d-k}{p_1}-\frac{k}{p_2}}   |z|^{-\frac{k}{p_2'}}
     \|f\|_{\mathcal L^{p_1}_y(\mathcal L^{p_2}_z)}.
    % \end{aligned}      
   \end{align*} 
   In the fourth line we used Corollary~4.5 in \cite{ChenSun2022} and in the sixth line we used $p_1\leq
   p_1'$ due to $1\leq p_1<r<2$.
   This proves~\eqref{eq:Tj_estimateII}.
   
   \medskip
   
   Now we perform the analogous argument with respect to $z$. The endpoint cases are now 
   $p_2=1$ and $p_2=r:=\frac{2k}{k+1}$. Again, for $p_2=1$ and $p_2=r$ the estimate
    \begin{align*}% \label{eq:Tj_estimateIII} 
     \|T_j f \|_{(\mathcal L^{p_2}_z)'(\mathcal L^{p_1}_y)'}     
     &\les 2^{j\frac{1+d}{2}-\frac{d-k}{p_1}-\frac{k}{p_2}}  
     \|f\|_{\mathcal L^{p_1}_y(\mathcal L^{p_2}_z)}  
   \end{align*}    
   is an immediate consequence of~\eqref{eq:Tj_estimateII}. So it remains to 
   consider the case $1<p_2<r$ where we use real interpolation once more. 
   We choose $\theta\in (0,1)$ such that $\frac{1-\theta}{1}+\frac{\theta}{r}=\frac{1}{p_2}$. Then 
   \begin{align} \label{eq:InterpolatedBoundsI}
     \begin{aligned}
     \|T_j f\|_{(\mathcal L^{p_2}_z)'(\mathcal L^{p_1}_y)'}
     &= \|T_j f\|_{L^{p_2'}(\R^k)(\mathcal L^{p_1}_y)'} \\
     &\les   \|T_j f\|_{(L^{\infty}(\R^k)(\mathcal L^{p_1}_y)',L^{r',\infty}(\R^k)(\mathcal
     L^{p_1}_y)')_{\theta,p_2'}}  \\      
     &\stackrel{\eqref{eq:Tj_estimateII}}\les
       2^{j\frac{1+d}{2}-\frac{d-k}{p_1}-k(\frac{1-\theta}{1}+\frac{\theta}{r})}    
     \|f\|_{(\mathcal L^{p_1}_y(L^1(\R^k)),\mathcal L^{p_1}_y(L^{r,1}(\R^k)))_{\theta,p_2'}}
     \\
     &\les  2^{j\frac{1+d}{2}-\frac{d-k}{p_1}-\frac{k}{p_2}}    
     \|f\|_{\mathcal L^{p_1}_y((L^1(\R^k),L^{r,1}(\R^k))_{\theta,p_2'})} \\
     &\eqsim  2^{j\frac{1+d}{2}-\frac{d-k}{p_1}-\frac{k}{p_2}}    
     \|f\|_{\mathcal L^{p_1}_y(L^{p_2,p_2'}(\R^k))} \\
     &\les  2^{j\frac{1+d}{2}-\frac{d-k}{p_1}-\frac{k}{p_2}}    
     \|f\|_{\mathcal L^{p_1}_y(L^{p_2}(\R^k))}   \\
     &= 2^{j\frac{1+d}{2}-\frac{d-k}{p_1}-\frac{k}{p_2}}    
     \|f\|_{\mathcal L^{p_1}_y(\mathcal L^{p_2}_z)}. 
   \end{aligned}
   \end{align}
   In the second line we made use of Corollary~4.5 in \cite{ChenSun2022} once more, and in the fourth line we
   used another nontrivial embedding of interpolation spaces from Theorem~2~(ii) in~\cite{Mandel2023}.
   
   \medskip
   
   Interchanging the order of integration, \eqref{eq:Tj_estimate} allows to prove in a similar way
   \begin{align} \label{eq:InterpolatedBoundsII}
     \|T_j f\|_{(\mathcal L^{p_1}_y)'(\mathcal L^{p_2}_z)'}
     \les 2^{j\frac{1+d}{2}-\frac{d-k}{p_1}-\frac{k}{p_2}}    
     \|f\|_{\mathcal L^{p_1}_y(\mathcal L^{p_2}_z)}. 
   \end{align}
   We thus obtain
   \begin{align} \label{eq:InterpolatedBoundsIII}
     \|T_j f\|_{(\mathcal L^{p_1}_y)'(\mathcal L^{p_2}_z)'}
     +\|T_j f\|_{(\mathcal L^{p_2}_z)'(\mathcal L^{p_1}_y)'}
     \les 2^{j\frac{1+d}{2}-\frac{d-k}{p_1}-\frac{k}{p_2}}    
     \|f\|_{\mathcal L^{p_2}_z(\mathcal L^{p_1}_y)}, 
   \end{align}
   which proves the claim given the formula for the norm of $X_{\vec p}'$.
\end{proof}

It will be convenient to use a simplified version of this result coming from the choice $p_1=p_2$. 
Recall $m=\min\{k,d-k\}$.

\begin{cor}\label{cor:Xpestimates}
  In the case $1\leq p<\frac{2m}{m+1}$ we have
  $$
    \|T_j f\|_{L^{p'}(\R^d)} \les 2^{j(\frac{1+d}{2}-\frac{d}{p})} \|f\|_{L^{p}(\R^d)}. 
  $$
  If $p=\frac{2m}{m+1}$ and $m<\frac{d}{2}$, then we have for all
  $f\in\mathcal S_{G_k}(\R^d)$ 
  $$
    \|T_j f\|_{L^{p',\infty}(\R^d)} 
    \les 2^{j(\frac{1+d}{2}-\frac{d}{p})} \|f\|_{L^{p,1}(\R^d)}. 
  $$
  If $p=\frac{2m}{m+1}$ and $m=\frac{d}{2}$, then we have for all
  $f\in\mathcal S_{G_k}(\R^d)$ 
  $$
    \|T_j f\|_{L^{p',\infty}(\R^m)(L^{p',\infty}(\R^m))} 
    \les 2^{j(\frac{1+d}{2}-\frac{d}{p})} \|f\|_{L^{p,1}(\R^m)(L^{p,1}(\R^m))}. 
  $$  
\end{cor}
\begin{proof}
  We first consider the non-endpoint case $p_1=p_2:=p\in [1,\frac{2m}{m+1})$ and $\vec p:=(p_1,p_2)=(p,p)$.
  Then Lemma~\ref{lem:XpEstimates} applies due to 
  $1\leq p_1,p_2<\frac{2m}{m+1} = \min\{\frac{2k}{k+1},\frac{2(d-k)}{d-k+1}\}$, which proves the claim
  because of $X_{\vec p}=L^p(\R^d)$ and $X_{\vec p}'=L^{p'}(\R^d)$. 
  In the case $p_1=p_2=p=\frac{2m}{m+1}$ and $m<\frac{d}{2}$
  we may w.l.o.g. assume $m=k<d-k$ and obtain the embeddings
  \begin{align} \label{eq:Xvecp} 
    \begin{aligned}
      X_{\vec p} &\supset \mathcal L^p_y(\mathcal L^p_z)= L^p(\R^{d-k})(L^{p,1}(\R^k)), \\
      X_{\vec p}' &\subset (\mathcal L^p_y)'(\mathcal L^p_z)'= L^{p'}(\R^{d-k})(L^{p',\infty}(\R^k)).
  \end{aligned}
  \end{align} 
  Exploiting further embeddings 
\begin{align}\label{eq:inclusions}
  \begin{aligned}
    L^p(\R^{d-k})(L^{p,r}(\R^k)) &\supset L^{p,r}(\R^d) \qquad\text{if }0<r\leq p\leq \infty, \\
    L^q(\R^{d-k})(L^{q,s}(\R^k)) &\subset L^{q,s}(\R^d) \qquad\text{if }0<q\leq s\leq \infty      
  \end{aligned}
\end{align}
 for $r=1,s=\infty$ from \cite[Lemma~3]{Mandel2023}  yields 
 \begin{align*}
      X_{\vec p} \supset  L^{p,1}(\R^d), \qquad
      X_{\vec p}' \subset L^{p',\infty}(\R^d).
  \end{align*}
  So, the estimates from Lemma~\ref{lem:XpEstimates} imply  
  \begin{align*}
     \|T_j f\|_{L^{p',\infty}(\R^d)}
     \les \|T_j f\|_{X_{\vec p}'}  
     \les 2^{j(\frac{1+d}{2}-\frac{d}{p})} \|f\|_{X_{\vec p}}  
     \les 2^{j(\frac{1+d}{2}-\frac{d}{p})} \|f\|_{L^{p,1}(\R^d)}
  \end{align*}
  and the claim is proved.
\end{proof}

Note that in the exceptional case $m=\frac{d}{2}, p=\frac{2m}{m+1}$ embeddings analogous
to~\eqref{eq:inclusions} do in general not hold, see the Lemma
in~\cite{Cwikel1974}. In order to uncover the mapping properties of $T$, which are essentially determined by
those of the linear operator $\sum_{j=1}^\infty T_j$, we need to interpolate the estimates for $T_j$ from
Corollary~\ref{cor:Xpestimates} with the ones from
Lemma~\ref{lem:FourierEstimate}.
%More explicitly, if $1<p\leq \infty$ then we have
%$$ 
%  f\in L^{p',\infty}(\R^{d-k})(L^{p',\infty}(\R^k)),\; f\notin L^{p',\infty}(\R^d)
%  \quad\text{for }f(y,z)=|y|^{-\frac{d-k}{p'}}|z|^{-\frac{k}{p'}}.
%$$ 
%As a consequence, the proof above does not work in this case. 
In the most difficult case $m=\frac{d}{2}$
we use recently established identities for
real interpolation spaces between mixed Lorentz spaces: Corollary~1 in \cite{Mandel2023} gives for
$1<p_0\neq p_1<\infty$ and $1\leq r,q\leq \infty$ 
\begin{align} \label{eq:md/2Estimate}  
    \Big(L^{p_0,r}(\R^m)(L^{p_0,r}(\R^m)),L^{p_1}(\R^{2m})\Big)_{\theta,q} = L^{p_\theta,q}(\R^{2m})      
\end{align}
whenever $\frac{1}{p_\theta}= \frac{1-\theta}{p_0}+\frac{\theta}{p_1}$ and $0<\theta<1$. 
 
\medskip

\textbf{Proof of Theorem~\ref{thm:RestrictionExtension}:} We have to show that the
Restriction-Extension operator $T:L^p_{G_k}(\R^d)\to L^q_{G_k}(\R^d)$ is bounded if
$$ 
    \min\left\{\frac{1}{p},\frac{1}{q'}\right\} > \frac{d+1}{2d},\qquad 
    \frac{1}{p}-\frac{1}{q} \geq  \frac{2}{d+m}.
$$  
Recall that the boundary of this set if the pentagon $ABDD'B'$ in Figure~\ref{fig:Riesz}.
Since the claim for $m=\min\{k,d-k\}=1$ is already covered by Guti\'{e}rrez' result \cite[Theorem~6]{Gutierrez2004},
  we focus on $m\geq 2$.
  We   first consider the case $m<\frac{d}{2}$. 
  The trivial estimates for $Tf=\mathcal J\ast (\tau(|D|)f)$ exploit the pointwise bounds
  of the kernel function $\mathcal J$ as well as Young's convolution inequality in Lorentz spaces. More
  precisely,
  \begin{equation}\label{eq:TrivialEstimatesT}
    \|Tf\|_{\frac{2d}{d-1},\infty} \leq \|\mathcal J\|_{\frac{2d}{d-1},\infty} \|f\|_1,\quad
    \|Tf\|_{\infty} \leq \|\mathcal J\|_{\frac{2d}{d-1},\infty} \|f\|_{\frac{2d}{d+1},1},\quad
    \|Tf\|_{\infty} \leq \|\mathcal J\|_{\infty} \|f\|_1.
  \end{equation}
  Note that these estimates are located at 
  the  corners A,B,B' in Figure~\ref{fig:Riesz}.
  By real interpolation, it remains to prove restricted weak-type estimates at the   corners
  D,D' situated on the line $\frac{1}{p}-\frac{1}{q}=\frac{2}{d+m}$.
  Thanks to Proposition~\ref{prop:RepresentationFormula} and \eqref{eq:defChi},\eqref{eq:defPhij} it is
  sufficient to prove these estimates for the linear operator $\sum_{j=1}^\infty T_j$. 
  From Lemma~\ref{lem:FourierEstimate} and Corollary~\ref{cor:Xpestimates} we know 
  $$
    \text{(i)}\; \|T_j f\|_2 \les 2^{j\frac{1}{2}}\|f\|_{p_{ST}},\quad
    \text{(ii)}\; \|T_j f\|_{p_{ST}'} \les 2^{j\frac{1}{2}}\|f\|_2,\quad 
    \text{(iii)}\; \|T_j f\|_{\frac{2m}{m-1},\infty} \les 2^{j\frac{m-d}{2m}}\|f\|_{\frac{2m}{m+1},1}.
  $$
  Bourgain's interpolation scheme~\cite[p.604]{CarberySeegerETC1999} with interpolation parameter
  $\theta=\frac{m}{d}\in (0,1)$ applied to (i),(iii) and (ii),(iii), respectively, gives
  $$
    \left\|\sum_{j=1}^\infty T_j f\right\|_{q,\infty} \les \|f\|_{p,1} 
    \quad\text{where }
    \begin{cases}
      \frac{1}{p} = \frac{1-\theta}{p_{ST}}+\frac{\theta}{\frac{2m}{m+1}},\;
      \frac{1}{q} = \frac{1-\theta}{2}+\frac{\theta}{\frac{2m}{m-1}} \quad\text{or }\\
      \frac{1}{p} = \frac{1-\theta}{2}+\frac{\theta}{\frac{2m}{m+1}},\;
      \frac{1}{q} = \frac{1-\theta}{p_{ST}'}+\frac{\theta}{\frac{2m}{m-1}}.
    \end{cases}
  $$
  This is equivalent to
  \begin{equation} \label{eq:endpointestimates}
    \left\|\sum_{j=1}^\infty T_j f\right\|_{q,\infty} \les \|f\|_{p,1} 
    \quad\text{where }
    \begin{cases}
      \frac{1}{q} = \frac{d-1}{2d},\; \frac{1}{p}-\frac{1}{q}=\frac{2}{d+m}  \quad\text{or }\\
      \frac{1}{p} = \frac{d+1}{2d},\; \frac{1}{p}-\frac{1}{q}=\frac{2}{d+m}.
    \end{cases}
  \end{equation}  
  Since these conditions on $p,q$ describe the two corners $D$, $D'$ in the Riesz diagram, this  
  finishes the proof in the case $m<\frac{d}{2}$.
  
  \medskip
  
  In the case $m=\frac{d}{2}$ we still have \eqref{eq:TrivialEstimatesT} and (i),(ii), but (iii) needs to be
  replaced by the estimate 
  $$
    \text{(iii)'}\; \|T_j f\|_{L^{\frac{2m}{m-1},\infty}(\R^m)(L^{\frac{2m}{m-1},\infty}(\R^m))} \les
    2^{j\frac{m-d}{2m}}\|f\|_{L^{\frac{2m}{m+1},1}(\R^m)(L^{\frac{2m}{m+1},1}(\R^m))},
  $$ 
  see Lemma~\ref{lem:XpEstimates}. Once more, the interpolation scheme
  from~\cite[p.604]{CarberySeegerETC1999} shows for $\theta=\frac{m}{d}=\frac{1}{2}$
  \begin{align*}
    \left\|\sum_{j=1}^\infty T_j f\right\|_{
    \big(L^{\frac{2m}{m-1},\infty}(\R^m)(L^{\frac{2m}{m-1},\infty}(\R^m)),L^2(\R^d)\big)_{\theta,\infty}}
    &\les \|f\|_{
    \big(L^{\frac{2m}{m+1},1}(\R^m)(L^{\frac{2m}{m+1},1}(\R^m)),L^{p_{ST}}(\R^d)\big)_{\theta,1}} \\
    \left\|\sum_{j=1}^\infty T_j f\right\|_{
    \big(L^{\frac{2m}{m-1},\infty}(\R^m)(L^{\frac{2m}{m-1},\infty}(\R^m)),L^{p_{ST}'}(\R^d)\big)_{\theta,\infty}}
    &\les \|f\|_{
    \big(L^{\frac{2m}{m+1},1}(\R^m)(L^{\frac{2m}{m+1},1}(\R^m)),L^2(\R^d)\big)_{\theta,1}} 
  \end{align*} 
  The identity \eqref{eq:md/2Estimate} for $r=q=1$ resp. $r=q=\infty$ shows that this is equivalent to 
  \eqref{eq:endpointestimates}. So this finishes the sufficiency proof also in the case $m=\frac{d}{2}$.
  
  \medskip
  
  Our conditions on $p,q$ are in fact optimal. Indeed, the
  constant density on the unit sphere implies the necessity of $\frac{1}{q}<\frac{d-1}{2d}$ and hence
  $\frac{1}{p}>\frac{d+1}{2d}$ by duality.
%   The necessity of  $\frac{1}{p}-\frac{1}{q}\geq \frac{2}{d+m}$ follows from
%   $$
%     T=S^*S\quad\text{for}\quad  
%     S:L^p_{G_k}(\R^d)\to L^2(\mathbb{S}^{d-1}),\; f\mapsto \hat f|_{\mathbb{S}^{d-1}}
%   $$ 
%   and the fact that the exponent
%   $p=\frac{2(d+m)}{d+m+2}$ is largest possible with this property by Theorem~1.3(i) in \cite{ManDOS}.
  Moreover, if   $T:L^p_{G_k}(\R^d)\to L^q_{G_k}(\R^d)$ was bounded for any pair
  $(p,q)$ with $\mu:=\frac{1}{p}-\frac{1}{q}<\frac{2}{d+m}$, 
  then, by symmetry, $T:L^{q'}_{G_k}(\R^d)\to L^{p'}_{G_k}(\R^d)$ would be bounded as well. So the
  Riesz-Thorin Theorem would imply the boundedness of $T:L^{\tilde p}(\R^d)\to L^{\tilde
  p'}(\R^d)$ with $\frac{1}{\tilde p}=\frac{\mu+1}{2}<\frac{d+m+2}{2(d+m)}$. 
  In view of
  $$
    T=S^*S\quad\text{for}\quad
    S:L^{\tilde p}_{G_k}(\R^d)\to L^2(\mathbb{S}^{d-1}),\; f\mapsto \hat f|_{\mathbb{S}^{d-1}}
  $$
  this would in turn imply that $S:L^{\tilde p}_{G_k}(\R^d)\to L^2(\mathbb{S}^{d-1})$ is bounded. However, the
  exponent $\tilde p=\frac{2(d+m)}{d+m+2}$ is largest possible for this inequality by Theorem~1.3(i) in
  \cite{ManDOS}, a contradiction. So the assumption was false and $\frac{1}{p}-\frac{1}{q}\geq \frac{2}{d+m}$
  is proved to be necessary.
\qed

% \begin{itemize}
%   \item Compare to the radial case.
% \item $p_1\neq p_2$ for $G_k$-Stein-Tomas 
% \item Interpolation in mixed norm spaces: Theorem 7.2 (Riesz-Thorin) and 
% Theorem 7.1 (Stein) in Benedek-Panzone \cite{BenedekPanzone1961}
% \item See Grey, Sinnamon Theorem 3.1. \cite[Theorem II.1b]{BenedekPanzone1961} 
% \item Real interpolation of mixed norm $L^p$: Section 4 in [Hardy-Littlewood-Sobolev Inequality on
% Mixed-Norm Lebesgue Spaces]  
% \end{itemize}
% 
%  \r{TODO: $m=d/2$ counterexample! \cite{SunWang2019} THeorem 2.22:
%  $$
%    \|f\|_r \les \|f\|_{L^{p,\infty}(L^{p,\infty})}^\theta \|f\|_q^{1-\theta}
%    \quad \frac{1}{r} = \frac{\theta}{p}+\frac{1-\theta}{q}
%  $$
%  Does this mean, by Theorem 3.9.1 in \cite{BerghLoefstrom1976} \\
%  \cite{Larsson2004}
%  }
 
\section{$G_k$-symmetric Limiting Absorption Principles}

In this section we carry out a related analysis to prove $L^p_{G_k}$-$L^q_{G_k}$-Limiting
Absorption Principles for elliptic (pseudo-)differential operators $P(|D|)$.
Here the task is to determine $p,q\in [1,\infty]$ such that the linear map 
$$
  (P(|D|)+i0)^{-1}u := \lim_{\eps\to 0^+} \mathcal F^{-1}\left( \frac{\hat u}{P(|\cdot|)+i\eps}\right)
$$ 
is well-defined and bounded as an operator from $L^p_{G_k}(\R^d)$ into $L^q_{G_k}(\R^d)$. 
In the non-symmetric setting optimal bounds were found by Kenig-Ruiz-Sogge~\cite{KenigRuizSogge1987} (for
$q=p'$) and Guti\'{e}rrez~\cite{Gutierrez2004} in the special case of the Helmholtz operator
$-\Delta-1$, i.e., $P(r)=r^2-1$.
%Their  methods
%can be adapted to more general pseudodifferential operators $P(|D|)$ in a straightforward manner.  
Our aim is to prove a $G_k$-symmetric counterpart of this result that even applies to a more general class
of symbols. In particular, we significantly improve two earlier contributions~\cite{WethYesil2021,ManDOS}
dealing with the Helmholtz operator assuming $G_k$-symmetry.  
Our assumptions on the symbol are as follows:
\begin{itemize}
  \item[(A)] $P$ is smooth on $[0,\infty)$, $P(0)\neq 0$ and $P$ has finitely many simple zeros
  $r_1,\ldots,r_M$ on $(0,\infty)$. Moreover, there are $R,\eps,s>0$ such that  
  $$
    \left|\frac{d^k}{dr^k}\left(\frac{r^s}{P(r)} \right)\right|
	\les  r^{-k-\eps}
    \qquad\text{for  }  r\geq R \text{ where } k:= \lfloor d/2\rfloor+1.
  $$
\end{itemize}
We emphasize that (A) holds, e.g., for all polynomials $P$ of degree $s\in\N$ with $P(0)\neq 0$ such that all
positive zeros are simple, but also for other physically relevant symbols such as relativistic Schr\"odinger
operators $P(|D|)=(\mu+|D|^2)^{s/2}- \Lambda$ with $\Lambda>\mu>0$. 

\medskip

For symbols $P$ as in (A) we  can choose $\delta>0$ small enough and smooth nonnegative functions
$\chi_0,\ldots,\chi_m$ such that
\begin{align}\label{eq:chim}
  \begin{aligned}
  &\chi_m\in C_0^\infty(\R_+),\qquad \chi_0(r):= 1- \sum_{m=1}^M \chi_m(r)\quad \text{with} \\ 
  &|P'(r)|>0 \text{ on }\supp(\chi_m) \quad\text{where }\chi_m(r)=1 \text{ for
  }|r-r_m|\leq\delta.
\end{aligned}
\end{align}
The assumption regarding the asymptotic behaviour of $P$ at infinity allows to make use of well-known
Bessel potential estimates. In fact, combining (A) with Proposition~1 in~\cite{Mandel2022} we find
that
\begin{equation}\label{eq:multiplier}
  \xi\mapsto \chi_0(|\xi|)(1+|\xi|^2)^{s/2}P(|\xi|)^{-1}
  \quad\text{is an $L^\mu(\R^d)$-multiplier for all }\mu\in [1,\infty].
\end{equation}
In the following we use the principal value operator given by 
$$
  p.v. \int_0^\infty \frac{g(r)}{P(r)} \,dr 
  = \lim_{\delta\to 0^+} \int_{|P(r)|>\delta}  \frac{g(r)}{P(r)} \,dr.
$$
This is motivated by the Plemelj-Sokhotsky formula.

\begin{prop} \label{prop:PlemSokh}
  Assume (A) and $h\in C^1(\R_+)$. Then 
  $$
    \lim_{\eps\to 0^+} \int_0^\infty \frac{\chi_m(r) h(r)}{P(r)+i\eps}\,dr 
    = -\pi i h(r_m) P'(r_m)^{-1} + p.v. \int_0^\infty \frac{\chi_m(r)h(r)}{P(r)}\,dr.
  $$
\end{prop}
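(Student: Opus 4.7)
The plan is to establish this localized Plemelj--Sokhotsky identity by decomposing the integrand via the elementary algebraic identity
\[
\frac{1}{P(r)+i\eps}=\frac{P(r)}{P(r)^2+\eps^2}-i\,\frac{\eps}{P(r)^2+\eps^2}
\]
and analysing real and imaginary parts separately. The role of assumption~(A) and of the construction of $\chi_m$ in~\eqref{eq:chim} is precisely to guarantee that $P'$ is bounded away from zero and of constant sign on $\supp(\chi_m)$, so that $u=P(r)$ defines a $C^1$ diffeomorphism from an open neighbourhood of $\supp(\chi_m)$ onto an open neighbourhood of $0$ in $\R$, with smooth inverse $P^{-1}$.

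For the imaginary part, performing the substitution $u=P(r)$ transforms the integral (up to bookkeeping of the Jacobian sign, which is harmless since $P'$ has constant sign near $r_m$) into
\[
\int_{\R}\frac{\eps}{u^2+\eps^2}\,g(u)\,du,\qquad g(u):=\chi_m(P^{-1}(u))\,h(P^{-1}(u))\,P'(P^{-1}(u))^{-1},
\]
with $g\in C^1_0(\R)$ and $g(0)=h(r_m)/P'(r_m)$. Since $\frac{1}{\pi}\cdot\frac{\eps}{u^2+\eps^2}$ is a standard approximate identity for $\delta_0$ on $\R$, the integral converges to $\pi g(0)=\pi h(r_m)/P'(r_m)$, so the imaginary contribution equals $-\pi i\,h(r_m)/P'(r_m)$ as claimed.

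For the real part, I would fix $\delta_0>0$ small enough and split
\[
\int_0^\infty\frac{P(r)\chi_m(r)h(r)}{P(r)^2+\eps^2}\,dr
=\int_{|P(r)|>\delta_0}\cdots\,dr+\int_{|P(r)|\leq\delta_0}\cdots\,dr.
\]
On the first region the integrand is dominated by $\delta_0^{-1}\chi_m|h|\in L^1$, so dominated convergence produces the corresponding part of the principal value. On the second region the substitution $u=P(r)$ reduces the problem to proving
\[
\lim_{\eps\to 0^+}\int_{-\eta}^{\eta}\frac{u\,g(u)}{u^2+\eps^2}\,du
=\mathrm{p.v.}\int_{-\eta}^{\eta}\frac{g(u)}{u}\,du
\]
for some small $\eta>0$, which is done by writing $g(u)=g(0)+(g(u)-g(0))$: the constant piece vanishes by the oddness of $u/(u^2+\eps^2)$ on the symmetric interval, while the remainder is handled by dominated convergence, using $|g(u)-g(0)|/|u|\les\|g'\|_\infty$ (mean value theorem applied to the $C^1$-function $g$) together with $|u/(u^2+\eps^2)|\leq 1/|u|$. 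Recombining the two regions yields the full principal-value integral.

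There is no serious obstacle: the argument is essentially the standard distributional identity $\frac{1}{x+i0}=\mathrm{p.v.}\,\frac{1}{x}-i\pi\delta_0(x)$ pulled back along the local diffeomorphism $u=P(r)$, and the simplicity of the zero $r_m$ (i.e.\ $P'(r_m)\neq 0$, enforced by (A)) is precisely what makes this pull-back legitimate. Summing the real and imaginary contributions produces the claimed formula.
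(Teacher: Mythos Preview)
Your proposal is correct and follows essentially the same route as the paper: both arguments reduce to the classical one-variable identity $\tfrac{1}{x+i0}=\mathrm{p.v.}\,\tfrac{1}{x}-i\pi\delta_0$ via the local $C^1$ diffeomorphism $u=P(r)$ afforded by the simplicity of the zero $r_m$, and both control the remainder by the $C^1$ bound $|g(u)-g(0)|\les |u|$. The only cosmetic difference is that the paper substitutes first and then subtracts the constant $\tilde h(0)$ from the full complex integrand, whereas you separate real and imaginary parts before substituting; this is a matter of bookkeeping, not of substance.
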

\begin{proof}
  For small $\tau>0$ we set $I_\tau := \{r\in \supp(\chi_m):|P(r)|<\tau\}$. 
  By assumption \eqref{eq:chim} we have $|P'|>0$ on $\supp(\chi_m)$ and in particular on $I_\tau$. So we may
  define $$
   \tilde h(s):= \chi_m(P^{-1}(s)) h(P^{-1}(s))(P^{-1})'(s)
  $$ 
  where $P^{-1}:(-\tau,\tau)\to I_\tau$ denotes the local inverse of $P|_{I_\tau}$, so  $P^{-1}(0)=r_m$.
  Then the claim follows from 
  \begin{align*}
      \lim_{\eps\to 0^+} \int_0^\infty \frac{\chi_m(r)h(r)}{P(r)+i\eps}\,dr
      &=  \lim_{\eps\to 0^+} \int_{I_\tau} \frac{\chi_m(r)h(r)}{P(r)+i\eps}\,dr 
      + \int_{I_\tau^c} \frac{\chi_m(r)h(r)}{P(r)}\,dr\\
      &=  
      \lim_{\eps\to 0^+} \int_{-\tau}^{\tau} \frac{\tilde h(s)}{s+i\eps}\,ds
      + \int_{I_\tau^c} \frac{\chi_m(r)h(r)}{P(r)}\,dr \\
      &=  
      \lim_{\eps\to 0^+} \left[\, \tilde h(0) \int_{-\tau}^\tau \frac{1}{s+i\eps}\,ds +
      \int_{-\tau}^\tau \frac{\tilde h(s)-\tilde h(0)}{s+i\eps}\,ds\, \right]
      + \int_{I_\tau^c} \frac{\chi_m(r)h(r)}{P(r)}\,dr\\
      &=  \tilde h(0)  \lim_{\eps\to 0^+}
      \int_{-\tau}^\tau \frac{s-i\eps}{s^2+\eps^2}\,ds 
      +  \int_{-\tau}^\tau  \frac{\tilde h(s)-\tilde h(0)}{s}\,ds 
      + \int_{I_\tau^c} \frac{\chi_m(r)h(r)}{P(r)}\,dr\\
      &= \tilde h(0)  \lim_{\eps\to 0^+}
      \int_{-\tau\eps^{-1}}^{\tau\eps^{-1}} \frac{t-i}{t^2+1}\,dt
      +  \lim_{\delta\to 0^+}  \int_{\delta<|s|<\tau} \frac{\tilde h(s)}{s}\,ds
      + \int_{I_\tau^c} \frac{\chi_m(r)h(r)}{P(r)}\,dr \\
      &=   -\pi i \tilde h(0) +  \lim_{\delta\to 0^+}  \int_{|P(r)|>\delta}  \frac{\chi_m(r)h(r)}{P(r)}\,dr \\
      &= -\pi i h(r_m) P'(r_m)^{-1} + p.v. \int_0^\infty \frac{\chi_m(r)h(r)}{P(r)}\,dr.
  \end{align*}
%   
%   \begin{align*}
%       \lim_{\eps\to 0^+} \int_0^\infty \frac{\chi_m(r)h(r)}{P(r)+i\eps}\,dr
%       &= \lim_{\eps\to 0^+} \int_{P(\supp(\chi_m))} \frac{\tilde h(s)}{s+i\eps}\,ds \\
%       &= \lim_{\eps\to 0^+} \left[\, \tilde h(0) \int_{P(\supp(\chi_m))} \frac{1}{s+i\eps}\,ds +
%       \int_{P(\supp(\chi_m))} \frac{\tilde h(s)-\tilde h(0)}{s+i\eps}\,ds\, \right]\\
%       &= \tilde h(0)  \lim_{\eps\to 0^+} \int_{P(\supp(\chi_m))} \frac{s-i\eps}{s^2+\eps^2}\,ds +
%       \int_{P(\supp(\chi_m))} \frac{\tilde h(s)-\tilde h(0)}{s}\,ds \\
%       &= \tilde h(0)  \lim_{\eps\to 0^+} \int_{\eps^{-1}P(\supp(\chi_m))} \frac{t-i}{t^2+1}\,dt 
%       +   p.v. \int_{P(\supp(\chi_m))} \frac{\tilde h(s)}{s}\,ds \\
%       &= \tilde h(0)   \int_{\R} \frac{t-i}{t^2+1}\,dt 
%       +   p.v. \int_{\R} \frac{\tilde h(s)}{s}\,ds \\
%       &=   -\pi i \tilde h(0) +    p.v. \int_{P(\supp(\chi_m))} \frac{\tilde h(s)}{s}\,ds.
%   \end{align*}
%   and the claim follows.
\end{proof}

In the following let $\tau\in C_0^\infty(\R_+)$ denote  a function  such that $\tau$
is identically 1 on $\supp(\chi_m)$ for all $m=1,\ldots,M$, in particular $(1-\chi_0)\tau=1-\chi_0$.

\begin{prop}\label{prop:RepresentationFormulaII}
  Assume (A). Then 
  $$
    (P(|D|)+i0)^{-1}f = Rf + \sum_{m=1}^M \Phi^m \ast (\tau(|D|)f)
  $$ 
  where $R:L^p(\R^d)\to L^q(\R^d)$ is a bounded linear operator whenever $p,q\in [1,\infty]$ satisfy
  $$
    0\leq \frac{1}{p}-\frac{1}{q}\leq \frac{s}{d}
    \quad\text{and}\quad 
    \left(\frac{1}{p},\frac{1}{q}\right)\notin\left\{\left(1,\frac{d-s}{d}\right), 
    \left(\frac{s}{d},0\right)\right\}
  $$
  and where the smooth kernel functions are given by 
  \begin{align*}
    %\text{If }m=0&:\quad \Phi^0(z) 
    % =  \int_0^\infty \frac{\chi_0(r) r^{d-1}}{P(r)} \mathcal J(r|z|) \,dr, \\
    %\text{If }m\geq 1&:\quad 
     \Phi^m(z) 
    = - i\pi r_m^{d-1}P'(r_m)^{-1} \mathcal J(r_m|z|) + p.v. \int_0^\infty \frac{\chi_m(r) r^{d-1}}{P(r)}
    \mathcal J(r|z|) \,dr.
  \end{align*}
\end{prop}
\begin{proof}
  We define $Rf:= \mathcal F^{-1}(\chi_0(|\cdot|)P(|\cdot|)^{-1}\hat f)$. From~\eqref{eq:multiplier} and
  well-known Bessel potential estimates we get 
%   $$
%     \|Rf\|_q
%     \les \left\| \mathcal F^{-1}( (1+|\cdot|^2)^{-s/2} \hat f) \right\|_q
%     \les \|f\|_p.
%   $$
  $$
    \|Rf\|_q
    \les \| \mathcal F^{-1}( (1+|\cdot|^2)^{-s/2} \hat f)\|_q
    \les \|f\|_p
  $$
  thanks to our assumptions on $p,q$.
  So we have  $(P(|D|)+i0)^{-1}f = Rf+\Phi\ast (\tau(|D|)f)$ where 
    \begin{align*}
    \Phi(z) 
    &= \lim_{\eps\to 0^+} \mathcal F^{-1}\left(\frac{1-\chi_0(|\cdot|)}{P(|\cdot|)+i\eps}\right)(z)  \\
    &\stackrel{\eqref{eq:chim}}{=} \lim_{\eps\to 0^+}  \sum_{m=1}^M (2\pi)^{-\frac{d}{2}} \int_{\R^d}
    \frac{\chi_m(|\xi|)}{P(|\xi|)+i\eps} e^{iz\cdot \xi}\,d\xi  \\
    &=  \sum_{m=1}^M \lim_{\eps\to 0^+} \int_0^\infty
    \frac{\chi_m(r)r^{d-1}}{P(r)+i\eps} \left((2\pi)^{-\frac{d}{2}} \int_{\mathbb S^{d-1}} e^{iz\cdot
    r\omega}\,d\sigma(\omega) \right) \,dr
    \\
    &\stackrel{\eqref{eq:defJ}}=  \sum_{m=1}^M \lim_{\eps\to 0^+} \int_0^\infty \frac{\chi_m(r)
    r^{d-1}}{P(r)+i\eps} \mathcal J(r|z|)\,dr .
  \intertext{The Plemelj-Sokhotsky formula from Proposition~\ref{prop:PlemSokh} gives}
   \Phi(z)
    &=   \sum_{m=1}^M  
      \left( -i\pi    r_m^{d-1} P'(r_m)^{-1} \mathcal J(r_m|z|)
      + p.v.  \int_0^\infty \frac{\chi_m(r) r^{d-1}}{P(r)} \mathcal J(r|z|)\,dr
    \right),
  \end{align*}
  which proves the claim.
\end{proof}

So we have shown that the mapping properties of $(P(|D|)+i0)^{-1}$ are determined by the mapping properties
of $R$ and the convolution operators with the kernels $\Phi^m$. Our next aim is to show that each $\Phi^m$
has, qualitatively, the same asymptotic expansion as the function $\mathcal J(|\cdot|)$. This provides the
link to the Restriction-Extension operator studied in~Theorem~\ref{thm:RestrictionExtension}.   

\begin{prop}\label{prop:PhimAsymptotics}
  Assume (A) and $L\in\N,m\in\{1,\ldots,M\}$. Then  there are
  $\alpha_{lm},\beta_{lm}\in\C$ for $l\in\{0,\ldots,L-1\}$ such that 
  $$
    \Phi^m(z) = \sum_{l=0}^{L-1} |z|^{\frac{1-d}{2}-l}\left(\alpha_{lm}  
     e^{ir_m |z|} + \beta_{lm} e^{-ir_m|z|}\right) + O(|z|^{\frac{1-d}{2}-L}) 
     \quad\text{as }|z|\to\infty.
  $$
\end{prop}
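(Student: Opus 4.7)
The plan is to decompose $\Phi^m=\Phi^m_1+\Phi^m_2$ into the residue part
$$
\Phi^m_1(z):=-i\pi r_m^{d-1}P'(r_m)^{-1}\mathcal J(r_m|z|)
$$
and the principal value part
$$
\Phi^m_2(z):=\mathrm{p.v.}\int_0^\infty\frac{\chi_m(r)r^{d-1}}{P(r)}\mathcal J(r|z|)\,dr,
$$
and to expand each separately. For $\Phi^m_1$ the claim is immediate: plugging \eqref{eq:BesselJAsymptotics} with argument $r_m|z|$ directly produces a sum $\sum_l c_{lm}^{\pm}|z|^{\frac{1-d}{2}-l}e^{\pm ir_m|z|}$ with error $O(|z|^{\frac{1-d}{2}-L})$.

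The main work concerns $\Phi^m_2$. Here I would apply the finer decomposition \eqref{eq:decomposition}. The compactly supported piece $\mathcal J^1(r|z|)$ vanishes identically once $|z|$ is large enough, since $r$ is bounded away from $0$ on $\mathrm{supp}(\chi_m)$. For the two oscillatory pieces I would substitute the asymptotic expansion $\mathcal J^2(s)=\sum_{l=0}^{L-1}\alpha_l s^{-l}+O(s^{-L})$ stated after \eqref{eq:decomposition}, together with the analogous derivative bounds coming from its symbol-type behaviour at infinity. This reduces matters to the model integrals
$$
I_l^{\pm}(|z|):=\mathrm{p.v.}\int_0^\infty\frac{\chi_m(r)r^{\frac{d-1}{2}-l}}{P(r)}\,e^{\pm ir|z|}\,dr,\qquad l=0,\ldots,L-1,
$$
plus an $R_L$-remainder whose integrand is bounded uniformly on $\mathrm{supp}(\chi_m)$ by $|z|^{-L}$ (with analogous derivative estimates).

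To treat each $I_l^\pm$ I would factor $P(r)=(r-r_m)Q(r)$ with $Q\in C^\infty$ and $Q(r_m)=P'(r_m)\neq 0$ on $\mathrm{supp}(\chi_m)$, substitute $u=r-r_m$, and pull out the phase $e^{\pm ir_m|z|}$. Setting
$$
\tilde h_l(u):=\chi_m(u+r_m)(u+r_m)^{\frac{d-1}{2}-l}/Q(u+r_m)\in C_c^\infty(\R),
$$
the task reduces to the asymptotics of $\mathrm{p.v.}\int_{\R}\tilde h_l(u)u^{-1}e^{\pm iu|z|}\,du$. Using the distributional identity $\mathcal F(\mathrm{p.v.}\,u^{-1})=-i\pi\,\sign$ together with the Schwartz decay of $\widehat{\tilde h_l}$, these integrals expand as $\pm i\pi\tilde h_l(0)+O(|z|^{-N})$ for every $N\in\N$ as $|z|\to\infty$, much in the spirit of the proof of Proposition~\ref{prop:PlemSokh}. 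Combining this with the prefactors $|z|^{\frac{1-d}{2}-l}$ and collecting the contributions from $\Phi^m_1$ and $\Phi^m_2$ then yields coefficients $\alpha_{lm},\beta_{lm}\in\C$ of the claimed form (in fact a direct bookkeeping shows that the $e^{+ir_m|z|}$-contributions of $\Phi^m_1$ and $\Phi^m_2$ cancel, so that one of the two families vanishes, reflecting the choice of limit $\varepsilon\to 0^+$).

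The main obstacle I expect is the uniform control of the principal values as $|z|\to\infty$: one has to verify that the Schwartz tail of $\widehat{\tilde h_l}$ genuinely produces a $O(|z|^{-N})$ correction for every $N$, so that the sum over $l$ can be absorbed into the final $O(|z|^{\frac{1-d}{2}-L})$ error, and similarly that the uniform smoothness and derivative estimates available for $R_L(r|z|)$ on $\mathrm{supp}(\chi_m)$ translate into a principal value integral of uniform size $O(|z|^{-L})$. Once these two uniform controls are in place, the remainder of the argument is routine bookkeeping.
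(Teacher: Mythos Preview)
Your proposal is correct and follows essentially the same route as the paper: split $\Phi^m$ into residue and principal-value parts, discard the $\mathcal J^1$ contribution for large $|z|$, and extract the asymptotics of the oscillatory principal-value integral by isolating its value at $r=r_m$. The only organizational difference is that the paper keeps $\mathcal J^2(r|z|)$ inside a single amplitude $g_m(r,z)=\chi_m(r)(r-r_m)P(r)^{-1}r^{\frac{d-1}{2}}\mathcal J^2(r|z|)$ and uses an add--subtract with a cutoff $\eta$ (computing $\int_\R\rho^{-1}\eta(\rho)\sin(\rho|z|)\,d\rho=\pi+O(|z|^{-N})$ directly and bounding the smooth remainder by integration by parts), whereas you first expand $\mathcal J^2$ and then invoke $\mathcal F(\mathrm{p.v.}\,u^{-1})=-i\pi\,\mathrm{sign}$ together with the Schwartz decay of $\widehat{\tilde h_l}$; the paper's packaging has the minor advantage that no separate $R_L$-remainder estimate is needed, since the symbol bounds on $\mathcal J^2$ enter only through uniform boundedness of the $r$-derivatives of $g_m$.
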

\begin{proof}
  By Proposition~\ref{prop:RepresentationFormulaII} we have   
  \begin{align*}
     \Phi^m(z) 
    = - i\pi r_m^{d-1}P'(r_m)^{-1} \mathcal J(r_m|z|) + p.v. \int_0^\infty \frac{\chi_m(r) r^{d-1}}{P(r)}
    \mathcal J(r|z|) \,dr.
  \end{align*}
  So  Proposition~\ref{prop:AsymptoticsJ} provides the claimed asymptotic expansion for
  the imaginary part of $\Phi^m(z)$ and it remains to analyze the  principal value integral. To do this
  we use the decomposition
  \begin{equation*} 
    \mathcal J(s) = \mathcal J^1(s)+s^{\frac{1-d}{2}}\mathcal J^2(s)e^{is}+s^{\frac{1-d}{2}}\ov{\mathcal J^2(s)}e^{-is}
  \end{equation*}
  for $\mathcal J^1,\mathcal J^2$ as in~\eqref{eq:decomposition}. 
  Since $0\notin \supp(\chi_m)$ by \eqref{eq:chim} and $\mathcal J^1$ has compact support, we have $\mathcal
  J^1(r|z|)=0$ for large $|z|$ and $r\in\supp(\chi_m)$. So it remains to analyze the integrals
  involving $\mathcal J^2$.  Define  
  $$
    g_m(r,z):= \chi_m(r)(r-r_m) P(r)^{-1} r^{\frac{d-1}{2}}\mathcal J^2(r|z|)
    \quad\text{for }r>0, r\neq r_m. 
  $$  
  Since $r_m$ is a simple zero of $P$ with $|P'|>0$ on $\supp(\chi_m)$, this
  function is smooth on $(0,\infty)\times\R$.
  Choose an even function $\eta\in C_0^\infty(\R)$ with $\eta(z)=1$ for $z$ near 0. Then
  \begin{align*}
    &\; p.v. \int_0^\infty \frac{\chi_m(r) r^{d-1}}{P(r)} (r|z|)^{\frac{1-d}{2}}\mathcal J^2(r|z|)e^{ir|z|}
    \,dr  \\     
    &=  |z|^{\frac{1-d}{2}} \lim_{\delta\to 0} 
    \int_{|P(r)|>\delta} \frac{g_m(r,z)}{r-r_m}  e^{ir|z|}\,dr
    \\
    &=  |z|^{\frac{1-d}{2}}e^{ir_m|z|} \lim_{\delta\to 0} \int_{|P(r)|>\delta}
    \frac{g_m(r,z)}{r-r_m}  e^{i(r-r_m)|z|}\,dr \\
%     &=  |z|^{\frac{1-d}{2}}e^{ir_m|z|} \lim_{\delta\to 0} 
%        \Big[\; g_m(r_m,z) \int_{|P(r)|>\delta}
%     \frac{\eta(r-r_m)}{r-r_m}  e^{i(r-r_m)|z|}\,dr\;\Big]  \\ 
%     & + |z|^{\frac{1-d}{2}}e^{ir_m|z|} \lim_{\delta\to 0}  \int_{|P(r)|>\delta}
%      \frac{g_m(r,z)-g_m(r_m,z)\eta(r-r_m)}{r-r_m}  e^{i(r-r_m)|z|}\,dr    \\
    &=  |z|^{\frac{1-d}{2}}e^{ir_m|z|} \cdot i  g_m(r_m,z) \int_\R \frac{\eta(\rho)}{\rho}\sin(\rho|z|)\,d\rho
       \\ 
     &\; +  |z|^{\frac{1-d}{2}}e^{ir_m|z|}
    \int_\R \frac{g_m(r_m+\rho,z)-g_m(r_m,z)\eta(\rho)}{\rho}  e^{i\rho|z|}\,d\rho. 
%     &=  |z|^{\frac{1-d}{2}} \lim_{\delta\to 0} 
%     \left[
%     \tilde g_m(0,z)\int_{R>|s|>\delta}  \frac{\eta(s)}{s}  e^{iP^{-1}(s)|z|}\,ds
%     +  \int_{R>|s|>\delta} \frac{\tilde g_m(s,z)-\tilde g_m(0,z)}{s}  e^{iP^{-1}(s)|z|}\,ds 
%     \right]   \\
%     &=  |z|^{\frac{1-d}{2}} \lim_{\delta\to 0} 
%      \left[\tilde g_m(0,z)  \int_{R>|s|>\delta} \frac{\eta(s)}{s} \,ds
%      + \int_{R>|s|>\delta} \frac{\tilde g_m(s) - \tilde g_m(0)\eta(s)}{s}  \,ds \right] \\
%     &=  |z|^{\frac{1-d}{2}}  \int_{-R}^R  \frac{\tilde g_m(s) - \tilde g_m(0)\eta(s)}{s}  \,ds \\
%     %&=  |z|^{\frac{1-d}{2}} p.v. \int_{-R}^R  \frac{\varphi_m(s)}{s}  \,ds \\
%     &=  |z|^{\frac{1-d}{2}} \int_\R  \frac{g_m(r)-g_m(r_m)P'(r_m)^{-1}P'(r)\eta(P(r))}{P(r)}  \,dr   
  \end{align*}
  We first investigate the asymptotic expansion of the first term.
  The function $A(s):= \int_\R \rho^{-1}\eta(\rho)\sin(\rho s) \,d\rho$ has the property that its
  derivative $A'(s) =  \int_\R \cos(\rho s)\eta(\rho)\,d\rho$ is a Schwartz function. Moreover,
  \begin{align*}
    \lim_{z\to\infty} A(z)
    &= \lim_{z\to\infty} \int_0^z  A'(s)\,ds \\
    &= \lim_{z\to\infty}   \int_0^z   \int_\R  \cos(\rho s)\eta(\rho)\,d\rho  \,ds   \\
    &= \lim_{z\to\infty}   \int_0^z  \left( - \int_\R s^{-1}\sin(\rho s)\eta'(\rho)\,d\rho\right)
    \,ds   \\
    &= -2 \lim_{z\to\infty}    \int_0^\infty  \left(\int_0^z s^{-1} \sin(\rho s)\,ds\right) \eta'(\rho)\,d\rho  
    \\
    &= - 2 \lim_{z\to\infty}    \int_0^\infty  \left(\int_0^{\rho z} \tau^{-1}\sin(\tau)\,d\tau\right)
    \eta'(\rho)\,d\rho  \\
    &= -  2 \int_0^\infty  \eta'(\rho)\,d\rho  \cdot \frac{\pi}{2} \\
    &=  \pi.  
  \end{align*}  
  Hence, for any given $L\in\N$,
  $$
    \int_\R \frac{\eta(\rho)}{\rho}\sin(\rho|z|)\,d\rho
    = A(|z|)
    = \pi  -  \int_{|z|}^\infty A'(t)\,dt
    = \pi + O(|z|^{-L}).  
  $$
  This shows
  \begin{align*}
     |z|^{\frac{1-d}{2}}e^{ir_m|z|} \cdot i  g_m(r_m,z) \int_\R \frac{\eta(\rho)}{\rho}\sin(\rho|z|)\,d\rho
    &=  i|z|^{\frac{1-d}{2}}e^{ir_m|z|}  g_m(r_m,z)\cdot \left(\pi  + O(|z|^{-L})\right). 
  \end{align*}
  So it remains to examine the asymptotics of $g_m(r_m,z)=P'(r_m)^{-1}r_m^{\frac{d-1}{2}}\mathcal
  J^2(r_m|z|)$ with respect to $z$. So  
  $\mathcal J^2(s)= \sum_{l=0}^{L-1} \alpha_l s^{-l} + O(|s|^{-L})$ as $s\to\infty$ proves the desired
  asymptotic expansion for the term 
  $$
    |z|^{\frac{1-d}{2}}e^{ir_m|z|} \cdot i  g_m(r_m,z) \int_\R \frac{\eta(\rho)}{\rho}\sin(\rho|z|)\,d\rho.
  $$
   
  \medskip
  
  To prove this for the term
  $$
    |z|^{\frac{1-d}{2}}e^{ir_m|z|}
    \int_\R \frac{g_m(r_m+\rho,z)-g_m(r_m,z)\eta(\rho)}{\rho}  e^{i\rho|z|}\,d\rho
  $$
  note that the function 
  $$
    \tilde g_m(\rho,z) := \frac{g_m(r_m+\rho,z)-g_m(r_m,z)\eta(\rho)}{\rho}
  $$ 
  is smooth with compact support. Moreover, the smoothness of $P$, $|(\mathcal J^2)^{(l)}(s)|\les s^{-l}$
  for all $s\in\R$ and $l\in\N_0$ implies that all derivatives of $\tilde g_m(\rho,z)$ with respect to $\rho$ 
  are uniformly bounded with respect to $\rho$ and $z$. Hence, integration by parts gives 
  \begin{align*}
    \left|\int_\R  \tilde g_m(\rho,z) e^{i\rho|z|} \,d\rho\right|
    &= |z|^{-L} \left|\int_\R  \tilde g_m(\rho,z) \frac{d^L}{d\rho^L} (e^{i\rho|z|}) \,d\rho\right| \\
    &= |z|^{-L} \left|\int_\R  \frac{d^L}{d\rho^L}  \big(\tilde g_m(\rho,z)\big)  e^{i\rho|z|}  \,d\rho\right|
    \\
    &\les |z|^{-L}.
  \end{align*}   
  So the second term has the claimed asymptotic expansion as well, which finishes the proof.
\end{proof}

\medskip

\begin{thm}\label{thm:resolvent2}
  Assume $d\in\N, k\in\{1,\ldots,d-1\}$ and (A). Then $(P(|D|)+i0)^{-1}:L^p_{G_k}(\R^d)\to
  L^q_{G_k}(\R^d)$ is a bounded linear operator provided that $p,q\in [1,\infty]$ satisfy
  $$
    \min\left\{\frac{1}{p},\frac{1}{q'}\right\}>\frac{d+1}{2d},\quad 
    \frac{2}{d+m}\leq  \frac{1}{p}-\frac{1}{q}\leq \frac{s}{d},\quad 
    \left(\frac{1}{p},\frac{1}{q}\right)\notin 
    \left\{ \left(1,\frac{d-s}{d}\right),\left(\frac{s}{d},0\right)\right\}.
  $$
\end{thm} 
\begin{proof} 
  Proposition~\ref{prop:RepresentationFormulaII} shows 
 $$
   (P(|D|)+i0)^{-1}f=Rf + \sum_{m=1}^M \Phi^m\ast (\tau(|D|)f)
 $$
  where $R:L^p_{G_k}(\R^d)\to  L^q_{G_k}(\R^d)$ is bounded. So it remains 
to show that $f\mapsto \Phi^m\ast (\tau(|D|)f)$
  is bounded from $L^p_{G_k}(\R^d)$ to $L^q_{G_k}(\R^d)$ for exponents $p,q\in [1,\infty]$ such that
\begin{equation} \label{eq:pqconditions}
   \min\left\{\frac{1}{p},\frac{1}{q'}\right\}>\frac{d+1}{2d},\qquad 
    \frac{1}{p}-\frac{1}{q}\geq \frac{2}{d+m}.
  \end{equation}   
  By Proposition~\ref{prop:PhimAsymptotics} the kernel functions $\Phi^m$ satisfy 
  the asymptotic expansions 
  $$
    \Phi^m(z) = \sum_{l=0}^{L-1} |z|^{\frac{1-d}{2}-l}\left(\alpha_{lm}  
     e^{ir_m |z|} + \beta_{lm} e^{-ir_m|z|}\right) + O(|z|^{\frac{1-d}{2}-L}) 
     \quad\text{as }|z|\to\infty.
  $$
  This is qualitatively the same asymptotic expansion as the one of the kernel function $\mathcal J(|\cdot|)$
  of the Restriction-Extension operator, see 
  Proposition~\ref{prop:AsymptoticsJ} and~\eqref{eq:BesselJAsymptotics}. In order to carry out    
  an analogous analysis we define $\mathcal T f:= \Phi^m  \ast (\tau(|D|)f)$. As before, 
  we decompose this operator dyadically using the partition of unity introduced in \eqref{eq:defChi}.
  This leads to
  $\mathcal T=\sum_{j=0}^\infty \mathcal T_j$ where $\mathcal T_0:L^p_{G_k}(\R^d)\to L^q_{G_k}(\R^d)$ is
  bounded whenever $1\leq p\leq q\leq \infty$ and, for $j\geq 1$, 
  $$    
    \mathcal T_j f:= \Phi_j^m  \ast (\tau(|D|)f)
     \quad\text{with}\quad \Phi_j^m(z) :=  \chi(2^{-j}|z|) \Phi^m(z). 
  $$  
  Given the asymptotic expansion of $\Phi^m$ from above we find as in~\eqref{eq:TrivialEstimatesT}
  \begin{equation*}
    \|\mathcal Tf\|_{\frac{2d}{d-1},\infty} \les   \|f\|_1,\qquad
    \|\mathcal Tf\|_{\infty} \les \|f\|_{\frac{2d}{d+1},1},\qquad
    \|\mathcal Tf\|_{\infty} \les \|f\|_1.
  \end{equation*}
  The proof of Lemma~\ref{lem:FourierEstimate} also implies  
  $$
    \text{(i)}\quad \|\mathcal T_j f\|_2 \les 2^{j\frac{1}{2}}\|f\|_{p_{ST}},\qquad
    \text{(ii)}\quad \|\mathcal T_j f\|_{p_{ST}'} \les 2^{j\frac{1}{2}}\|f\|_2 
  $$  
  Finally, pointwise bounds for the corresponding kernel
  function analogous to Proposition~\ref{prop:KjBound} lead to 
  \begin{align*}
    &\text{(iii)}\quad \|\mathcal T_j f\|_{\frac{2m}{m-1},\infty} \les 
    2^{j\frac{m-d}{2m}}\|f\|_{\frac{2m}{m+1},1}
    &&\text{if } m<\frac{d}{2}, \\ 
    &\text{(iii)'}\quad \|\mathcal T_j f\|_{L^{\frac{2m}{m-1},\infty}(\R^m)(L^{\frac{2m}{m-1},\infty}(\R^m))}
    \les 2^{j\frac{m-d}{2m}}\|f\|_{L^{\frac{2m}{m+1},1}(\R^m)(L^{\frac{2m}{m+1},1}(\R^m))}
    &&\text{if } m=\frac{d}{2}.
  \end{align*}
  The same interpolation as in the proof of Theorem~\ref{thm:RestrictionExtension} shows    
  that $\mathcal T:L^p_{G_k}(\R^d)\to L^q_{G_k}(\R^d)$ is bounded whenever
  \eqref{eq:pqconditions} holds. This finishes the proof.
  \end{proof}

% \textbf{Proof of Corollary~\ref{cor:LAP}:}
% Assume $d\in\N, k\in\{1,\ldots,d-1\}$ and $f\in L^p_{G_k}(\R^d)$. Then the equation $P(|D|)u=f$ in $\R^d$
%   admits a solution $u\in L^q_{G_k}(\R^d)$ obtained via the Limiting Absorption principle provided that $p,q$ are
%   as in Theorem~\ref{thm:resolvent}.

\section{Appendix: An oscillatory integral estimate}

In this section we prove estimates for oscillatory integrals of the form
$$
  I_\lambda := \int_{-1}^1 \int_{-1}^1 (1-s_1^2)^{\alpha_1} (1-s_2^2)^{\alpha_2} m(s)
  \chi(\Psi(s)) e^{i\lambda \Psi(s)} \,ds_1\,ds_2
$$ 
that we used in the proof of Proposition~\ref{prop:KjBound}, see~\eqref{eq:oscint_estimate}.
Here and in the following we always assume $\alpha_1,\alpha_2>-1,\lambda\in\R, m\in C^\infty([-1,1]^2)$ and
$\chi\in C_0^\infty(\R)$ with $\supp(\chi)\subset [\frac{1}{2},2]$.
The phase function will be given by  
\begin{equation} \label{eq:Psi}
  \Psi(s):=\sqrt{A-B_1s_1-B_2s_2}
   \qquad\text{where }
   A,B_1,B_2\in\R,\; |B_1|+|B_2|\leq A.  
\end{equation}
We use the shorthand notation $I_\lambda \in\mathcal J_{\lambda,\alpha_1,\alpha_2}$ to say that
$I_\lambda$ is of this form. Our goal is to prove the estimate
\begin{equation*}
  |I_\lambda|  
   \les \rho_1^{\alpha_1+1} \rho_2^{\alpha_2+1}
   \quad\text{where }
   \rho_1 := \min\{1,|\lambda B_1|^{-1}\},\; \rho_2 := \min\{1,|\lambda B_2|^{-1}\},
  %\les \min\{1, |\lambda B_1|^{-1-\alpha_1}\}  \min\{1, |\lambda B_2|^{-1-\alpha_2}\},
\end{equation*}
which is uniform with respect to $|\lambda|\geq 1$ and $A,B_1,B_2$ as in \eqref{eq:Psi}, see
Theorem~\ref{thm:OscillatoryIntegral} below. In the following the parameters $\alpha_1,\alpha_2>-1$ will be
considered as fixed, i.e., the constants involved in $\les$ depend on $\alpha_1,\alpha_2\in (-1,\infty)$ in a
continuous way.

\medskip

We shall exploit   that $\Psi$ is smooth on $\Omega:=\{s\in [-1,1]^2:
\frac{1}{2}\leq \Psi(s)\leq 2\}$ with
 \begin{equation} \label{eq:properties_Psi}
  \partial_j \Psi(s) = -\frac{B_j}{2\Psi(s)} \quad \text{ for }s\in\Omega \text{ and }j=1,2.
\end{equation} 
This will be crucial  to set up an integration by parts scheme that is based on
\begin{equation} \label{eq:PsiIntByParts}
  \eta(\Psi(s))  e^{i\lambda\Psi(s)} 
  = \frac{1}{i\lambda B_j}\cdot \tilde\eta(\Psi(s)) \frac{\partial}{\partial s_j}\left(e^{i\lambda\Psi(s)}
  \right)  \qquad\text{for }j=1,2
\end{equation}
where $\tilde\eta(z):= -2z\eta(z)$. In particular, if $\eta$ is smooth with support in $[\frac{1}{2},2]$ then
so is $\tilde \eta$ and the procedure may be iterated.

\medskip

 The analysis of the integrals $I_\lambda$ is lengthy.  
 We start with estimates that do not take the
 oscillatory nature into account. Here we use, for all $\lambda\in\R$ and $s_1,s_2\in [-1,1]$,
 \begin{equation}\label{eq:nonosc_estimate}
   \big|m(s) \chi(\Psi(s)) e^{i\lambda \Psi(s)}\big|
   \leq \|m\|_\infty \|\chi\|_\infty \ind_{\Psi(s)\leq 2}
   \les \ind_{|s_1|\geq \frac{A-|B_2|-4}{|B_1|}}\ind_{|s_2|\geq \frac{A-|B_1|-4}{|B_2|}}.
 \end{equation}
 Indeed,  $\Psi(s)\leq 2$  implies  $|B_1||s_1|\geq  A-|B_2|-4$ and $|B_2||s_2|\geq  A-|B_1|-4$.

\begin{prop}\label{prop:OscIntEstimatesI}
  Let $\alpha_1,\alpha_2>-1$ and $A,B_1,B_2$ as in \eqref{eq:Psi}. Then  
  \begin{align*}
    \sup_{s_1\in [-1,1]}   \int_{-1}^1
    (1-s_2^2)^{\alpha_2} \ind_{\Psi(s)\leq 2} \,ds_2 
    &\les \min\{1,|B_2|^{-\alpha_2-1}\}, \\
    \int_{-1}^1 \int_{-1}^1 (1-s_1^2)^{\alpha_1} (1-s_2^2)^{\alpha_2} 
    \ind_{\Psi(s)\leq 2}  \,ds_1\,ds_2  
    &\les \min\{1,|B_1|^{-\alpha_1-1}\}\min\{1,|B_2|^{-\alpha_2-1}\}.
  \end{align*}
\end{prop} 
\begin{proof}
  We obtain from \eqref{eq:nonosc_estimate}  
  \begin{align*}% \label{eq:OscIntI}
    \begin{aligned}
     \int_{-1}^1  (1-s_2^2)^{\alpha_2} \ind_{\Psi(s)\leq 2} \,ds_2  
    &\stackrel{\eqref{eq:nonosc_estimate}}\les 
      \ind_{|s_1|\geq \frac{A-|B_2|-4}{|B_1|}} \cdot  \int_{-1}^1  
       (1-s_2^2)^{\alpha_2}  \ind_{|s_2|\geq \frac{A-|B_1|-4}{|B_2|}} \,ds_2 \\
    &\les 
      \ind_{|s_1|\geq \frac{A-|B_2|-4}{|B_1|}} \cdot  \int_{\frac{(A-|B_1|-4)_+}{|B_2|}}^1  
             (1-s_2)^{\alpha_2} \,ds_2    \\
    &\les   \ind_{|s_1|\geq \frac{A-|B_2|-4}{|B_1|}} \cdot  \left(1 -
    \frac{(A-|B_1|-4)_+}{|B_2|}\right)^{\alpha_2+1} \\ 
    &\les   \ind_{|s_1|\geq \frac{A-|B_2|-4}{|B_1|}} \cdot  \min\{1,|B_2|^{-\alpha_2-1}\}. 
  \end{aligned}
  \end{align*}
  In the last line we used  $|B_2|-(A-|B_1|-4)_+ \leq |B_2|-(A-|B_1|-4)\leq 4$. So 
  the first inequality is immediate and the second one results from multiplying this inequality with
  $(1-s_1^2)^{\alpha_1}$ and integration over $(-1,1)$.
\end{proof}

We now use this estimates to prove preliminary versions of our estimates for $I_\lambda$. We first focus on
oscillations with respect to~$s_2$. Here and in the following we shall often replace the symbols $m,\chi$ by
indexed versions or tilde versions to indicate functions with the same qualitative properties, namely those 
mentioned right before~\eqref{eq:Psi}.

\begin{prop}\label{prop:OscIntEstimatesIb}
  Let $\alpha_1,\alpha_2>-1$ and $A,B_1,B_2$ as in \eqref{eq:Psi}. Then, for  $|\lambda|\geq 1$,
  \begin{align*}
     \sup_{s_1\in [-1,1]} \left| \int_{-1}^1
    (1-s_2^2)^{\alpha_2}m(s)\chi(\Psi(s))e^{i\lambda\Psi(s)}\,ds_2\right|
    &\les \rho_2^{\alpha_2+1}, \\
    \left|\int_{-1}^1 \int_{-1}^1 (1-s_1^2)^{\alpha_1} (1-s_2^2)^{\alpha_2} 
    m(s)\chi(\Psi(s))e^{i\lambda\Psi(s)} \,ds_1\,ds_2\right|  
    &\les\min\{1,|B_1|^{-\alpha_1-1}\} \cdot \rho_2^{\alpha_2+1}.
  \end{align*}  
\end{prop}
\begin{proof}
  In view of Proposition~\ref{prop:OscIntEstimatesI} there is nothing left to prove for $\rho_2=1$, so we 
  assume $0<\rho_2=|\lambda B_2|^{-1}<1$. As before, we fix $s_1\in [-1,1]$ and investigate the
  one-dimensional integrals with respect to $s_2$. It will turn out convenient to subdivide the domain of integration according to $|1-s_2^2|\leq
  \rho_2$ or $|1-s_2^2|> \rho_2$. For the integral over the former region we use a simple pointwise estimate
  and Proposition~\ref{prop:OscIntEstimatesI}:
  \begin{align*}
    \left| \int_{|1-s_2^2|\leq \rho_2} 
    (1-s_2^2)^{\alpha_2}m(s)\chi(\Psi(s))e^{i\lambda\Psi(s)}\,ds_2\right|   
    &\stackrel{\eqref{eq:nonosc_estimate}}\les 
     \ind_{|s_1|\geq \frac{A-|B_2|-4}{|B_1|}}  \int_{1-s_2^2\leq \rho_2} (1-s_2^2)^{\alpha_2}
     \ind_{\Psi(s)\leq 2}\,ds_2   \\
    &\les \ind_{|s_1|\geq \frac{A-|B_2|-4}{|B_1|}} \;\rho_2^{\alpha_2+1}. 
  \end{align*}  
  As in the previous proposition, this implies both estimates for this part of the integral.  
  For the other part we use integration by parts.  
  \begin{align} \label{eq:OscIntIa}
      \begin{aligned}
    &\; \left| \int_{|1-s_2^2|> \rho_2} 
    (1-s_2^2)^{\alpha_2}m(s)\chi(\Psi(s))e^{i\lambda\Psi(s)}\,ds_2\right|   \\
    &\stackrel{\eqref{eq:PsiIntByParts}}= |\lambda B_2|^{-1} \left|\int_{|1-s_2^2|> \rho_2} 
    (1-s_2^2)^{\alpha_2}m(s)\chi_1(\Psi(s)) \frac{\partial}{\partial s_2}
    \left(e^{i\lambda\Psi(s)}\right)\,ds_2\right|
    \\
    &\les |\lambda B_2|^{-1}
     \left(
    \rho_2^{\alpha_2}     + \left|\int_{|1-s_2^2|> \rho_2}
    \frac{\partial}{\partial s_2}\Big( (1-s_2^2)^{\alpha_2}m(s) \chi_1(\Psi(s))\Big)
    e^{i\lambda\Psi(s)}\,ds_2\right|\right)
    \\
    &\stackrel{\eqref{eq:properties_Psi}}\les |\lambda B_2|^{-\alpha_2-1}   +
    |\lambda B_2|^{-1} |\alpha_2|
      \left|\int_{|1-s_2^2|> \rho_2}
     (1-s_2^2)^{\alpha_2-1}  m_1(s) \chi_1(\Psi(s))  e^{i\lambda\Psi(s)}\,ds_2\right| \\
     &+ |\lambda B_2|^{-1} \left|\int_{|1-s_2^2|> \rho_2}
     (1-s_2^2)^{\alpha_2}  m_2(s) \chi_1(\Psi(s)) 
     e^{i\lambda\Psi(s)}\,ds_2\right| \\     
     &+  |\lambda|^{-1} \left|\int_{|1-s_2^2|> \rho_2}
     (1-s_2^2)^{\alpha_2} m(s)  \chi_2(\Psi(s))  e^{i\lambda\Psi(s)}\,ds_2\right| 
     \end{aligned}
   \end{align}
    where $\chi_1(z):=\chi(z)z/2$, $m_1(s):=-2s_2m(s)$, $m_2(s):=\frac{\partial m}{\partial s_2}$, 
    $\chi_2(z):= \chi_1'(z)/(2z)$.
    
    \medskip
    
    \textbf{1st case $\alpha_2\leq 0$:}\; From~\eqref{eq:OscIntIa},
    Proposition~\ref{prop:OscIntEstimatesI} and $|\lambda|\geq 1$ we get 
    \begin{align*} 
    &\; \left| \int_{|1-s_2^2|> \rho_2} 
    (1-s_2^2)^{\alpha_2}m(s)\chi(\Psi(s))e^{i\lambda\Psi(s)}\,ds_2\right|   \\
    &\les |\lambda B_2|^{-\alpha_2-1}    + |\lambda B_2|^{-1}  |\alpha_2|  \int_{|1-s_2^2|> \rho_2}
     (1-s_2^2)^{\alpha_2-1} \,ds_2   + |\lambda B_2|^{-1}  +  |\lambda|^{-1} |B_2|^{-\alpha_2-1}      \\
     &\les |\lambda B_2|^{-\alpha_2-1}  + |\lambda B_2|^{-1} \rho_2^{\alpha_2}  \\
     &\les  |\lambda B_2|^{-\alpha_2-1}.  
  \end{align*}
  Note that the presence of $|\alpha_2|$ ensures that the singular integral is uniformly bounded from above
  for $\alpha_2\in (-1,0]$.
  %(Note that the last $\les$ does in general not hold for $\alpha_2>0$.)
  Since the integral vanishes identically for $|s_1|<\frac{A-|B_2|-4}{|B_1|}$, see \eqref{eq:nonosc_estimate},
  we even obtain
  \begin{align} \label{eq:OscIntIb}
     \left| \int_{|1-s_2^2|> \rho_2} 
    (1-s_2^2)^{\alpha_2}m(s)\chi(\Psi(s))e^{i\lambda\Psi(s)}\,ds_2\right|     
     &\les  \ind_{|s_1|\geq \frac{A-|B_2|-4}{|B_1|}} \;\rho_2^{\alpha_2+1}.  
  \end{align}
   As in the previous Proposition, this estimates implies both   inequalities and finishes the proof
    for $\alpha_2\leq 0$. 
    
    \medskip
    
    \textbf{2nd case $\alpha_2>0$:} Using integration by parts as in \eqref{eq:OscIntIa}   we get
  \begin{align*}
    &\; \left| \int_{|1-s_2^2|> \rho_2} 
    (1-s_2^2)^{\alpha_2}m(s)\chi(\Psi(s))e^{i\lambda\Psi(s)}\,ds_2\right|   \\
    &\les |\lambda B_2|^{-\alpha_2-1}  +  |\lambda B_2|^{-1} \left|\int_{|1-s_2^2|> \rho_2}
     (1-s_2^2)^{\alpha_2}  m_3(s) \chi_1(\Psi(s)) 
     e^{i\lambda\Psi(s)}\,ds_2\right|
      \\
      & +    
    |\lambda|^{-1} \left|\int_{|1-s_2^2|> \rho_2}
     (1-s_2^2)^{\alpha_2} m(s) \chi_2(\Psi(s))  e^{i\lambda\Psi(s)}\,ds_2\right|
     % \\
%     &\les  \rho_2^{\alpha_2+1}   +  |\lambda B_2|^{-1} \left|\int_{|1-s_2^2|> \rho_2}
%      (1-s_2^2)^{\alpha_2}  m_1(s) \chi_1(\Psi(s)) 
%      e^{i\lambda\Psi(s)}\,ds_2\right|
%       \\ 
%       &+    
%     |\lambda|^{-1} \left|\int_{|1-s_2^2|> \rho_2}
%      (1-s_2^2)^{\alpha_2} m(s) \chi_2(\Psi(s))  e^{i\lambda\Psi(s)}\,ds_2\right|.   
  \end{align*}
  where $m_3(s):= -2\alpha_2s_2m(s)+(1-s^2)\tfrac{\partial m}{\partial s_2}$.
  By iteration we find after finitely many steps, still using
  Proposition~\ref{prop:OscIntEstimatesI},
  \begin{align*}
    &\; \left| \int_{|1-s_2^2|> \rho_2} 
    (1-s_2^2)^{\alpha_2}m(s)\chi(\Psi(s))e^{i\lambda\Psi(s)}\,ds_2\right|   \\
    &\les |\lambda B_2|^{-\alpha_2-1}   +  
    \sum_{\tau=0}^{\lceil\alpha_2\rceil+1}
    |\lambda B_2|^{-\tau} |\lambda|^{-\lceil\alpha_2\rceil-1+\tau}
     \left|\int_{|1-s_2^2|> \rho_2}
     (1-s_2^2)^{\alpha_2-\tau} \tilde m_\tau(s) \tilde\chi_\tau(\Psi(s))  e^{i\lambda\Psi(s)}\,ds_2\right| \\
     &\stackrel{\eqref{eq:nonosc_estimate}}\les  |\lambda B_2|^{-\alpha_2-1}  + 
     \sum_{\tau=0}^{\lceil\alpha_2\rceil+1} |\lambda B_2|^{-\tau} |\lambda|^{-\lceil\alpha_2\rceil-1+\tau}  \int_{|1-s_2^2|> \rho_2}
     (1-s_2^2)^{\alpha_2-\tau} \ind_{\Psi(s)\leq 2}  \,ds_2  \\
    &\les |\lambda B_2|^{-\alpha_2-1}   +  \sum_{\tau=0}^{\lceil\alpha_2\rceil}
    |\lambda B_2|^{-\tau} |\lambda|^{- \lceil\alpha_2\rceil-1 +\tau}  \cdot |B_2|^{-\alpha_2+\tau-1} 
    + |\lambda B_2|^{-\lceil\alpha_2\rceil-1} \cdot  \rho_2^{\alpha_2-\tau+1} \\ 
    &\les |\lambda B_2|^{-\alpha_2-1}   + \sum_{\tau=0}^{\lceil\alpha_2\rceil+1}
    |\lambda B_2|^{-\tau}   |\lambda B_2|^{-\alpha_2-1+\tau}  \\
    &\les |\lambda B_2|^{-\alpha_2-1} .    
  \end{align*}
  As above, this actually  implies the better bound 
  $$
     \left| \int_{|1-s_2^2|> \rho_2} 
    (1-s_2^2)^{\alpha_2}m(s)\chi(\Psi(s))e^{i\lambda\Psi(s)}\,ds_2\right|   \\
    \les  \ind_{|s_1|\geq \frac{A-|B_2|-4}{|B_1|}} \;\rho_2^{\alpha_2+1}, 
  $$ 
  which proves both inequalities. 
\end{proof}

In our next result we prove the claim assuming that both exponents $\alpha_1,\alpha_2$ are nonpositive.

\begin{prop} \label{prop:OscIntEstimatesIII}
  Assume $I_\lambda\in \mathcal J_{\lambda,\alpha_1,\alpha_2}$ for 
  $\alpha_1,\alpha_2>-1$ and $A,B_1,B_2$ as in \eqref{eq:Psi}. 
   Additionally assume $\min\{\alpha_1,\alpha_2\}\leq 0$. Then, for $|\lambda|\geq 1$,  
  $$
    |I_\lambda|\les \rho_1^{\alpha_1+1}\rho_2^{\alpha_2+1}. 
    %\min\{1,|\lambda B_1|^{-\alpha_1-1}\}\min\{1,|\lambda B_2|^{-\alpha_2-1}\}.
  $$
\end{prop}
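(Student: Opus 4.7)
I will prove the estimate by a case analysis based on the signs of $\alpha_1,\alpha_2$, combining Propositions~\ref{prop:OscIntEstimatesI} and~\ref{prop:OscIntEstimatesII} with one additional integration by parts in the most delicate case. When $\alpha_1,\alpha_2>0$ the oscillation is unnecessary and the claim follows directly from Proposition~\ref{prop:OscIntEstimatesII} applied to the trivial bound $|J_\lambda|\leq\iint(1-s_1^2)^{\alpha_1}(1-s_2^2)^{\alpha_2}\ind_{\Psi(s)\leq 2}\,ds_1\,ds_2$, since $\zeta_i=\min\{1,|B_i|^{-\alpha_i-1}\}$ in this regime. If exactly one of the exponents, say $\alpha_2$, lies in $(-1,0]$, then Proposition~\ref{prop:OscIntEstimatesI} yields the uniform bound $\zeta_2$ on the inner $s_2$-integral; this integral additionally vanishes unless the $s_2$-support of $\chi\circ\Psi$ is non-empty, which forces $B_1s_1\geq A-|B_2|-4$. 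Integrating $(1-s_1^2)^{\alpha_1}$ against this constraint produces $\zeta_1$ via the same computation used in Proposition~\ref{prop:OscIntEstimatesII}, exploiting $|B_1|+|B_2|\leq A$.

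The substantive case is $\alpha_1,\alpha_2\in(-1,0]$. Setting $\rho_1:=\min\{1,|\lambda B_1|^{-1}\}$, I partition $s_1\in[-1,1]$ into the near-endpoint region $E_1:=\{|1-s_1^2|\leq\rho_1\}$ and its complement $F_1$. On $E_1$, combining the crude bound $\int_{E_1}(1-s_1^2)^{\alpha_1}\,ds_1\lesssim\rho_1^{\alpha_1+1}=\zeta_1$ with Proposition~\ref{prop:OscIntEstimatesI} applied in $s_2$ gives contribution $\lesssim\zeta_1\zeta_2$. On $F_1$ I invoke the identity \eqref{eq:PsiIntByParts} with $l=1$. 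The boundary terms at $|1-s_1^2|=\rho_1$ are $\lesssim\rho_1^{\alpha_1}/|\lambda B_1|\cdot\zeta_2=\zeta_1\zeta_2$ (since $|\lambda B_1|>1$ whenever $F_1\neq\emptyset$), and differentiating the amplitude $(1-s_1^2)^{\alpha_1}m(s)\tilde\chi(\Psi)$ in $s_1$ produces three interior pieces. Two of them, with factors $(1-s_1^2)^{\alpha_1-1}s_1$ (whose coefficient $-2\alpha_1$ vanishes when $\alpha_1=0$) or $(1-s_1^2)^{\alpha_1}\partial_{s_1}m$, are handled by Proposition~\ref{prop:OscIntEstimatesI} in $s_2$ together with the elementary bounds $\int_{F_1}(1-s_1^2)^{\alpha_1-1}\,ds_1\lesssim\rho_1^{\alpha_1}$ and $\int_{F_1}(1-s_1^2)^{\alpha_1}\,ds_1\lesssim 1$; both combine with the IBP prefactor $1/|\lambda B_1|$ to give $\zeta_1\zeta_2$ after using $\alpha_1\leq 0$.

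The main obstacle is the third piece. Here $\partial_{s_1}\tilde\chi(\Psi)=-B_1/(2\Psi)\cdot\tilde\chi'(\Psi)$, so the factor $B_1$ cancels the $1/B_1$ from IBP, leaving an integral of the form $\frac{1}{2i\lambda}\iint(1-s_1^2)^{\alpha_1}(1-s_2^2)^{\alpha_2}\frac{m(s)\tilde\chi'(\Psi)}{\Psi}e^{i\lambda\Psi}\,ds_1\,ds_2$ that is itself again in the class $\mathcal J_{\lambda,\alpha_1,\alpha_2}$, so invoking the very statement being proved would be circular. Instead I apply Proposition~\ref{prop:OscIntEstimatesI} in $s_2$ to this residual integral (using $\alpha_2\leq 0$), producing the factor $\zeta_2$ and the same support indicator $\ind_{B_1s_1\geq A-|B_2|-4}$. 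Integrating $(1-s_1^2)^{\alpha_1}$ against the indicator yields $\min\{1,|B_1|^{-\alpha_1-1}\}$, and a brief case analysis comparing $|\lambda B_1|$ and $|B_1|$ to $1$ (exploiting $\alpha_1\in(-1,0]$, so that $|\lambda|^{\alpha_1}\leq 1$ and $|B_1|^{\alpha_1+1}\leq 1$ on the relevant regimes) confirms $\min\{1,|B_1|^{-\alpha_1-1}\}/|\lambda|\lesssim\zeta_1$, which closes the estimate. The key point is that after a single IBP in $s_1$ the residual integral still admits an $s_2$-oscillation bound, so the $1/|\lambda|$ gain from the $B_1$-cancellation combines with the extra $\zeta_2$ to recover the missing $\zeta_1$ factor without any self-reference.
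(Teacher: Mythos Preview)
Your proof is correct and follows the same overall architecture as the paper's: the case $\alpha_1,\alpha_2>0$ via Proposition~\ref{prop:OscIntEstimatesII}, and for $\alpha_1\in(-1,0]$ the split at $|1-s_1^2|=\rho_1$ followed by one integration by parts on the complement. The only genuine difference is in how the residual ``third piece'' (the term where the $s_1$-derivative lands on $\tilde\chi(\Psi)$, producing a $B_1$ that cancels the $1/B_1$ from integration by parts) is disposed of. The paper integrates by parts a \emph{second} time in $s_1$, gaining an additional factor $|\lambda|^{-1}$, and then estimates the remaining integral trivially via Proposition~\ref{prop:OscIntEstimatesII}; this yields $|\lambda|^{-2}\min\{1,|B_1|^{-\alpha_1-1}\}\min\{1,|B_2|^{-\alpha_2-1}\}\les\zeta_1\zeta_2$. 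You instead stop after one integration by parts, apply Proposition~\ref{prop:OscIntEstimatesI} in $s_2$ (yielding $\zeta_2$), retain the support indicator $\ind_{B_1s_1\geq A-|B_2|-4}$ coming from $\supp(\chi)$, and integrate $(1-s_1^2)^{\alpha_1}$ against it to get $\min\{1,|B_1|^{-\alpha_1-1}\}$; the check $|\lambda|^{-1}\min\{1,|B_1|^{-\alpha_1-1}\}\les\zeta_1$ for $\alpha_1\in(-1,0]$ then closes the bound. Both routes are valid; yours is marginally shorter and avoids the second integration by parts, while the paper's is slightly more uniform in that it never needs to track the support indicator explicitly inside the IBP argument. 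Your separate treatment of the mixed case (one positive, one nonpositive exponent) via the indicator trick without any IBP is also a pleasant simplification over the paper, which subsumes that case into the same IBP scheme.
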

\begin{proof} 
  Assume w.l.o.g. $\alpha_1\leq 0$. In view of Proposition~\ref{prop:OscIntEstimatesIb} it suffices to prove
  the claim for $\rho_1=|\lambda B_1|^{-1}<1$. 
  We   split the
  integral $I_\lambda$ according to $I_\lambda=I_\lambda^1+I_\lambda^2$ where 
  \begin{align*}
    I_\lambda^1
    &:= \int_{-1}^1 \int_{1-s_1^2\leq \rho_1} (1-s_1^2)^{\alpha_1} 
  (1-s_2^2)^{\alpha_2}m(s)\chi(\Psi(s))e^{i\lambda\Psi(s)}\,ds_1\,ds_2, \\
  I_\lambda^2
    &:= \int_{-1}^1 \int_{1-s_1^2> \rho_1} (1-s_1^2)^{\alpha_1} 
   (1-s_2^2)^{\alpha_2}m(s)\chi(\Psi(s))e^{i\lambda\Psi(s)}\,ds_1\,ds_2. 
  \end{align*}  
  Proposition~\ref{prop:OscIntEstimatesIb} gives  
\begin{align*}
  |I_\lambda^1|
  &\leq \int_{1-s_1^2\leq \rho_1} (1-s_1^2)^{\alpha_1}\,ds_1 \cdot 
  \sup_{s_1\in [-1,1]} \left| \int_{-1}^1
  (1-s_2^2)^{\alpha_2}m(s)\chi(\Psi(s))e^{i\lambda\Psi(s)}\,ds_2\right| \\
  &\les \int_{1-s_1^2\leq \rho_1} (1-s_1^2)^{\alpha_1}\,ds_1 \cdot 
  \rho_2^{\alpha_2+1} \\ 
  &\les \rho_1^{\alpha_1+1} \rho_2^{\alpha_2+1}. 
\end{align*}  
 So it  remains to estimate $I_\lambda^2$. We use integration by parts  to get
 \begin{align*}
  |I_\lambda^2|
  %&= \left| \int_{-1}^1 \int_{|1-s_1^2|\geq \rho_1}  (1-s_1^2)^{\alpha_1}
  %(1-s_2^2)^{\alpha_2}m(s)\chi(\Psi(s)) e^{i\lambda\Psi(s)}\,ds_2 \,ds_1 \right| \\
  &\stackrel{\eqref{eq:PsiIntByParts}}= 
  |\lambda B_1|^{-1} \left| \int_{-1}^1  \int_{1-s_1^2> \rho_1}
   (1-s_1^2)^{\alpha_1} (1-s_2^2)^{\alpha_2}m(s) \chi_1(\Psi(s)) \frac{\partial}{\partial
  s_1}\left(e^{i\lambda\Psi(s)}\right)\,ds_1 \,ds_2  \right| \\
  &\les
  |\lambda B_1|^{-1} \cdot
  \sup_{1-s_1^2=\rho_1}
  \left|(1-s_1^2)^{\alpha_1} \int_{-1}^1 (1-s_2^2)^{\alpha_2}m(s)  \chi_1(\Psi(s))
  e^{i\lambda\Psi(s)}\,ds_2\right|  \\
   &+ |\lambda B_1|^{-1} \left| \int_{-1}^1  \int_{1-s_1^2> \rho_1}
   \frac{\partial}{\partial
  s_1}\Big( (1-s_1^2)^{\alpha_1} (1-s_2^2)^{\alpha_2}m(s)\chi_1(\Psi(s))\Big)
  e^{i\lambda\Psi(s)}\,ds_1 \,ds_2  \right| \\  
  &\stackrel{\eqref{eq:properties_Psi}}\les |\lambda B_1|^{-1}\cdot \rho_1^{\alpha_1} \rho_2^{\alpha_2+1} \\
  &+  |\lambda B_1|^{-1}  |\alpha_1| \int_{1-s_1^2> \rho_1} 
    (1-s_1^2)^{\alpha_1-1} \,ds_1 \cdot \sup_{s_1\in [-1,1]} \left|\int_{-1}^1 (1-s_2^2)^{\alpha_2} 
  m_1(s)  \chi_1(\Psi(s)) e^{i\lambda\Psi(s)} \,ds_2\right| \\
  &+  |\lambda B_1|^{-1}   \int_{1-s_1^2> \rho_1} 
    (1-s_1^2)^{\alpha_1} \,ds_1 \cdot \sup_{s_1\in [-1,1]} \left|\int_{-1}^1 (1-s_2^2)^{\alpha_2} 
  m_2(s)  \chi_1(\Psi(s)) e^{i\lambda\Psi(s)} \,ds_2\right| \\
  &+  |\lambda|^{-1}   \left|
  \int_{-1}^1 \int_{1-s_1^2>\rho_1}  (1-s_1^2)^{\alpha_1} (1-s_2^2)^{\alpha_2}  m(s)  \chi_2(\Psi(s))
  e^{i\lambda\Psi(s)}\,ds_2\,ds_1\right|.
\end{align*} 
Using the assumption  $\alpha_1\leq 0$ we get from Proposition~\ref{prop:OscIntEstimatesIb}
\begin{align*}
  |I_\lambda^2|
  &\les  \rho_1^{\alpha_1+1} \rho_2^{\alpha_2+1} +  
   |\lambda B_1|^{-1}  \rho_1^{\alpha_1}  \cdot \rho_2^{\alpha_2+1} 
   + |\lambda B_1|^{-1}   \cdot \rho_2^{\alpha_2+1}
   \\
  &+  |\lambda|^{-1}   \left|
  \int_{-1}^1 \int_{1-s_1^2>\rho_1}  (1-s_1^2)^{\alpha_1} (1-s_2^2)^{\alpha_2} m(s)  \chi_2(\Psi(s))
  e^{i\lambda\Psi(s)}\,ds_2\,ds_1\right| \\
  &\les  \rho_1^{\alpha_1+1} \rho_2^{\alpha_2+1}  +  
  |\lambda|^{-1}   \left|
  \int_{-1}^1 \int_{1-s_1^2>\rho_1}  (1-s_1^2)^{\alpha_1} (1-s_2^2)^{\alpha_2} m(s)  
  \chi_2(\Psi(s)) e^{i\lambda\Psi(s)}\,ds_2\,ds_1\right|.
\end{align*}
Iterating this finitely many times gives, with the aid of Proposition~\ref{prop:OscIntEstimatesI},
\begin{align*}
  |I_\lambda^2|
  &\les  \rho_1^{\alpha_1+1} \rho_2^{\alpha_2+1} +  
  |\lambda|^{-2-\lceil\alpha_1\rceil-\lceil\alpha_2\rceil}   \left|
  \int_{-1}^1 \int_{1-s_1^2>\rho_1}  (1-s_1^2)^{\alpha_1} (1-s_2^2)^{\alpha_2} m(s)
   \chi_3(\Psi(s)) e^{i\lambda\Psi(s)} 
  \,ds_2\,ds_1\right| \\
  &\les  \rho_1^{\alpha_1+1} \rho_2^{\alpha_2+1} +  
  |\lambda|^{-2-\alpha_1-\alpha_2}  
  \int_{-1}^1 \int_{-1}^1  (1-s_1^2)^{\alpha_1} (1-s_2^2)^{\alpha_2} \ind_{\Psi(s)\leq 2}  
  \,ds_2\,ds_1 \\
  &\les  \rho_1^{\alpha_1+1} \rho_2^{\alpha_2+1} +  
  |\lambda|^{-2-\alpha_1 - \alpha_2 } \cdot |B_1|^{-\alpha_1-1} |B_2|^{-\alpha_2-1} \\
  &\les \rho_1^{\alpha_1+1}\rho_2^{\alpha_2+1}. 
\end{align*}
% \b{So it remains to discuss the case $\alpha_1>0$}
%   \begin{align*}
%   ~
%   &\r{\les \rho_1^{\alpha_1+1} \rho_2^{\alpha_2+1}  + |\lambda|^{-1}    \left|
%   \int_{-1}^1 \int_{1-s_1^2>\rho_1}  (1-s_1^2)^{\alpha_1} (1-s_2^2)^{\alpha_2} m(s) \tilde {\tilde
%   \chi}(\Psi(s)) e^{i\lambda\Psi(s)}\,ds_2\,ds_1\right|.} 
% \intertext{\r{Koennen wir hier schon das ganze beenden mit demselben Trick wie oben? $\alpha_1\leq 0$ haben
% wir schon verwendet in der Abschaetzung des 2ten Integrals?} Repeating this scheme one finds} 
%   |I_\lambda^2|
%   &\les  \rho_1^{\alpha_1+1} \rho_2^{\alpha_2+1} \\
%   &+ |\lambda|^{-1}  \rho_1^{\alpha_1+1} \rho_2^{\alpha_2+1}  
%   + |\lambda|^{-2}  \left|\int_{1-s_1^2>\rho_1} 
%   \int_{-1}^1 (1-s_1^2)^{\alpha_1} (1-s_2^2)^{\alpha_2} m(s) \chi_*(\Psi(s))
%   e^{i\lambda\Psi(s)}\,ds_1\,ds_2\right|.      
% \intertext{So Proposition~\ref{prop:OscIntEstimatesII} and $|\lambda|\geq 1$ lead to} 
%   |J_\lambda^2| 
%   &\les \rho_1^{\alpha_1+1} \rho_2^{\alpha_2+1}
%   +|\lambda|^{-2}  \min\{1,|B_1|^{-\alpha_1-1}\} \min\{1,|B_2|^{-\alpha_2-1}\}  \\
%   &\les \rho_1^{\alpha_1+1} \rho_2^{\alpha_2+1} +   \min\{1,|\lambda B_1|^{-\alpha_1-1}\} \min\{1,|\lambda B_2|^{-\alpha_2-1}\}  \\
%   &\eqsim  \rho_1^{\alpha_1+1} \rho_2^{\alpha_2+1}.
% \end{align*}
% So the claimed bounds hold for $|I_\lambda^2|$ as well and 
% $|I_\lambda|\leq |I_\lambda^1|+|I_\lambda^2|$ gives the claim.   
\end{proof}
  
Proposition~\ref{prop:OscIntEstimatesIII} already provides the final estimates in the special case where one
of the exponents $\alpha_1,\alpha_2$ is less than or equal to 0. For the remaining  case an additional
integration by parts argument is needed. This is the main result in this section.

\begin{thm} \label{thm:OscillatoryIntegral}
  Assume $I_\lambda\in\mathcal J_{\lambda,\alpha_1,\alpha_2}$ for $\alpha_1,\alpha_2>-1$ and   $A,B_1,B_2$
  as in \eqref{eq:Psi}. Then we have for all $|\lambda|\geq 1$
  \begin{align*}
     |I_\lambda|\les \rho_1^{\alpha_1+1}\rho_2^{\alpha_2+1}.
  \end{align*}
  The constant depends only on $\alpha_1, \alpha_2$ and on
  the $L^\infty$-norms of finitely many  derivatives of $m,\chi,\Psi$.  
\end{thm}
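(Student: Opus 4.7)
The plan is to extend Proposition~\ref{prop:OscIntEstimatesIII} (which already yields the stated bound whenever $\alpha_1,\alpha_2\in(-1,0]$, since then $\zeta_i=\min\{1,|\lambda B_i|^{-1-\alpha_i}\}$) to the regime $\alpha_i>0$ by carrying out additional integrations by parts that exploit the endpoint vanishing of $(1-s_i^2)^{\alpha_i}$ at $s_i=\pm 1$. By symmetry it suffices to treat the $s_1$-direction and establish $|I_\lambda|\les\min\{1,|\lambda B_1|^{-1-\alpha_1}\}$ uniformly in the $s_2$-data; the analogous argument in $s_2$ then produces the second factor of the product bound.

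Assume $\alpha_1>0$. Applying \eqref{eq:PsiIntByParts} in the $s_1$-variable and expanding via the product rule (with no boundary contribution, thanks to $(1-s_1^2)^{\alpha_1}|_{\pm 1}=0$) decomposes $I_\lambda$ into three daughter integrals of the form $\mathcal J_{\lambda,\alpha_1',\alpha_2}$: (a) derivative on the weight, giving $\alpha_1'=\alpha_1-1$ and prefactor $(i\lambda B_1)^{-1}$; (b) derivative on $m$, giving $\alpha_1'=\alpha_1$ with $m$ replaced by $m_{s_1}$ and prefactor $(i\lambda B_1)^{-1}$; (c) derivative on $\tilde\chi(\Psi)$, giving $\alpha_1'=\alpha_1$ and prefactor $(i\lambda)^{-1}$ (the chain-rule factor $\Psi_{s_1}=-B_1/(2\Psi)$ cancels one $B_1$). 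I iterate this scheme up to $N:=\lceil\alpha_1\rceil+1$ times, with the proviso that any branch already having performed $n_W=\lceil\alpha_1\rceil$ weight-hits is terminated early (further differentiation of $(1-s_1^2)^{\alpha_1-\lceil\alpha_1\rceil}$ would produce a non-integrable integrand). The outcome is $I_\lambda$ as a finite sum over the leaves of the resulting IBP tree: each leaf is a prefactor $\lambda^{-k}|B_1|^{-(n_W+n_M)}$ (with $k$ the number of IBPs on that branch) multiplied by an oscillatory integral in $\mathcal J_{\lambda,\alpha_1-n_W,\alpha_2}$ to which Proposition~\ref{prop:OscIntEstimatesIII} applies.

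The verification then reduces to a case analysis, on the four possible orderings of $|B_1|$ with $1$ and of $|\lambda B_1|$ with $1$, showing that the product of prefactor and Proposition~\ref{prop:OscIntEstimatesIII}-bound for each leaf is dominated by $\min\{1,|\lambda B_1|^{-1-\alpha_1}\}$. For W-saturated leaves (exponent $\alpha_1-n_W\in(-1,0]$) Proposition~\ref{prop:OscIntEstimatesIII} produces the sharp $|\lambda B_1|$-scaling, which combines cleanly with the prefactor to deliver exactly the target; for non-saturated leaves (exponent $\alpha_1-n_W>0$, where Proposition~\ref{prop:OscIntEstimatesIII} only provides the weaker $|B_1|$-scaling), the extra $\lambda^{-1}$-factor produced by the cap $N=\lceil\alpha_1\rceil+1$ (one IBP beyond the weight's reduction budget) supplies the missing $\lambda$-decay. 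The regime $|\lambda B_1|<1$ is absorbed into the trivial bound $|I_\lambda|\les 1$ that follows from absolute integrability; in this range $|B_1|<1$ because $|\lambda|\geq 1$, so the trivial bound matches the target in the $s_1$-direction.

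The main obstacle is the combinatorial bookkeeping, in particular the interplay between the weight-saturation cap $n_W\leq\lceil\alpha_1\rceil$ and the total-IBP cap $N=\lceil\alpha_1\rceil+1$. Without the first, the pure W-branch would drive the weight exponent below $-1$ and produce undefined integrals; without the second, the pure $m$- or $C$-branches would have too few $\lambda^{-1}$ factors to match the target $|\lambda B_1|^{-1-\alpha_1}$ against the Proposition~\ref{prop:OscIntEstimatesIII} bound's lack of $\lambda$-scaling when $\alpha_1-n_W>0$. The choice $N=\lceil\alpha_1\rceil+1$ is precisely tuned so that every branch either becomes W-saturated (and inherits Proposition~\ref{prop:OscIntEstimatesIII}'s sharp scaling) or carries at least one $\lambda^{-1}$-factor beyond its Proposition~\ref{prop:OscIntEstimatesIII}-bound, making the two constraints compatible and closing the estimate in every regime.
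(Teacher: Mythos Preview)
Your overall strategy matches the paper's: iterated integration by parts in the $s_1$- and $s_2$-directions to lower the weight exponents into $(-1,0]$, after which Proposition~\ref{prop:OscIntEstimatesIII} already yields the sharp $|\lambda B_i|$-scaling. The organizational details differ somewhat---the paper chooses the IBP depth adaptively as $M_i\in\{0,\lceil\alpha_i\rceil\}$ depending on whether $|\lambda B_i|\lessgtr 1$, and it merges your W- and M-type daughters into a single term by writing $(1-s_1^2)\partial_{s_1}m$ together with $-2\alpha_1 s_1 m$ as a new amplitude at exponent $\alpha_1-1$---but these are minor variations and your leaf bookkeeping is sound.

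There is, however, a genuine gap in the ``by symmetry it suffices'' reduction. The two one-sided bounds
\[
|I_\lambda|\les\min\{1,|\lambda B_1|^{-1-\alpha_1}\}\quad\text{and}\quad|I_\lambda|\les\min\{1,|\lambda B_2|^{-1-\alpha_2}\}
\]
do \emph{not} combine to give the product: take $|\lambda|=100$, $|B_1|=|B_2|=1$, $\alpha_1=\alpha_2=1$; the target is $100^{-4}$ while the minimum of the two one-sided bounds is only $100^{-2}$. Since you explicitly invoke Proposition~\ref{prop:OscIntEstimatesIII} on the $s_1$-leaves (which still carry the unmodified exponent $\alpha_2$, hence only the weak $\zeta_2=\min\{1,|B_2|^{-1-\alpha_2}\}$ when $\alpha_2>0$), the $s_1$-argument as you describe it delivers at best $\min\{1,|\lambda B_1|^{-1-\alpha_1}\}\cdot\zeta_2$, and the symmetric argument $\zeta_1\cdot\min\{1,|\lambda B_2|^{-1-\alpha_2}\}$; these do not multiply. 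The correct route---which the paper takes---is to perform the $s_2$-IBP reduction on each $s_1$-leaf \emph{before} applying Proposition~\ref{prop:OscIntEstimatesIII}, so that the terminal integrals lie in $\mathcal J_{\lambda,\alpha_1',\alpha_2'}$ with both reduced exponents. Equivalently, you may apply your (already-established) $s_2$-lemma to each $J_j\in\mathcal J_{\lambda,\alpha_1^j,\alpha_2}$, obtaining $|J_j|\les\zeta_1^j\cdot\min\{1,|\lambda B_2|^{-1-\alpha_2}\}$, and then use the fact that your $s_1$-case analysis actually proves $\sum_j|c_j|\,\zeta_1^j\les\min\{1,|\lambda B_1|^{-1-\alpha_1}\}$. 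The fix is purely organizational, but the argument as written is incomplete.
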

\begin{proof} 
  We first assume $\alpha_1>0$. Then 
  \begin{align*}
    &\int_{-1}^1 (1-s_1^2)^{\alpha_1} m(s) \chi(\Psi(s)) e^{i\lambda \Psi(s)} \,ds_1 \\
    &\stackrel{\eqref{eq:PsiIntByParts}}= 
    \frac{1}{i\lambda B_1} \int_{-1}^1 (1-s_1^2)^{\alpha_1} m(s)
      \chi_1(\Psi(s))
    \frac{\partial}{\partial s_1} 
    (e^{i\lambda \Psi(s)}) \,ds_1  \\
    &= \frac{i}{\lambda B_1} \int_{-1}^1 
    \frac{\partial}{\partial s_1}  \Big[
    (1-s_1^2)^{\alpha_1} m(s)  \chi_1(\Psi(s)) \Big]    
    e^{i\lambda \Psi(s)} \,ds_1  \\
     &\stackrel{\eqref{eq:properties_Psi}}= 
     \frac{i}{\lambda B_1}  \int_{-1}^1    
    (1-s_1^2)^{\alpha_1-1}   m_1(s)  \chi_1(\Psi(s)) e^{i\lambda \Psi(s)} \,ds_1 \\ 
    &+  \frac{i}{\lambda}   \int_{-1}^1  
    (1-s_1^2)^{\alpha_1} m(s)  \chi_2(\Psi(s))  e^{i\lambda \Psi(s)} \,ds_1.  
  \end{align*}
  This scheme may be repeated   as long   
  as the integrand vanishes on the boundary. In this way, we get for all $M_1\in\{0,\ldots,\lceil
  \alpha_1\rceil\}$
  \begin{align*}
    &\int_{-1}^1 (1-s_1^2)^{\alpha_1} m(s)\chi(\Psi(s)) e^{i\lambda\Psi(s)} \,ds_1 \\
    &= \sum_{\beta_1=0}^{M_1}   
    \left( \frac{i}{\lambda B_1}\right)^{\beta_1} 
    \left( \frac{i}{\lambda}\right)^{M_1 -\beta_1} 
    \int_{-1}^1 (1-s_1^2)^{\alpha_1-\beta_1} m_{\beta_1,M_1}(s) \chi_{\beta_1,M_1}(\Psi(s))  
    e^{i\lambda\Psi(s)} \,ds_1.     
  \end{align*}
  Note that this formula is true also in the case $-1<\alpha_1\leq 0$ where necessarily
  $\beta_1=M_1=\lceil\alpha_1\rceil=0$. So we conclude that the above identity holds under the assumptions
  of the theorem. In the case $\beta_1\in\{0,\ldots,M_1-1\}$  the exponent $\alpha_1-\beta_1$ is positive
  because of $\alpha_1-\beta_1\geq \alpha_1-\lceil \alpha_1\rceil+1>0$. So the integrand vanishes on the
  boundary and we can perform   another integration by parts step. This gives the formula
  \begin{align*}
    &\int_{-1}^1 (1-s_1^2)^{\alpha_1} m(s)\chi(\Psi(s)) e^{i\lambda\Psi(s)} \,ds_1 \\
    &= \left( \frac{i}{\lambda B_1}\right)^{M_1} 
    \int_{-1}^1 (1-s_1^2)^{\alpha_1-M_1} m_{M_1,M_1}(s) \chi_{M_1,M_1}(\Psi(s))  
    e^{i\lambda\Psi(s)} \,ds_1 \\
%     &+ \sum_{\beta_1=0}^{M_1-1}
%     \sum_{\sigma_1\in\{0,1\}}   
%     \left( \frac{i}{\lambda B_1}\right)^{\beta_1+\sigma_1} 
%     \left( \frac{i}{\lambda}\right)^{M_1 -\beta_1+1-\sigma_1}     
%     \int_{-1}^1 (1-s_1^2)^{\alpha_1-\beta_1-\sigma_1} m_{\beta_1,M_1,\sigma_1}(s)
%     \chi_{\beta_1,M_1,\sigma_1}(\Psi(s)) e^{i\lambda\Psi(s)} \,ds_1. 
 	&+ \ind_{M_1\geq 1} \sum_{\beta_1=0}^{M_1}         
    \left( \frac{i}{\lambda B_1}\right)^{\beta_1} 
    \left( \frac{i}{\lambda}\right)^{M_1-\beta_1+1}     
    \int_{-1}^1 (1-s_1^2)^{\alpha_1-\beta_1} \tilde m_{\beta_1,M_1}(s)
    \tilde \chi_{\beta_1,M_1}(\Psi(s)) e^{i\lambda\Psi(s)} \,ds_1. 
  \end{align*}
  This finishes the integration by parts with respect to $s_1$ and we now focus on the integration with
  respect to $s_2$. So we multiply the above expression with $(1-s_2^2)^{\alpha_2}$ and perform an
  analogous analysis for each of these integrals. In this way we obtain, for any given
  $M_1\in\{0,\ldots,\lceil\alpha_1 \rceil\},M_2\in\{0,\ldots,\lceil\alpha_2\rceil\}$, 
  $$
    I_\lambda
    = I_\lambda^1+\ldots+I_\lambda^4
  $$ 
  where
  \begin{align*}
    I_\lambda^1
    &= \left( \frac{i}{\lambda B_1}\right)^{M_1}\left( \frac{i}{\lambda B_2}\right)^{M_2} 
    J_\lambda^1,   \\
    I_\lambda^2
    &=  \ind_{M_2\geq 1} \left( \frac{i}{\lambda B_1}\right)^{M_1}\sum_{\beta_2=0}^{M_2}
      \left( \frac{i}{\lambda B_2}\right)^{\beta_2} 
    \left( \frac{i}{\lambda}\right)^{M_2 -\beta_2+1}  J_{\lambda,\beta_2}^2, \\
    I_\lambda^3
    &= \ind_{M_1\geq 1} \left( \frac{i}{\lambda B_2}\right)^{M_2}\sum_{\beta_1=0}^{M_1}
     \left( \frac{i}{\lambda B_1}\right)^{\beta_1} 
    \left( \frac{i}{\lambda}\right)^{M_1 -\beta_1+1} 
    J_{\lambda,\beta_1}^3, \\
    I_\lambda^4
    &= \ind_{M_1,M_2\geq 1}  \sum_{\beta_1=0}^{M_1}\sum_{\beta_2=0}^{M_2}
    \left( \frac{i}{\lambda B_1}\right)^{\beta_1} 
    \left( \frac{i}{\lambda B_2}\right)^{\beta_2} 
    \left( \frac{i}{\lambda}\right)^{M_1+M_2-\beta_1 -\beta_2+2}
    J_{\lambda,\beta_1,\beta_2}^4
  \end{align*}
  and the integrals belong to the classes
  \begin{align} \label{eq:JClasses}
    \begin{aligned}
    &J_\lambda^1\in \mathcal J_{\lambda,\alpha_1-M_1,\alpha_2-M_2},
    &&\qquad\qquad 
    J_{\lambda,\beta_2}^2\in \mathcal J_{\lambda,\alpha_1-M_1,\alpha_2-\beta_2}, \\
    &J_{\lambda,\beta_1}^3\in\mathcal J_{\lambda,\alpha_1-\beta_1,\alpha_2-M_2},
    &&\qquad\qquad J_{\lambda,\beta_1,\beta_2}^4\in \mathcal
    J_{\lambda,\alpha_1-\beta_1,\alpha_2-\beta_2}.
  \end{aligned}
  \end{align}
%   This allows us to estimate  $|I_\lambda|$ with the aid of
%   Proposition~\ref{prop:OscIntEstimatesIII}. In fact, we use that $J_\lambda\in \mathcal
%   J_{\lambda,\gamma_1,\gamma_2}$ with $\gamma_1,\gamma_2>-1$ and $\min\{\gamma_1,\gamma_2\}\leq 0$ implies 
%   $$
%     |J_\lambda| \les \rho_1^{\gamma_1+1} \rho_2^{\gamma_1+1} 
%   $$
     
  \medskip
  
  \textbf{1st case  $|\lambda B_1|,|\lambda B_2|\leq 1$:}\; We choose $M_1=M_2=0$. Then
  $I_\lambda=I_\lambda^1=J_\lambda^1$, so Proposition~\ref{prop:OscIntEstimatesI} and \eqref{eq:JClasses}
  give 
  $$
    |I_\lambda|
    \les |J_\lambda^1| 
    \les 1
    \les \rho_1^{\alpha_1+1}\rho_2^{\alpha_2+1}.
  $$
  
  \medskip
  
  \textbf{2nd case   $|\lambda B_1|\leq 1< |\lambda B_2|$:}\; We choose
  $M_1=0,M_2=\lceil\alpha_2\rceil$. Then  
  $I_\lambda=I_\lambda^1+I_\lambda^2$ with 
  \begin{align*}
    |I_\lambda^1|
    &\les |\lambda B_2|^{-M_2}|J_\lambda^1|
    \les |\lambda B_2|^{-M_2} \rho_1^{\alpha_1+1}\rho_2^{\alpha_2-M_2+1}  
    =  \rho_1^{\alpha_1+1}\rho_2^{\alpha_2+1}. 
    \intertext{Here we used \eqref{eq:JClasses}, Proposition~\ref{prop:OscIntEstimatesIII} and
    $\alpha_2-M_2\in (-1,0]$.
    On the other hand, Proposition~\ref{prop:OscIntEstimatesI} gives} 
    |I_\lambda^2|
    &\les  \sum_{\beta_2=0}^{M_2} 
      |\lambda B_2|^{-\beta_2} |\lambda|^{-M_2+\beta_2-1}
     |J_{\lambda,\beta_2}^2|   \\
    &\les  \sum_{\beta_2=0}^{M_2} 
      |\lambda B_2|^{-\beta_2} |\lambda|^{-M_2+\beta_2-1}
       | B_2|^{-\alpha_2+\beta_2-1}  \\ 
    &\les   \sum_{\beta_2=0}^{M_2}
      |\lambda B_2|^{-\alpha_2-1} |\lambda|^{-M_2+\alpha_2}  
    \;\les\;  \rho_2^{\alpha_2+1}  
    \;\les\;  \rho_1^{\alpha_1+1}  \rho_2^{\alpha_2+1} 
  \end{align*}
  In the last estimate we used $-M_2+\alpha_2\leq 0$.
  
    \medskip
  
  \textbf{3rd case  $|\lambda B_2|\leq 1< |\lambda B_1|$:}\; This is analogous. 
  
  \medskip
  
  \textbf{4th case $|\lambda B_1|,|\lambda B_2|> 1$:}\; We  choose 
  $M_1=\lceil\alpha_1\rceil,M_2=\lceil\alpha_2\rceil$ and obtain from
  Proposition~\ref{prop:OscIntEstimatesIII}  
  $$
    |I_\lambda^1|
    \les |\lambda B_1|^{-M_1} |\lambda B_2|^{-M_2}|J_\lambda^1| 
    \les |\lambda B_1|^{-M_1} |\lambda B_2|^{-M_2} \rho_1^{-\alpha_1+M_1-1} \rho_2^{-\alpha_2+M_1-1}
    %|\lambda B_2|^{-\alpha_2+M_2-1}  
    = \rho_1^{\alpha_1+1} \rho_2^{\alpha_2+1}.
  $$
   The integral $I_\lambda^2$ is estimated using Proposition~\ref{prop:OscIntEstimatesIII} and
    $-1<\alpha_1-M_1\leq 0$:
  \begin{align*}
    |I_\lambda^2|
    &\les  |\lambda B_1|^{-M_1} \sum_{\beta_2=0}^{M_2}
      |\lambda B_2|^{-\beta_2} |\lambda|^{-M_2+\beta_2-1}
     |J_{\lambda,\beta_2}^2|   \\
    &\les |\lambda B_1|^{-M_1} \sum_{\beta_2=0}^{M_2} 
      |\lambda B_2|^{-\beta_2} |\lambda|^{-M_2+\beta_2-1}
     |\lambda B_1|^{-\alpha_1+M_1-1} | B_2|^{-\alpha_2+\beta_2-1} \\
    &\les   |\lambda B_1|^{-\alpha_1-1} \sum_{\beta_2=0}^{M_2}
      |\lambda B_2|^{-\alpha_2-1} |\lambda|^{-M_2+\alpha_2} \\
    &\les |\lambda B_1|^{-\alpha_1-1} |\lambda B_2|^{-\alpha_2-1}
    \;=\; \rho_1^{\alpha_1+1} \rho_2^{\alpha_2+1}.
   \intertext{
   Similarly one may estimates $I^3_\lambda$ thanks to
   $-M_2+\alpha_2\leq 0$. 
   Finally, Proposition~\ref{prop:OscIntEstimatesI} gives}
    |I_\lambda^4|
    &\les  \sum_{\beta_1=0}^{M_1} \sum_{\beta_2=0}^{M_2}
      |\lambda B_1|^{-\beta_1} |\lambda B_2|^{-\beta_2}
       |\lambda|^{-M_1-M_2+\beta_1+\beta_2-2}
     |J_{\lambda,\beta_1,\beta_2}^4|   \\  
      &\les  \sum_{\beta_1=0}^{M_1} \sum_{\beta_2=0}^{M_2}
      |\lambda B_1|^{-\beta_1} |\lambda B_2|^{-\beta_2}
       |\lambda|^{-M_1-M_2+\beta_1+\beta_2-2}
       | B_1|^{-\alpha_1+\beta_1-1}
       | B_2|^{-\alpha_2+\beta_2-1}  \\
    &\les  \sum_{\beta_1=0}^{M_1} \sum_{\beta_2=0}^{M_2}
      |\lambda B_1|^{-\alpha_1-1} |\lambda B_2|^{-\alpha_2-1}
       |\lambda|^{-M_1-M_2+\alpha_1+\alpha_2}  \\
      &\les |\lambda B_1|^{-\alpha_1-1} |\lambda B_2|^{-\alpha_2-1}
      \;=\; \rho_1^{\alpha_1+1} \rho_2^{\alpha_2+1}.       
  \end{align*}
  So $|I_\lambda|\leq |I_\lambda^1|+\ldots+|I_\lambda^4|$
  gives the claim.
\end{proof}

\medskip

\begin{rem}
  ~ 
  \begin{itemize}
    \item[(a)] In fact stronger bounds can be proved. From the inequalities $\sqrt{A-|B_1|-|B_2|}\leq
    \Psi(s)\leq \sqrt{A+|B_1|+|B_2|}$ and $\supp(\chi)\subset [\frac{1}{2},2]$ we deduce $I_\lambda=0$ whenever 
     $A+|B_1|+|B_2|\leq \frac{1}{4}$ or $A-|B_1|-|B_2|\geq 4$.  
     However, these conditions do 
     not scale nicely in our application, so we omit them.
    \item[(b)] The analysis in this section extends to more general phase functions $\Psi$  satisfying,
    instead of~\eqref{eq:properties_Psi}, $\partial_j \Psi(s)= B_j \varphi_j(\Psi(s))$ where $\varphi_j$ is
    any smooth function on $[\frac{1}{2},2]$. 
    %This still ensures~\eqref{eq:PsiIntByParts} with a
    %slighlty different and possibly $i$-dependent smooth bump function $\tilde\eta$, but this is not a
    %problem given that such redefinitions only occur a finite number of times during the proof.
  \end{itemize}
\end{rem}

\bibliographystyle{abbrv}
\bibliography{biblio} 

\begin{thebibliography}{10}

\bibitem{Agmon1975}
S.~Agmon.
\newblock Spectral properties of {S}chr\"{o}dinger operators and scattering
  theory.
\newblock {\em Ann. Scuola Norm. Sup. Pisa Cl. Sci. (4)}, 2(2):151--218, 1975.

\bibitem{BenedekPanzone1961}
A.~Benedek and R.~Panzone.
\newblock The space {$L^{p}$}, with mixed norm.
\newblock {\em Duke Math. J.}, 28:301--324, 1961.

\bibitem{BerghLoefstroem1976}
J.~Bergh and J.~L\"{o}fstr\"{o}m.
\newblock {\em Interpolation spaces. {A}n introduction}.
\newblock Grundlehren der Mathematischen Wissenschaften, No. 223.
  Springer-Verlag, Berlin-New York, 1976.

\bibitem{CarberySeegerETC1999}
A.~Carbery, A.~Seeger, S.~Wainger, and J.~Wright.
\newblock Classes of singular integral operators along variable lines.
\newblock {\em J. Geom. Anal.}, 9(4):583--605, 1999.

\bibitem{ChenSun2022}
T.~Chen and W.~Sun.
\newblock Hardy-{L}ittlewood-{S}obolev inequality on mixed-norm {L}ebesgue
  spaces.
\newblock {\em J. Geom. Anal.}, 32(3):Paper No. 101, 43, 2022.

\bibitem{Cwikel1974}
M.~Cwikel.
\newblock On {$(L^{p_0}(A_0),\,L^{p_1}(A_1))_{\theta,q}$}.
\newblock {\em Proc. Amer. Math. Soc.}, 44:286--292, 1974.

\bibitem{EveqWeth2015}
G.~Evequoz and T.~Weth.
\newblock Dual variational methods and nonvanishing for the nonlinear
  {H}elmholtz equation.
\newblock {\em Adv. Math.}, 280:690--728, 2015.

\bibitem{GreySinnamon2016}
W.~Grey and G.~Sinnamon.
\newblock Product operators on mixed norm spaces.
\newblock {\em Linear Nonlinear Anal.}, 2(2):189--197, 2016.

\bibitem{Gutierrez2004}
S.~Guti\'{e}rrez.
\newblock Non trivial {$L^q$} solutions to the {G}inzburg-{L}andau equation.
\newblock {\em Math. Ann.}, 328(1-2):1--25, 2004.

\bibitem{KenigRuizSogge1987}
C.~E. Kenig, A.~Ruiz, and C.~D. Sogge.
\newblock Uniform {S}obolev inequalities and unique continuation for second
  order constant coefficient differential operators.
\newblock {\em Duke Math. J.}, 55(2):329--347, 1987.

\bibitem{KhaiTri2014}
D.~Q. Khai and N.~M. Tri.
\newblock Solutions in mixed-norm {S}obolev-{L}orentz spaces to the initial
  value problem for the {N}avier-{S}tokes equations.
\newblock {\em J. Math. Anal. Appl.}, 417(2):819--833, 2014.

\bibitem{Mandel2020}
R.~Mandel.
\newblock Dispersive estimates, blow-up and failure of {S}trichartz estimates
  for the {S}chr\"{o}dinger equation with slowly decaying initial data.
\newblock {\em Pure Appl. Anal.}, 2(2):519--532, 2020.

\bibitem{Mandel2023}
R.~Mandel.
\newblock Real {I}nterpolation for mixed {L}orentz spaces and {M}inkowski's
  inequality.
\newblock {\em Z. Anal. Anwend.}, 42(3-4):457--469, 2023.

\bibitem{Mandel2022}
R.~Mandel.
\newblock On {G}agliardo-{N}irenberg {I}nequalities with vanishing symbols.
\newblock {\em Analysis {$\&$} PDE}, 17(10):3447--3476, 2024.

\bibitem{ManMonPel2017}
R.~Mandel, E.~Montefusco, and B.~Pellacci.
\newblock Oscillating solutions for nonlinear {H}elmholtz equations.
\newblock {\em Z. Angew. Math. Phys.}, 68(6):Paper No. 121, 19, 2017.

\bibitem{ManDOS}
R.~Mandel and D.~Oliveira {E}~Silva.
\newblock The {S}tein-{T}omas inequality under the effect of symmetries.
\newblock {\em Journal d'Analyse Math\'{e}matique}, 150:547--582, 2023.

\bibitem{Mueller1998}
C.~M\"{u}ller.
\newblock {\em Analysis of spherical symmetries in {E}uclidean spaces}, volume
  129 of {\em Applied Mathematical Sciences}.
\newblock Springer-Verlag, New York, 1998.

\bibitem{Stein1993}
E.~M. Stein.
\newblock {\em Harmonic analysis: real-variable methods, orthogonality, and
  oscillatory integrals}, volume~43 of {\em Princeton Mathematical Series}.
\newblock Princeton University Press, Princeton, NJ, 1993.

\bibitem{WethYesil2021}
T.~Weth and T.~Ye\c{s}il.
\newblock Fourier extension estimates for symmetric functions and applications
  to nonlinear {H}elmholtz equations.
\newblock {\em Ann. Mat. Pura Appl. (4)}, 200(6):2423--2454, 2021.

\end{thebibliography}
\end{document}